\theoremstyle{plain}
\newtheorem{Th}{Theorem}[section]
\newtheorem{Lemma}[Th]{Lemma}
\newtheorem{Cor}[Th]{Corollary}
\theoremstyle{definition}
\newtheorem{Def}[Th]{Definition}
\newtheorem{Rem}[Th]{Remark}
\newtheorem{?}[Th]{Problem}
\newtheorem{Ex}[Th]{Example}
\newcommand{\Addresses}{{
  \bigskip
  \footnotesize

  \noindent Zheng Xiao, \textsc{Beijing International Center for Mathematical Research, Peking University,
    Beijing, China 100871}\\
    \noindent \textit{E-mail address}: \texttt{xiaozheng@bicmr.pku.edu.cn}

}}
\title[GREATEST COMMON DIVISORS IN ALGEBRAIC NUMBERS AND LINEAR RECURRENCE]{GREATEST COMMON DIVISORS FOR POLYNOMIALS IN ALMOST UNITS AND APPLICATIONS TO LINEAR RECURRENCE SEQUENCES}
\author{ZHENG XIAO}
\begin{document}

\maketitle


\begin{abstract} We bound the greatest common divisor of two coprime multivariable polynomials evaluated at algebraic numbers, generalizing work of Levin, and going towards conjectured inequalities of Silverman and Vojta. As an application, we prove results on greatest common divisors of terms from two linear recurrence sequences, extending the results of Levin, who considered the case where the linear recurrences are simple, and improving recent results of Grieve and Wang. The proofs rely on Schmidt's Subspace Theorem.
\end{abstract}


\section{Introduction}
\subsection{Main results}
In recent work of Levin \cite{Le}, the following result was proven, giving an inequality for greatest common divisors of polynomials evaluated at $S$-unit points.
\begin{Th}[Levin \cite{Le}]\label{gcdle}
Let $n$ be a positive integer. Let $\Gamma \subset \mathbb{G}_{m}^{n}(\bar{\mathbb{Q}})$ be a finitely generated group. Let $f(x_{1},\ldots,x_{n})$, $g(x_{1},\ldots,x_{n}) \in \bar{\mathbb{Q}}[x_{1},\ldots,x_{n}]$ be non-constant coprime polynomials such that not both of them vanish at $(0,\ldots,0)$. Let $h(\alpha)$ denote the (absolute logarithmic) height of an algebraic number $\alpha$. For all $\epsilon >0$, there exists a finite union $Z$ of translates of proper algebraic subgroups of $\mathbb{G}_{m}^{n}$ such that
$$\log \gcd (f(u_{1},\ldots,u_{n}),g(u_{1},\ldots,u_{n}))<\epsilon \max\{h(u_{1}),\ldots,h(u_{n})\}$$
for all $(u_{1},\ldots,u_{n}) \in \Gamma \setminus Z$.\\
\end{Th}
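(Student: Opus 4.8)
The plan is to deduce this from Schmidt's Subspace Theorem, using the standard translation of greatest common divisors into proximity functions for the common zero locus $W=V(f,g)$.

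\emph{Reductions and set-up.} A finitely generated $\Gamma\subset\mathbb{G}_m^{n}(\bar{\mathbb{Q}})$ lies in $(\mathcal{O}_{K,S}^{*})^{n}$ for a number field $K$ and a finite set of places $S\supseteq M_K^{\infty}$ chosen so that the coordinates of a generating set are $S$-units; enlarging $K$ and $S$ we may also take $f,g\in\mathcal{O}_{K,S}[x_1,\dots,x_n]$. By Laurent's theorem $\overline{\Gamma\cap V(f)}^{\mathrm{Zar}}$ and $\overline{\Gamma\cap V(g)}^{\mathrm{Zar}}$ are finite unions of translates of algebraic subgroups, necessarily proper, so after putting them into $Z$ we may assume $f(u)g(u)\neq 0$. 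Writing $\log\gcd(a,b)=\sum_{v}\frac{[K_v:\mathbb{Q}_v]}{[K:\mathbb{Q}]}\min\{v^{-}(a),v^{-}(b)\}$ with $v^{-}(x)=\max\{0,-\log|x|_v\}$, the point of using $S$-units is that for $u\in\Gamma$ and $v\notin S$ every monomial value $u^{\mathbf i}$ is a $v$-unit while $f(u),g(u)$ are $v$-integral, so $\ell_v:=\min\{v^{-}(f(u)),v^{-}(g(u))\}$ records the $v$-adic distance of $u$ to $W$, which has codimension $\geq 2$ because $f,g$ are coprime; the hypothesis that $f,g$ do not both vanish at the origin is what keeps the contribution of $v\in S$ (where $u$ may be $v$-adically close to the origin of $\mathbb{A}^{n}$) under control. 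The elementary fact that drives everything: if $P\in\mathcal{O}_{K,S}[x]$ lies in $(f,g)^{t}$, or more generally vanishes to order $\geq t$ along a component of $W$, then $v^{-}(P(u))\geq t\,\ell_v$ for all but finitely many $v\notin S$.

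\emph{The Subspace Theorem step.} Fix a large integer $D$, let $V_D$ be the space of polynomials of degree $\leq D$, and consider its filtration $V_D\supseteq V_D\cap(f,g)\supseteq V_D\cap(f,g)^{2}\supseteq\cdots$. Coprimality forces $R/(f,g)^{t}$ to have Hilbert polynomial of degree $\leq n-2$, so $\operatorname{codim}_{V_D}(V_D\cap(f,g)^{t})$ is negligible compared with $\dim V_D$ once $D$ is large relative to $t$; equivalently, a basis of $V_D$ adapted to this flag has large total vanishing order. (Since the optimal flag at a place $v$ depends on the point of $W$ to which $u$ reduces mod $v$, one works with the finitely many components of $W$ and their symbolic powers, obtaining only finitely many relevant adapted bases.) Now use the monomial embedding $u\mapsto\mathbf{x}(u)=(u^{\mathbf i})_{|\mathbf i|\leq D}\in\mathbb{P}^{N}(K)$; its coordinates are $S$-units, so $\|\mathbf{x}(u)\|_v=1$ for $v\notin S$ and $h(\mathbf{x}(u))\ll_{n,D}\max_j h(u_j)$. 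Applying Schmidt's Subspace Theorem (the Evertse--Schlickewei form, with linear forms over $\bar{\mathbb{Q}}$) with the adapted filtered basis at each place, and using the product formula on these basis vectors to convert the $v$-adic smallness of the $P(u)$ with $P\in(f,g)^{t}$ into the Subspace inequality, one gets, outside a finite union $\mathcal{H}$ of hyperplanes of $\mathbb{P}^{N}$, a bound of the shape
\[
\Bigl(\sum_{t}\dim\bigl(V_D\cap(f,g)^{t}\bigr)\Bigr)\cdot\log\gcd\bigl(f(u),g(u)\bigr)\ \leq\ (N+1+\epsilon)\,h(\mathbf{x}(u))+O(1).
\]
The codimension-$\geq 2$ saving makes the left-hand weight large enough, relative to $N+1$ and to $h(\mathbf{x}(u))/\max_j h(u_j)$, that optimizing over $D$ — and, if needed, first replacing $(f,g)$ by $(f^{k},g^{k})$, which stays coprime and not-both-zero-at-$0$ and satisfies $\log\gcd(f^{k}(u),g^{k}(u))=k\log\gcd(f(u),g(u))$ — forces the coefficient of $\max_j h(u_j)$ below $\epsilon$.

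\emph{Conclusion, and the main difficulty.} The pullback of $\mathcal{H}$ is a finite set of proper hypersurfaces of $\mathbb{G}_m^{n}$; by Laurent's theorem each meets $\Gamma$ in a finite union of translates of proper algebraic subgroups, and these, together with the finitely many (by Northcott) points of $\Gamma$ of bounded height — each a translate of the trivial subgroup — and the pieces already in $Z$, give the required exceptional set. The step I expect to be the main obstacle is the Subspace Theorem estimate itself: arranging the auxiliary linear forms and their weights so that the codimension-$\geq 2$ gain genuinely improves on the trivial exponent $N+1$, and handling the dependence of the optimal local data at $v$ on the reduction of $u$ mod $v$. Conceptually the cleanest implementation passes to the blow-up of $\mathbb{G}_m^{n}$ along $W$, identifies $\log\gcd(f(u),g(u))$ with the proximity function of the exceptional divisor $E$ — whose Seshadri/$\beta$-type invariant relative to an ample class is small precisely because $W$ has codimension $\geq 2$ — and invokes a general-position form of the Subspace Theorem there; but the Hilbert-function estimates and the appeal to Laurent's theorem remain the technical core either way.
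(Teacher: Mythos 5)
Your reductions (embedding $\Gamma$ in $(\mathcal{O}_{K,S}^{*})^{n}$, discarding $\Gamma\cap V(fg)$ via Laurent's theorem, and converting the hyperplanes produced by the Subspace Theorem into torus cosets at the end) are sound and match the framework the paper uses. The gap is in the central Subspace Theorem step, and it is not a matter of bookkeeping: the architecture is inverted. You apply the theorem to the monomial point $\mathbf{x}(u)$ and take as linear forms a basis of $V_D$ adapted to the flag $V_D\supseteq V_D\cap(f,g)\supseteq V_D\cap(f,g)^2\supseteq\cdots$. But for an $S$-unit point one has $|\mathbf{x}(u)|_v=1$ for $v\notin S$, so by the product formula a form $L_j\in(f,g)^{t_j}$, being $v$-adically \emph{small} outside $S$, is forced to be \emph{large} inside $S$: $\sum_{v\in S}\log|L_j(\mathbf{x}(u))|_v\geq t_j\log\gcd+O(1)$. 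Hence
$\sum_{v\in S}\sum_j\log\bigl(|\mathbf{x}(u)|_v/|L_j(\mathbf{x}(u))|_v\bigr)\leq (N+1)h(\mathbf{x}(u))-\bigl(\sum_j t_j\bigr)\log\gcd$, which is \emph{below} the trivial bound $(N+1)h(\mathbf{x}(u))$; the Subspace Theorem's upper bound of $(N+1+\epsilon)h(\mathbf{x}(u))$ is therefore automatically satisfied and yields no information. What your displayed inequality actually encodes is the product-formula identity above (no Subspace Theorem needed), and it is quantitatively too weak: a polynomial of degree $D$ can vanish along a codimension-two subvariety of degree $e$ only to order $O(D/\sqrt{e})$, so $\sum_t\dim(V_D\cap(f,g)^t)\leq C(n,\deg f,\deg g)\,D\dim V_D$ with $C$ bounded, and the method stalls at $\log\gcd\leq C'\max_j h(u_j)$ for a fixed constant $C'$ (in the test case $f=x-1$, $g=y-1$ it gives roughly $3\max\{h(u),h(v)\}$, worse than the trivial estimate). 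Your fallback of replacing $(f,g)$ by $(f^k,g^k)$ does not rescue this: the gcd and the degrees both scale linearly in $k$, so $k$ cancels in the optimization.

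The mechanism that actually produces the $\epsilon$ (Levin's, reproduced in the paper's proof of Theorem \ref{notinsgen}) is dual to yours. One applies the Subspace Theorem to the point $P=\phi(\mathbf{u})=(\phi_1(\mathbf{u}),\ldots,\phi_N(\mathbf{u}))$ where $\phi_1,\ldots,\phi_N$ is a basis of $(f,g)_{(m)}=fV_{m-\deg f}+gV_{m-\deg g}$ (only the first power of the ideal). Since \emph{every} coordinate of $P$ lies in $(f,g)$, one gets $\sum_{v\in S}\log|P|_v\geq h(P)+\log\gcd(f(\mathbf{u}),g(\mathbf{u}))+O(1)$, so the gcd enters through the numerators with total weight $N\sim m^n/n!$. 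The linear forms at each $v\in S$ are built from a greedy monomial basis of the quotient $k[x]_m/(f,g)_{(m)}$, chosen so that $|L^v_{\mathbf{i}}(P)|_v\ll|\mathbf{u}^{\mathbf{i}}|_v$; the total cost of the denominators is then controlled by the product formula together with the Corvaja--Levin--Zannier lemma (Lemma \ref{CLZlemma} here), which is exactly where coprimality/codimension $\geq 2$ enters, and is only $O\bigl(\deg f\deg g\,\binom{m+n-2}{n-1}\bigr)\sum_i h(u_i)=O(m^{n-1})$. Dividing by $N$ makes the coefficient of $\max_j h(u_j)$ of size $O(1/m)$, whence the $\epsilon$. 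So the correct proof needs a genuinely different pairing of point and forms than the one you propose, not merely a sharper Hilbert-function estimate.
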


In particular, $\Gamma$ in Theorem \ref{gcdle} can be taken as the full set of $n$-tuples of $S$-units in a number field $k$, where $S$ is a finite set places of $k$ containing the archimedean places. In the above statement, $\log \gcd$ is the generalized logarithmic greatest common divisor, which is defined in Section \ref{pregcd}.\\

We want to generalize Theorem \ref{gcdle} beyond the setting of $S$-units points. To achieve this goal, we introduce the definition of almost $S$-units: Roughly speaking, an almost $(S,\delta)$-unit for some set of places $S$ in a number field $k$ is an element $u \in k$ whose dominant part of its height is due to an $S$-unit. 

\begin{Def} \label{asu}
For a fixed $\delta > 0$ and a fixed set of places $S$, if $u \in k^{*}$, then we say $u$ is an almost $(S,\delta)$-unit if 
$$h_{\bar S}(u):=\displaystyle \sum_{v \notin S}\lambda_{v}(u)+\lambda_{v}\left(\dfrac{1}{u}\right) \leq \delta h(u)$$
(see Section 2.1 for the definition of $\lambda_{v}$). We denote the set of all almost $(S,\delta)$-units by $k_{S,\delta}$. More generally, let 
$$\mathbb{G}_m^n(k)_{S,\delta}:=\{\textbf{u} \in \mathbb{G}_{m}^{n}(k)|h_{\bar{S}}(\textbf{u})\leq \delta h(\textbf{u})\},$$ 
where $$h_{\bar S}(\textbf{u})=\displaystyle \sum_{v \notin S}\lambda_{v}(\textbf{u})+\lambda_{v}\left(\dfrac{1}{\textbf{u}}\right).$$ 
\end{Def}

With Definition \ref{asu}, we prove the following generalization of Theorem \ref{gcdle}, which shows that $\Gamma=(\mathcal{O}_{k,S}^{*})^{n}$ may be ``thickened" to $\mathbb{G}_{m}^{n}(k)_{S,\delta}$ for some positive $\delta$ (depending on $\epsilon$). 
\begin{Th}\label{combined}(Corollary \ref{ins})
Let $n$ be a positive integer and $k$ a number field, $f(x_{1},\ldots,x_{n})$, $g(x_{1},\ldots, x_{n}) \in k[x_{1},\ldots,x_{n}]$ be nonconstant coprime polynomials such that not both of them vanish at $(0,\ldots,0)$. For all $\epsilon >0$, there exists $\delta >0$ and a proper Zariski closed subset $Z$ of $\mathbb{G}_{m}^{n}$ such that:\\
\centerline{$\log \gcd(f(u_{1},\ldots,u_{n}),g(u_{1},\ldots,u_{n})) < \epsilon \max\{h(u_{1}),\ldots,h(u_{n})\}$}\\
for all $(u_{1},\ldots,u_{n}) \in \mathbb{G}_m^n(k)_{S,\delta} \setminus Z$.
\end{Th}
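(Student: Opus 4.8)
The plan is to reduce the almost-$S$-unit statement to the genuine $S$-unit statement of Theorem \ref{gcdle} by absorbing the ``non-$S$-unit part'' of each coordinate into an enlarged finite set of places. Given $\mathbf{u} = (u_1,\ldots,u_n) \in \mathbb{G}_m^n(k)_{S,\delta}$, the hypothesis $h_{\bar S}(\mathbf{u}) \le \delta h(\mathbf{u})$ says that, outside $S$, the combined contributions $\sum_{v \notin S}\big(\lambda_v(u_i) + \lambda_v(1/u_i)\big)$ are small compared with the total height. For such a point one forms the enlarged set $S_{\mathbf u} = S \cup \{ v \notin S : \lambda_v(u_i) \neq 0 \text{ or } \lambda_v(1/u_i) \neq 0 \text{ for some } i\}$; then $\mathbf u$ is automatically an $S_{\mathbf u}$-unit point. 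The difficulty is that $S_{\mathbf u}$ varies with $\mathbf u$ and can be arbitrarily large, so one cannot simply quote Theorem \ref{gcdle} for a fixed $S$. The standard device here is a ``moving'' or ``large'' set of places argument: apply the Subspace Theorem (or the $S$-unit machinery underlying Theorem \ref{gcdle}) in a form where the set of places is allowed to grow with the point, provided one only loses an amount proportional to $\sum_{v \notin S}\lambda_v(\cdot)$ in the final estimate. That lost amount is precisely controlled by $h_{\bar S}(\mathbf u) \le \delta h(\mathbf u)$.

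Concretely, I would proceed as follows. First, recall from Section \ref{pregcd} that $\log\gcd(f(\mathbf u), g(\mathbf u)) = \sum_v \min\{\lambda_v(f(\mathbf u)), \lambda_v(g(\mathbf u))\}$ (up to the usual normalizations), and split this sum as $\sum_{v \in S} + \sum_{v \notin S}$. For the tail sum over $v \notin S$, use the trivial bound $\min\{\lambda_v(f(\mathbf u)), \lambda_v(g(\mathbf u))\} \le \lambda_v(f(\mathbf u))$ together with the fact that $f$ has bounded degree and integer-ish coefficients, so that $\lambda_v(f(\mathbf u)) \le \sum_i (\deg f)\,\lambda_v^+(u_i) + O(1)$ at finite $v \notin S$; summing over $v \notin S$ gives a bound of the shape $C(f)\, h_{\bar S}(\mathbf u) + O(1) \le C(f)\,\delta\, h(\mathbf u) + O(1)$, which is $\le \frac{\epsilon}{2}\max_i h(u_i)$ once $\delta$ is small enough (using $h(\mathbf u) \asymp \max_i h(u_i)$ on the relevant part of $\mathbb{G}_m^n$, i.e. away from a closed set where some coordinate is bounded). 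This handles the easy half and is where $\delta$ gets chosen in terms of $\epsilon$.

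For the main sum $\sum_{v \in S} \min\{\lambda_v(f(\mathbf u)), \lambda_v(g(\mathbf u))\}$ — plus whatever finite set of extra places we are forced to track — the strategy is to run the proof of Theorem \ref{gcdle} itself rather than cite it as a black box, because that proof (via Schmidt's Subspace Theorem applied to carefully chosen linear forms built from monomials in $f$ and $g$) is robust to enlarging $S$: the Subspace Theorem inequality $\sum_{v \in T}\sum_i \log\|L_i\|_v^{-1} < (N + \epsilon) h$ holds with the \emph{same} $\epsilon$ for \emph{every} finite $T \supseteq S$, as long as the exceptional subspaces are collected into a fixed finite union independent of $T$. One then estimates the $\gcd$ contribution at each $v$ (whether in $S$ or among the extra bad places) by the Subspace-Theorem bound, picks up an error of size $O_\epsilon(h_{\bar S}(\mathbf u)) = O_\epsilon(\delta h(\mathbf u))$ from the places outside $S$, and concludes $\log\gcd(f(\mathbf u),g(\mathbf u)) < \frac{\epsilon}{2}\max_i h(u_i) + \frac{\epsilon}{2}\max_i h(u_i)$ for all $\mathbf u$ outside the (fixed, $\mathbf u$-independent) union $Z$ of exceptional subvarieties together with the closed set where coordinate heights degenerate.

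The main obstacle I anticipate is exactly the uniformity over the growing set of places: making precise that the exceptional set $Z$ coming out of the Subspace Theorem can be chosen independently of $S_{\mathbf u}$, and that the error terms accumulated over the extra places $v \in S_{\mathbf u} \setminus S$ are genuinely controlled by $h_{\bar S}(\mathbf u)$ and not by something larger. This is precisely the point where the ``almost $S$-unit'' definition was engineered to be the right one — the quantity $h_{\bar S}(\mathbf u)$ is designed to be exactly the total error one pays — so I expect the resolution to be a careful bookkeeping of the local estimates in Levin's argument rather than a new idea, but it is the technical heart of the theorem.
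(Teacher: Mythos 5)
There is a genuine gap, and it starts at the very first step: your formula for the local contribution to the gcd has the wrong sign. From Section \ref{pregcd}, $\log\gcd(a,b)=-\sum_v\log^-\max\{|a|_v,|b|_v\}=\sum_v\min\{\lambda_v(1/a),\lambda_v(1/b)\}$; it measures where $f(\mathbf u)$ and $g(\mathbf u)$ are \emph{simultaneously small} $v$-adically, not where they are large, so it is $\min\{\lambda_v(1/f(\mathbf u)),\lambda_v(1/g(\mathbf u))\}$ rather than $\min\{\lambda_v(f(\mathbf u)),\lambda_v(g(\mathbf u))\}$. Because of this, your ``trivial bound'' for the tail $\sum_{v\notin S}$ bounds the wrong quantity. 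There is no estimate of the shape $\lambda_v(1/f(\mathbf u))\le (\deg f)\sum_i\lambda_v^+(u_i)+O(1)$: take $f=x_1-1$ and $u_1$ an exact $S$-unit congruent to $1$ modulo a huge power of a prime outside $S$; then the left side is enormous while $h_{\bar S}(\mathbf u)=0$. Indeed, the sum over $v\notin S$ is precisely the \emph{main} term --- it is nonzero and hard to control even for genuine $S$-units, which is the whole content of Levin's Theorem \ref{gcdle} --- so it cannot be disposed of by $C(f)\,\delta\,h(\mathbf u)$. You have the decomposition exactly backwards relative to the paper: Theorem \ref{notinsgen} spends the full Subspace-Theorem machinery on $-\sum_{v\in M_k\setminus S}\log^-\max\{|f(\mathbf u)|_v,|g(\mathbf u)|_v\}$, while the comparatively tamer piece is $-\sum_{v\in S}\log^-$, handled by a separate Subspace-Theorem argument (Theorem \ref{insgen}, via the $md$-uple embedding) which is where the hypothesis that $f,g$ do not both vanish at the origin enters --- a hypothesis your sketch never uses.

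Your proposed mechanism for the remaining part, a ``moving set of places'' $S_{\mathbf u}$ with a uniform exceptional set, is also not how the paper resolves the difficulty, and the uniformity you would need is exactly the unproved technical heart you flag. The paper sidesteps it entirely: $S$ stays fixed throughout, Levin's argument is rerun verbatim with the fixed $S$, and the product formula is used to rewrite $-\sum_{v\in S}\sum_{\mathbf i}\log|\mathbf u^{\mathbf i}|_v$ as a sum over $v\notin S$; the resulting extra terms are each explicitly bounded by multiples of $\sum_{v\notin S}\lambda_v(u_i)$ and $\sum_{v\notin S}\lambda_v(1/u_i)$, i.e.\ by $h_{\bar S}(\mathbf u)\le\delta h(\mathbf u)$, and one then optimizes the auxiliary degree $m$ against $\delta$ (yielding the $\delta^{1/2}$ in Theorem \ref{gcdcomb}). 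So the correct intuition that ``$h_{\bar S}$ is designed to be exactly the error one pays'' is realized through the product formula inside a fixed-$S$ application of the Subspace Theorem, not through enlarging the set of places. As written, the proposal does not yield a proof.
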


By Theorem 5 of \cite{Ev}, we may further choose $Z$ so that it is a (possibly infinite) union of positive-dimensional torus cosets.

In fact, we prove the following refinement of Theorem \ref{combined}. Theorem \ref{gcdcomb} extends Levin's Theorem \ref{gcdle} from integral points to rational points, and may be viewed as progress towards Vojta's conjecture for certain blown-up varieties. More precisely, in Remark \ref{Si} we discuss the relation of Theorem \ref{gcdcomb} with conjectured inequalities of Silverman based on Vojta's conjecture.\\

\begin{Th}\label{gcdgen}(Theorem \ref{gcdcomb})
Let $k$ be a number field and let $S$ be a finite set of places of $k$ containing the archimedean places. Let $f$, $g \in k[x_{1}, \ldots, x_{n}]$ be coprime polynomials that don't both vanish at the origin $(0,\ldots,0)$. For all $0< \delta <1$, there exists a proper Zariski closed subset $Z$ of $\mathbb{G}_{m}^{n}$ such that
$$\displaystyle \log \gcd(f(u_{1},\ldots, u_{n}), g(u_{1},\ldots,u_{n}))< C \delta^{1/2}\sum_{i=1}^{n}h(u_{i})$$
for all $\textbf{u}=(u_{1},\ldots,u_{n})\in \mathbb{G}_{m}^{n}(k)_{S,\delta} \setminus Z$ satisfying $h_{\bar{S}}(\textbf{u}) < \delta h(\textbf{u})$, where $C=6(\deg f+\deg g)n^{2}$ is a constant.\\
\end{Th}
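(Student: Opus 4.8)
The plan is to reduce the statement to an application of Schmidt's Subspace Theorem, following the strategy Levin used for Theorem \ref{gcdle} but carefully tracking the contribution of places outside $S$. First I would recall the definition of $\log\gcd$ from Section \ref{pregcd}: writing $\mathbf{u}=(u_1,\dots,u_n)$, the quantity $\log\gcd(f(\mathbf{u}),g(\mathbf{u}))$ is a sum over all places $v$ of the local contributions $\min\{v(f(\mathbf{u})),v(g(\mathbf{u}))\}$ suitably normalized, measuring how much $f(\mathbf{u})$ and $g(\mathbf{u})$ are simultaneously divisible at $v$. Since $f$ and $g$ are coprime, the set where both are small is cut out by a subscheme of codimension $\ge 2$ in $\mathbb{A}^n$ (or, after clearing, a proper subscheme of $\mathbb{G}_m^n$), which is exactly what makes a Subspace Theorem argument applicable: one builds, at each place $v$ where the gcd contribution is positive, a system of linearly independent linear forms (in a suitable projective embedding via monomials) that are all small at $v$.

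The key steps, in order: (1) Replace $f,g$ by homogenizations / clear denominators so that all quantities are heights of points on a projective space $\mathbb{P}^N$ obtained from the monomials appearing in $f,g$ up to some bounded degree $d\approx \deg f + \deg g$; the map $\mathbf{u}\mapsto (\text{monomials in } \mathbf{u})$ has height comparable to $\sum_i h(u_i)$ up to the factor $d$. (2) For each place $v$, analyze the local gcd term: using coprimality, a Nullstellensatz-type bound gives linear combinations of $f,g$ (with coefficients bounded in terms of $f,g$) equal to pure monomials, so whenever $v(f(\mathbf{u}))$ and $v(g(\mathbf{u}))$ are both large, certain coordinates of the monomial point must be $v$-adically small — this produces the required linearly independent small linear forms at $v$. (3) Here is where the almost-$S$-unit hypothesis enters: at places $v\notin S$, the condition $h_{\bar S}(\mathbf{u})<\delta h(\mathbf{u})$ means the total size of $u_i$ and $u_i^{-1}$ away from $S$ is at most $\delta h(\mathbf{u})$, so the contribution to the gcd from places outside $S$ is itself $O(\delta h(\mathbf{u}))$ — crudely, $\log\gcd$ restricted to $v\notin S$ is bounded by $\sum_{v\notin S}\min\{\lambda_v(f(\mathbf{u})),\lambda_v(g(\mathbf{u}))\}$, which one bounds by a constant times $d$ times $h_{\bar S}(\mathbf{u})$, hence by $C\delta\sum h(u_i)$. (4) For the places $v\in S$, apply Schmidt's Subspace Theorem (in the form for several places and systems of linear forms) to the monomial point: the smallness coming from the gcd contributions, combined with the Nullstellensatz bound, forces the point into a finite union of hyperplanes, which pulls back to a proper Zariski closed subset $Z$ of $\mathbb{G}_m^n$; outside $Z$ the $S$-part of $\log\gcd$ is at most $\epsilon'\sum h(u_i)$ for any prescribed $\epsilon'$.

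To get the explicit shape $C\delta^{1/2}\sum h(u_i)$ with $C=6(\deg f+\deg g)n^2$ rather than a qualitative $o(\sum h(u_i))$, I would run the Subspace Theorem argument with the $\epsilon$ of Schmidt chosen as a function of $\delta$ — the natural balancing is $\epsilon'\sim\delta^{1/2}$, so that the $S$-contribution $\epsilon'\sum h(u_i)$ and the out-of-$S$ contribution $\delta\sum h(u_i)/\epsilon'$ are of the same order $\delta^{1/2}\sum h(u_i)$ — and then bookkeep the numerical constants from the number of monomials (bounded by $n^2$-type binomial estimates in terms of $\deg f+\deg g$) and from the Nullstellensatz coefficient bounds, absorbing everything into the factor $6(\deg f+\deg g)n^2$. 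The main obstacle I anticipate is Step (2)/(4) in the non-simple generality: controlling the local gcd contribution uniformly over all places and producing genuinely linearly independent small linear forms at each place (not just one small form), since coprimality only gives a codimension-two condition and one must exploit it correctly through an effective Nullstellensatz to feed Schmidt's theorem — this is precisely the delicate geometric heart of Levin's method, and the new wrinkle here is doing the accounting so that the places outside $S$ contribute only the controlled $\delta$-term rather than spoiling the estimate.
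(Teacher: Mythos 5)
Your decomposition of the problem inverts the actual difficulty, and step (3) contains a claim that is false. You assert that the contribution to $\log\gcd$ from places $v\notin S$ is ``itself $O(\delta h(\mathbf{u}))$'' because $h_{\bar S}(\textbf{u})<\delta h(\textbf{u})$ controls the size of the coordinates away from $S$. But the gcd contribution at $v\notin S$ is $-\log^{-}\max\{|f(\textbf{u})|_{v},|g(\textbf{u})|_{v}\}$, which measures how \emph{small} (i.e.\ how divisible) both values are at $v$, and this is not controlled by $\lambda_{v}(u_{i})$ or $\lambda_{v}(1/u_{i})$ at all: even when the $u_{i}$ are exact $S$-units (so $h_{\bar S}(\textbf{u})=0$), the values $f(\textbf{u})$ and $g(\textbf{u})$ can be divisible by arbitrarily large powers of primes outside $S$ --- this is precisely the phenomenon in $\gcd(a^{n}-1,b^{n}-1)$ that Levin's theorem (and this one) is about. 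The only trivial bound available from $h_{\bar S}$ is an upper bound on how \emph{large} $|f(\textbf{u})|_{v}$ can be, which is the wrong direction. Consequently the part of the theorem you propose to dispatch in one line is the entire content of the result, and the part you propose to attack with the coprimality-plus-Subspace machinery (the places in $S$) is the comparatively easy part.

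For comparison, the paper splits $\log\gcd$ the same way but treats the two halves oppositely. The sum over $v\notin S$ is Theorem \ref{notinsgen}: one applies the Subspace Theorem with linear forms chosen only at the finitely many places of $S$ (built from monomial bases of $k[x_{1},\ldots,x_{n}]_{m}/(f,g)_{(m)}$ and a basis $\phi_{1},\ldots,\phi_{N}$ of $(f,g)_{(m)}$, with the combinatorial input of Lemma \ref{CLZlemma}), and the non-$S$ gcd term enters through the product formula and the estimate $\log|P|_{v}\leq \log^{-}\max\{|f(\textbf{u})|_{v},|g(\textbf{u})|_{v}\}+m\lambda_{v}(\textbf{u})+O_{v}(1)$ for $v\notin S$; the exponent $\delta^{1/2}$ then comes from optimizing the auxiliary degree $m\approx 2n\deg f/\delta^{1/2}$ against the $mn\delta$ loss caused by the non-unit part of the coordinates --- not from balancing an $S$-part against a non-$S$-part of the gcd as in your final paragraph. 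The sum over $v\in S$ is Theorem \ref{insgen}, which needs only one of $f,g$ not vanishing at the origin (not coprimality), uses the $md$-uple embedding and the homogenization $F$, and yields the stronger bound $4nd\delta\sum h(u_{i})$, linear in $\delta$. Your proposal also suggests choosing small linear forms ``at each place $v$ where the gcd contribution is positive,'' which cannot feed Schmidt's theorem directly since that set of places is infinite; the product-formula bookkeeping above is how the paper avoids this. As written, your argument does not prove the theorem.
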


This Theorem gives a G.C.D. inequality of the form similar to what Vojta's Conjecture predicts. Assuming Vojta's Conjecture, Silverman  obtained an upper bound for the polynomial G.C.D. in \cite{Si}. By properly extending the notions from $\mathbb{Q}$ to a number field, we can compare Silverman's conjectural upper bound with our inequality. This is discussed in detail in Remark \ref{Si}. We also note work of Grieve \cite{Gr} in this direction.\\

As an application, we state our main result on linear recurrence sequences: \\

\begin{Th}\label{recfin}
Let\\
$$\displaystyle F(m)=\sum_{i=1}^{s}p_{i}(m)\alpha_{i}^{m},$$
$$\displaystyle G(n)=\sum_{j=1}^{t}q_{j}(n)\beta_{j}^{n},$$
define two algebraic linear recurrence sequences. Let $k$ be a number field such that all coefficients of $p_{i}$ and $q_{j}$ and $\alpha_{i}$, $\beta_{j}$ are in $k$, for $i=1,\ldots,s$, $j=1,\ldots,t$. Let
$$S_{0}=\{v \in M_{k}:\max\{|\alpha_{1}|_{v},\ldots,|\alpha_{s}|_{v},|\beta_{1}|_{v},\ldots,|\beta_{t}|_{v}\} < 1\}.$$
Then all but finitely many solutions $(m,n)$ of the inequality:\\
$$\displaystyle \sum_{v \in M_{k} \setminus S_{0}}-\log^{-}\max\{|F(m)|_{v},|G(n)|_{v}\} > \epsilon \max\{m,n\}$$
are of the form:\\
\centerline{$(m,n)=(a_{i}t,b_{i}t)+(\mu_{1},\mu_{2})$,\ \ \ \  $\mu_{1},\mu_{2} \ll \log t,\ t \in \mathbb{N},\  i=1, \ldots, r$}\\
with finitely many choices of nonzero integers $(a_{i},b_{i})$ .\\
Moreover, if the roots of $F$ and $G$ are independent (see Def \ref{indroot}), then the solutions $(m,n)$ satisfy one of the finitely many linear relations:\\
\centerline{$(m,n) = (a_{i}t+b_{i}, c_{i}t+d_{i})$, $t \in \mathbb{N}, i=1, \ldots, r$}\\
where $a_{i},b_{i},c_{i},d_{i} \in \mathbb{N}, a_{i}c_{i} \neq 0$, and the linear recurrences $F(a_{i}\bullet+b_{i})$ and $G(c_{i}\bullet+d_{i})$ have a nontrivial common factor for $i=1,\ldots,r$.
\end{Th}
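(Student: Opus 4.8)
The plan is to realise $F(m)$ and $G(n)$ as values of two fixed polynomials at a point of $\mathbb{G}_m^{N}(k)$ whose off-$S$ height $h_{\bar S}$ is only $O(\log\max\{m,n\})$ — hence negligible against its full height — and then to feed this into Theorem~\ref{gcdgen}; the structural description of $(m,n)$ will come from unwinding the resulting exceptional subvariety. First I enlarge $S_0$ to a finite set $S$ containing the archimedean places and every $v$ at which some $\alpha_i$ or $\beta_j$ is not a unit, so that all $\alpha_i,\beta_j$ become $S$-units. If all the ratios $\alpha_i/\alpha_1$ are roots of unity (in particular if every $\alpha_i$ is), then $F$ equals an $S$-unit times a recurrence of height $O(\log m)$, so the left side of the inequality in the statement is $O(\log\max\{m,n\})$ and the assertion is trivial; the same applies with $G$, so I exclude these cases. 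Passing to finitely many arithmetic progressions in $m$ and in $n$ — which only multiplies the number of families in the conclusion by a constant — I arrange that all linear recurrences below are non-degenerate. Dividing out $\alpha_1^m$ and $\beta_1^n$ and, if needed, shifting $m,n$, I may take
$$\hat f(x_0,z_2,\dots,z_s)=p_1(x_0)+\sum_{i=2}^{s}p_i(x_0)z_i,\qquad \hat g(y_0,w_2,\dots,w_t)=q_1(y_0)+\sum_{j=2}^{t}q_j(y_0)w_j$$
coprime (disjoint variables, neither constant) and not both vanishing at the origin, with $F(m)/\alpha_1^m=\hat f(m,(\alpha_2/\alpha_1)^m,\dots,(\alpha_s/\alpha_1)^m)$ and similarly for $G$. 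I then set $N=s+t$ and $\mathbf u(m,n)=(m,(\alpha_2/\alpha_1)^m,\dots,(\alpha_s/\alpha_1)^m,\,n,(\beta_2/\beta_1)^n,\dots,(\beta_t/\beta_1)^n)\in\mathbb{G}_m^{N}(k)$.

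\emph{Applying Theorem~\ref{gcdgen}.} The coordinates $(\alpha_i/\alpha_1)^m$ and $(\beta_j/\beta_1)^n$ are $S$-units and contribute nothing to $h_{\bar S}$, while $m$ and $n$ contribute at most $2h(m)+2h(n)=O(\log\max\{m,n\})$, so $h_{\bar S}(\mathbf u(m,n))=O(\log\max\{m,n\})$. Since some $\alpha_i/\alpha_1$ and some $\beta_j/\beta_1$ is not a root of unity, $h(\mathbf u(m,n))\asymp\sum_i h(u_i)\asymp\max\{m,n\}$ with positive implied constants. Hence for every fixed $\delta>0$, all but finitely many $(m,n)$ satisfy $h_{\bar S}(\mathbf u(m,n))<\delta\,h(\mathbf u(m,n))$; given $\epsilon>0$ I pick such a $\delta$ so small that $C\delta^{1/2}\sum_i h(u_i)<\tfrac12\epsilon\max\{m,n\}$ for all $(m,n)$, with $C$ the explicit constant of Theorem~\ref{gcdgen}. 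Theorem~\ref{gcdgen} then gives a proper Zariski closed $Z_0\subset\mathbb{G}_m^{N}$ with $\log\gcd(F(m),G(n))=\log\gcd(\hat f(\mathbf u(m,n)),\hat g(\mathbf u(m,n)))<\tfrac12\epsilon\max\{m,n\}$ for all $(m,n)$ with $\mathbf u(m,n)\notin Z_0$, apart from finitely many (the first equality holds because dividing $F,G$ by the $S$-units $\alpha_1^m,\beta_1^n$ leaves the generalised gcd unchanged). Since $\sum_{v\in M_k\setminus S_0}-\log^-\max\{|F(m)|_v,|G(n)|_v\}\le\log\gcd(F(m),G(n))+O(\log\max\{m,n\})$ — the finitely many places of $S\setminus S_0$ contribute only $O(\log\max\{m,n\})$, by the standard $v$-adic bounds for non-degenerate recurrences — it follows that all but finitely many of the $(m,n)$ to be described satisfy $\mathbf u(m,n)\in Z_0$.

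\emph{Unwinding $Z_0$.} By the remark following Theorem~\ref{combined} (Theorem~5 of \cite{Ev}) I may take $Z_0$ to be a union of positive-dimensional torus cosets. For a coset $cT$, the condition $\mathbf u(m,n)\in cT$ is a finite system of equations $m^{e_0}n^{f_0}A^{m}B^{n}=\rho$ with $A\in\langle\alpha_i/\alpha_1\rangle$, $B\in\langle\beta_j/\beta_1\rangle$, $\rho\in\bar{\mathbb Q}^{\,*}$ and exponent data not all zero, one for each character trivial on $T$. If $\langle A,B\rangle$ has rank $2$ for some such equation, then $h(A^{m}B^{n})$ grows linearly in $\max\{m,n\}$ while $h(\rho\,m^{-e_0}n^{-f_0})=O(\log\max\{m,n\})$, forcing $\max\{m,n\}$ to be bounded, so $cT$ contributes only finitely many pairs. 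Otherwise $\langle A,B\rangle$ has rank $\le1$; writing $A=\zeta_A\eta^{p}$, $B=\zeta_B\eta^{q}$ and evaluating $|\cdot|_{v_0}$ at a place with $|\eta|_{v_0}\neq1$ yields $(pm+qn)\log|\eta|_{v_0}=O(\log\max\{m,n\})$, so $(m,n)$ lies within $O(\log\max\{m,n\})$ of a line $pm+qn=\mathrm{const}$, i.e.\ $(m,n)=(a_it,b_it)+(\mu_1,\mu_2)$ with $\mu_1,\mu_2\ll\log t$. Only finitely many directions $(a_i,b_i)$ occur, since $F(at)$ and $G(bt)$ can have non-negligible gcd only when $\alpha_i^{a}=\zeta\,\beta_j^{b}$ for some $i,j$ and root of unity $\zeta$, and for each of the finitely many pairs $(i,j)$ those $(a,b)$ form a subgroup of $\mathbb Z^2$ of rank at most $1$; this is the first assertion. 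If moreover the roots of $F$ and $G$ are independent (Definition~\ref{indroot}), that hypothesis rules out the exponents $e_0,f_0$ playing any role in the cosets contributing infinitely many pairs, so the $O(\log)$ error disappears and each such coset contributes a genuine affine line $(m,n)=(a_it+b_i,c_it+d_i)$ with $a_ic_i\neq0$; along it $\log\gcd(F(a_it+b_i),G(c_it+d_i))\gg t$ for all $t$, which forces these two linear recurrences in $t$ to share a nontrivial common factor (otherwise their gcd would be $o(t)$), and again only finitely many lines arise.

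\emph{The main difficulty.} Theorem~\ref{gcdgen} delivers a single proper exceptional subvariety, but converting ``$\mathbf u(m,n)\in Z_0$'' into the clean affine-linear families — in particular extracting the common-factor property of $F(a_i\bullet+b_i)$ and $G(c_i\bullet+d_i)$ — is the delicate step: it requires controlling the intersection of $Z_0$ with the image of $\mathbf u$ (equivalently, the zero loci of exponential polynomials in the two variables $m,n$) together with the multiplicative relations inside $\langle\alpha_1,\dots,\alpha_s,\beta_1,\dots,\beta_t\rangle$, and the non-degeneracy reductions and the $O(\log t)$ slack in the general case are precisely the price of permitting polynomial coefficients $p_i,q_j$ rather than constants. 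A secondary technical point is the passage from $\log\gcd$ to the sum over $M_k\setminus S_0$ in the statement, which rests on $v$-adic estimates at the finitely many places of $S\setminus S_0$.
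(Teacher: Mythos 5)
Your first half matches the paper's strategy: realize $F(m)$ and $G(n)$ as values of coprime polynomials in disjoint variables at a point $\mathbf{u}(m,n)$, note that $h_{\bar S}(\mathbf{u}(m,n))=O(\log\max\{m,n\})$ while $h(\mathbf{u}(m,n))\gg\max\{m,n\}$, and invoke the gcd inequality to push all but finitely many exceptional pairs into a proper Zariski closed set $Z_0$. (Two caveats there: dividing by $\alpha_1^m,\beta_1^n$ does \emph{not} leave the full generalized gcd unchanged, only the sum over $v\notin S$; and the places of $S\setminus S_0$ do not contribute $O(\log\max\{m,n\})$ by any ``standard $v$-adic bounds'' --- no such elementary lower bound for $|F(m)|_v$ exists. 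Both points are repaired by the paper's Lemma \ref{rec1}, which gives the weaker but sufficient bound $\tfrac{\epsilon}{2}\max\{m,n\}$ at those places via the Subspace Theorem.)

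The genuine gap is in the unwinding of $Z_0$, which you yourself flag as the main difficulty. You pass to a union of torus cosets via \cite{Ev}, but for almost-$S$-unit points that union is in general \emph{infinite} (the paper says so explicitly after Theorem \ref{combined}), and your entire per-coset analysis --- comparing $h(A^mB^n)$ against $h(\rho\,m^{-e_0}n^{-f_0})$, concluding finiteness for rank-$2$ cosets, extracting finitely many directions $(a_i,b_i)$, and obtaining a uniform implied constant in $\mu_i\ll\log t$ --- presupposes that the translation data $\rho$ and the exponent data range over a finite set; over an infinite family of cosets none of these conclusions follow. The paper instead covers $Z_0$ by a single hypersurface, so that $\mathbf{u}(m,n)\in Z_0$ becomes one exponential-polynomial equation $\sum_{w}P_w(m,n)\,u_1^{ms_{1,w}+nt_{1,w}}\cdots u_r^{ms_{r,w}+nt_{r,w}}=0$ with fixed finite exponent data; after dividing by the first term, the almost-$S$-unit equation (Corollary \ref{unieqc}) combined with the height lower bound of Lemma \ref{multindlemma} yields the dichotomy that actually drives the theorem: either every summand is an almost $(S,\delta)$-unit, in which case Northcott's theorem gives finiteness, or some fixed linear form satisfies $|ms'_{i_0,w_0}+nt'_{i_0,w_0}|\ll\log\max\{m,n\}$, which is precisely what produces the finitely many directions and the $O(\log t)$ error. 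This use of the generalized unit equation is the missing ingredient. Relatedly, your justification for the finiteness of the directions ($F(at)$ and $G(bt)$ can have large gcd only when $\alpha_i^{a}=\zeta\beta_j^{b}$) and your treatment of the independent-roots case (``otherwise their gcd would be $o(t)$'') assert essentially the statements being proved --- the latter is the single-index Theorem \ref{recbad1} --- rather than proving them.
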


\begin{Ex}
Under the set up of Theorem \ref{recfin}, we give an example illustrating the necessity of $(\mu_{1},\mu_{2})$ in the statement:\\

Define the two linear recurrence sequences as: $F(m)=mp^{m}+1$, $G(n)=p^{n}+1$, where $p$ is a prime. In the notations of Theorem \ref{recfin}, $S_{0}=\emptyset$. Let $\epsilon <\log 2$. It is easily seen that for $(m,n)=(p^{k},p^{k}+k)$, $\forall k \in \mathbb{Z}_{>0}$, $F(m)=p^{p^{k}+k}+1=G(n)$, so the inequality
$$\displaystyle \log\gcd\{|F(m)|,|G(n)|\}=\log (p^{p^{k}+k}+1)>\epsilon (p^{k}+k)=\epsilon \max\{m,n\}$$
holds for infinitely many $k$ and hence infinitely many $(m,n)$. 
It is easily seen that such pairs $(m,n)$ do not lie on finitely many lines, but do lie in a logarithmic region around the line $x=y$, i.e., for such pairs we may write $(m,n)=(t,t)+(\mu_{1},\mu_{2})$ with $\mu_{1},\mu_{2}\ll \log t$ in agreement with Theorem \ref{recfin}.\\
\end{Ex}

The above example not only indicates the necessity of $(\mu_1,\mu_2)$, but also inspires us to think further: if every exceptional case is of the form similar to the example? Here we give a positive answer:
\begin{Th}[Theorem \ref{finimp}]
For all but finitely many solutions to the inequality in Theorem \ref{recfin}, there are finitely many choices of nonzero integers $(a_{i},b_{i}, c_{i}, d_{i}),\  a_{i}c_{i} \neq 0$ such that all but finitely many solutions $(m,n) \in \mathbb{N}^{2}$ of the inequality
$$\displaystyle \sum_{v \in M_{k} \setminus S_{0}}-\log^{-}\max\{|F(m)|_{v},|G(n)|_{v}\} > \epsilon \max\{m,n\},$$
either satisfies finitely many linear relations:
$$(m,n)=(a_{i}t+b_{i},c_{i}t+d_{i}), i=1,\ldots,r,$$
or there exist a pair of constants $(a,b)$ with $T:=|am+bn| \ll O(\max\{\log m,\log n\})$ and linear recurrences $f$ and $g$ indexed by $T$ such that $m=f(T)$ and $n=g(T)$. \\
Moreover, if $\{u_i\}_{i=1,\ldots,y_r}$ is a set of the combined roots of $F,G,m,n$ such that $F$ and $G$ can be written as polynomials in variables $T,x_1,\ldots,x_r,y_1,\ldots, y_r$, where $x_i=u_i^T$ and $y_i=u_i^m$. Then $F,G$ admits a non-trivial common divisor in $k[T,x_1,\ldots,x_r,y_1,\ldots,y_r]$.
\end{Th}

\subsection{Background}
Upper bounds for the greatest common divisor of integers of the form $a^n-1$ and $b^n-1$ were first studied by Bugeaud, Corvaja, and Zannier in \cite{BCZ}, where they proved the following inequality:

\begin{Th}[Bugeaud, Corvaja, Zannier \cite{BCZ}]\label{gcd1}
Let $a$, $b$ be multiplicatively independent integers, and let $\epsilon > 0$. Then, provided $n$ sufficiently large, we have
$$\log \gcd(a^{n}-1, b^{n}-1) < \epsilon n.$$
\end{Th}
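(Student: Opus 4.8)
The plan is to obtain Theorem~\ref{gcd1} as a direct specialization of Levin's Theorem~\ref{gcdle}. Apply that theorem with its number of variables equal to $2$, with $f(x_1,x_2)=x_1-1$ and $g(x_1,x_2)=x_2-1$, and with $\Gamma=\langle(a,b)\rangle\subset\mathbb{G}_m^2(\bar{\mathbb{Q}})$ the cyclic group generated by $(a,b)$. These $f,g$ are non-constant, coprime, and $f(0,0)=-1\neq0$, so the hypotheses hold; evaluating at $(u_1,u_2)=(a^m,b^m)\in\Gamma$ gives $f(u_1,u_2)=a^m-1$, $g(u_1,u_2)=b^m-1$, and $\max\{h(a^m),h(b^m)\}=m\max\{h(a),h(b)\}$. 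So Theorem~\ref{gcdle} provides, for each $\eta>0$, a finite union $Z$ of translates of proper algebraic subgroups of $\mathbb{G}_m^2$ with $\log\gcd(a^m-1,b^m-1)<\eta\, m\max\{h(a),h(b)\}$ for all $(a^m,b^m)\in\Gamma\setminus Z$; given the desired $\epsilon$, take $\eta=\epsilon/\max\{h(a),h(b)\}$.

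Two verifications remain. First, $Z$ meets $\Gamma$ in only finitely many points. A proper algebraic subgroup $H\subsetneq\mathbb{G}_m^2$ satisfies, on each component, a nontrivial relation $x_1^{c_1}x_2^{c_2}=1$ with $(c_1,c_2)\neq(0,0)$; hence $(a^m,b^m)$ lies in a translate $\gamma H$ only if $(a^{c_1}b^{c_2})^m$ equals a fixed constant depending on $\gamma$. Multiplicative independence of $a$ and $b$ gives $c_1\log|a|+c_2\log|b|\neq0$, so $|a^{c_1}b^{c_2}|\neq1$ and $m\mapsto(a^{c_1}b^{c_2})^m$ is injective; thus each such translate contains $(a^m,b^m)$ for at most finitely many $m$, and since $Z$ is a finite union of them, $\Gamma\cap Z$ is finite. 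Consequently $(a^m,b^m)\notin Z$ once $m$ is large enough. Second, one must observe that the generalized logarithmic gcd of Section~\ref{pregcd} reduces, for nonzero rational integers, to the classical one: the finite places contribute $\sum_p\min\{v_p(a^m-1),v_p(b^m-1)\}\log p=\log\gcd(a^m-1,b^m-1)$, while the archimedean place contributes $0$ because $|a^m-1|_\infty,|b^m-1|_\infty\geq1$. Renaming $m$ as $n$, this gives $\log\gcd(a^n-1,b^n-1)<\epsilon n$ for all sufficiently large $n$.

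Thus the only substantial ingredient is Theorem~\ref{gcdle} itself, and the deduction above carries no real obstacle. For completeness I record the shape of the original self-contained argument of Bugeaud--Corvaja--Zannier, which one would run if Levin's theorem were unavailable: apply Schmidt's Subspace Theorem in $\mathbb{P}^2$ to the points $[1:a^{-n}:b^{-n}]$ with the linear forms $X_0,X_1,X_2,X_0-X_1,X_0-X_2$. At a prime $p\mid\gcd(a^n-1,b^n-1)$ both $1-a^{-n}$ and $1-b^{-n}$ have $p$-adic absolute value at most $p^{-v_p(\gcd)}$, so selecting these two forms (together with one coordinate form) at each such $p$ makes the relevant double product smaller, by a factor $\gcd(a^n-1,b^n-1)^{-2}$, than its generic value; when $\gcd(a^n-1,b^n-1)^2$ exceeds $H([1:a^{-n}:b^{-n}])^{\epsilon}$ the Subspace Theorem confines the points to finitely many lines, and multiplicative independence of $a,b$ leaves only finitely many $n$ on each line. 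The main obstacle here is that the Subspace Theorem demands a fixed finite set of places while the primes dividing $\gcd(a^n-1,b^n-1)$ vary with $n$; resolving this requires an auxiliary device (enlarging the ambient projective space, or forming suitable products, so that the fluctuating primes are absorbed into a fixed set), and that is precisely the technical core already built into Theorem~\ref{gcdle}.
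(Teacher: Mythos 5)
Your deduction is correct. Note, however, that the paper itself offers no proof of this statement: it is quoted as background from Bugeaud--Corvaja--Zannier, with only the remark that its proof ultimately rests on Schmidt's Subspace Theorem. So there is no internal argument to compare against; what you have done is re-derive the older result as a specialization of the stronger quoted Theorem~\ref{gcdle} of Levin, which is legitimate and non-circular (Levin's theorem is proved independently of Theorem~\ref{gcd1}). Your application is set up correctly: $f=x_1-1$, $g=x_2-1$ are nonconstant, coprime, nonvanishing at the origin, and the height bookkeeping $\max\{h(a^m),h(b^m)\}=m\max\{h(a),h(b)\}$ with $\eta=\epsilon/\max\{h(a),h(b)\}$ is fine since multiplicative independence forces $|a|,|b|\geq 2$. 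The one piece of genuine content you add is the finiteness of $\Gamma\cap Z$, and your argument there is sound: any proper algebraic subgroup of $\mathbb{G}_m^2$ lies in the kernel of a nontrivial character $x_1^{c_1}x_2^{c_2}$, a translate fixes the value of that character, and multiplicative independence gives $|a^{c_1}b^{c_2}|\neq 1$ (if $c_1\log|a|+c_2\log|b|=0$ then $a^{2c_1}b^{2c_2}=1$, a contradiction), so each translate captures at most one exponent $m$. Your observation that the generalized logarithmic gcd of Section~\ref{pregcd} coincides with the classical one for the nonzero integers $a^n-1$, $b^n-1$ (archimedean contribution zero) is also needed and correctly handled. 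The closing sketch of the original subspace-theorem argument is accurate as a description but is not load-bearing and could be omitted.
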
 
Note that even though the statement is simple, the proof requires Schmidt's Subspace Theorem from Diophantine approximation. Actually, for most of the following works, it is the fundamental ingredient in their proofs.\\

Corvaja, Zannier \cite{CZ} and Hern{\'a}ndez, Luca  \cite{HL} subsequently extended Theorem \ref{gcd1} to $S$-unit integers:

\begin{Th}[Corvaja, Zannier \cite{CZ} and Hern{\'a}ndez, Luca \cite{HL}]\label{gcds}
Let $p_{1},\ldots,p_{t} \in \mathbb{Z}$ be prime numbers and let $S=\{\infty, p_{1},\ldots,p_{t}\}$. Then for every $\epsilon >0$,
$$\log \gcd (u-1,v-1)\leq \epsilon \max\{\log|u|,\log|v|\}$$
for all but finitely many multiplicatively independent $S$-unit integers $u,v \in \mathbb{Z}_{S}^{*}$. 
\end{Th}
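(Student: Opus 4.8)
The plan is to obtain the statement as the case $n=2$ of Levin's Theorem~\ref{gcdle}, applied to the polynomials $f(x_1,x_2)=x_1-1$ and $g(x_1,x_2)=x_2-1$ and to the group $\Gamma=(\mathbb{Z}_S^{*})^2\subset\mathbb{G}_m^2(\mathbb{Q})$, where $\mathbb{Z}_S^{*}=\{\pm p_1^{e_1}\cdots p_t^{e_t}:e_i\ge 0\}$ is the finitely generated group of rational integers all of whose prime factors lie in $S$. The hypotheses are met: $f$ and $g$ are non-constant, coprime in $\mathbb{Q}[x_1,x_2]$, and not both zero at the origin, since $f(0,0)=-1$; moreover the generalized logarithmic gcd of Section~\ref{pregcd} reduces to the ordinary $\log\gcd$ on nonzero rational integers, and $h(u)=\log|u|$ for $u\in\mathbb{Z}_S^{*}$ because $|u|_p\le 1$ at every finite place. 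Hence Theorem~\ref{gcdle} produces, for each $\epsilon>0$, a finite union $Z$ of translates of proper algebraic subgroups of $\mathbb{G}_m^2$ with
$$\log\gcd(u-1,v-1)<\epsilon\max\{\log|u|,\log|v|\}\qquad\text{for all }(u,v)\in\Gamma\setminus Z.$$

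It remains to treat $Z$, which is a finite union of finitely many points and positive-dimensional cosets $C=\{x_1^{a}x_2^{b}=c\}$ with $(a,b)\ne(0,0)$; the finitely many points are absorbed into the finite exceptional set of the conclusion. On a coset $C$ I claim that $\gcd(u-1,v-1)$ is bounded by a constant depending only on $C$, for every multiplicatively independent $(u,v)\in\Gamma\cap C$; the displayed inequality then holds on $C$ outside a finite set, since its right-hand side tends to infinity. If $ab=0$, say $b=0$, then $u^{a}=c$ confines $u$ to a finite set and $\gcd(u-1,v-1)\le|u-1|=O(1)$. If $ab\ne 0$, put $d=\gcd(u-1,v-1)$; then $c=u^{a}v^{b}\ne 1$, for otherwise $(u,v)$ would be multiplicatively dependent, and since $u\equiv v\equiv 1\pmod{d}$ while $\gcd(d,uv)=1$, we obtain $c\equiv 1\pmod{p^{v_p(d)}}$ in $\mathbb{Z}_p$ for every prime $p\mid d$, hence $d\mid\prod_{p\mid d}p^{v_p(c-1)}$, a quantity bounded in terms of the finitely many possible values of $c$. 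Combined with the previous paragraph, this proves the theorem, in fact with strict inequality.

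The only point in this reduction that needs any care is the boundedness of $\gcd(u-1,v-1)$ on the positive-dimensional cosets of $Z$, and it comes down to the elementary observation that $u\equiv v\equiv 1\pmod{d}$ forces every monomial $u^{a}v^{b}$, in particular its constant value $c$ on the coset, to be $\equiv 1$ modulo $d$; all the real depth sits inside Theorem~\ref{gcdle}, hence ultimately in Schmidt's Subspace Theorem. For completeness one should recall that \cite{CZ} and \cite{HL}, which predate Theorem~\ref{gcdle}, establish the statement directly within the same circle of ideas: assuming infinitely many multiplicatively independent counterexamples, one uses the divisibility $d\mid u^{i}v^{j}-1$, valid for all $i,j\ge 0$ because $u\equiv v\equiv 1\pmod{d}$, and feeds an $S$-unit point assembled from the monomials $u^{i}v^{j}$ of bounded degree into the Subspace Theorem by the method of Corvaja and Zannier; the exceptional subspace so obtained yields, after specialization, a multiplicative relation between $u$ and $v$. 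In that route the main obstacle is the Subspace Theorem input itself: selecting the auxiliary point and the systems of linear forms, verifying that the divisibility by $d$ pushes the relevant product past Schmidt's threshold, and then reading off multiplicative dependence from the resulting subspace.
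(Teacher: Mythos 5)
Theorem \ref{gcds} is stated in the paper only as a cited background result of Corvaja--Zannier and Hern\'andez--Luca; the paper contains no proof of it, so there is nothing internal to compare your argument against. Your derivation is nonetheless correct as a piece of mathematics within the paper's logical framework: you specialize Theorem \ref{gcdle} to $n=2$, $f=x_1-1$, $g=x_2-1$, and then remove the exceptional set $Z$ by hand. The verification of the hypotheses and the identification of the generalized $\log\gcd$ and of $h(u)$ with $\log|u|$ are all fine, and the elimination of the positive-dimensional cosets $\{x_1^ax_2^b=c\}$ is the genuinely new content of your write-up; your congruence argument there is sound, since for $p\mid d$ one has $u,v\in 1+p^{v_p(d)}\mathbb{Z}_p$, which is a subgroup of $\mathbb{Z}_p^\ast$, so $c=u^av^b\in 1+p^{v_p(d)}\mathbb{Z}_p$ even for negative exponents, and $c\neq 1$ by multiplicative independence forces $d$ to divide the numerator of $c-1$. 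Two cosmetic points: the set $\{\pm p_1^{e_1}\cdots p_t^{e_t}:e_i\geq 0\}$ is a monoid, not a group, so you should take $\Gamma=(\mathcal{O}_{\mathbb{Q},S}^{\ast})^2$ (as the paper itself suggests after Theorem \ref{gcdle}) and restrict attention to the integer points of it; and you should note explicitly that multiplicative independence excludes $u=\pm1$ and $v=\pm1$, so that $u-1$ and $v-1$ are nonzero and the gcd is well defined. Be aware, as you acknowledge, that this route is historically circular --- Levin's theorem is the descendant, not the ancestor, of Theorem \ref{gcds} --- but within the paper, which takes Theorem \ref{gcdle} as given from \cite{Le}, the deduction is legitimate and in fact mirrors how the paper positions Theorem \ref{gcds} as a special case of the bivariate Theorem of \cite{CZ1}.
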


More generally,  Corvaja and Zannier proved an inequality in the case of bivariate polynomials.
\begin{Th}[Corvaja, Zannier \cite{CZ1}]
Let $\Gamma \subset \mathbb{G}_{m}^{2}(\bar{\mathbb{Q}})$ be a finitely generated group. Let $f(x,y), g(x,y) \in \bar{\mathbb{Q}}[x,y]$ be nonconstant coprime polynomials such that not both of them vanish at $(0,0)$. For all $\epsilon >0$, there exists a finite union $Z$ of translates of proper algebraic subgroups of $\mathbb{G}_{m}^{2}$ such that
$$\log \gcd (f(u,v),g(u,v)) < \epsilon \max\{h(u),h(v)\}$$
for all $(u,v) \in \Gamma \setminus Z$.
\end{Th}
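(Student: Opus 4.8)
\medskip
\noindent\textbf{Proof sketch.} This statement is the case $n=2$ of Levin's Theorem~\ref{gcdle}, so it is formally covered by a result recalled above; still, it is worth outlining a direct argument along the lines of Corvaja and Zannier, as the same mechanism underlies Theorems~\ref{combined}--\ref{gcdgen}. The plan is: (i) reduce to the case where $\Gamma$ is a group of $S$-units in a number field; (ii) realize $\log\gcd(f(u,v),g(u,v))$ as a sum of local contributions concentrated at the places where $(u,v)$ is $v$-adically close to one of the finitely many common zeros of $f$ and $g$; and (iii) apply Schmidt's Subspace Theorem to a high-dimensional point built from monomials in $u,v$, exploiting the filtration of a space of polynomials by order of vanishing at those common zeros.

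For step (i): since $\Gamma$ is finitely generated, choose a number field $k$ containing the coordinates of a finite generating set of $\Gamma$, the coefficients of $f$ and $g$, and the (finitely many) coordinates of the points of $V(f)\cap V(g)$. Enlarging $k$ and picking a finite $S\supseteq M_k^\infty$, we may arrange that $\Gamma\subseteq(\mathcal O_{k,S}^{*})^{2}$, that $f$ and $g$ (after clearing denominators) have $S$-integral coefficients and $S$-unit leading coefficients, that $\operatorname{Res}_x(f,g)$ and $\operatorname{Res}_y(f,g)$ are $S$-units, and that every coordinate of every point of $V(f)\cap V(g)$ is an $S$-unit. It then suffices to establish the inequality for $(u,v)\in(\mathcal O_{k,S}^{*})^{2}$ outside a finite union of translates of proper algebraic subgroups of $\mathbb G_m^2$; the statement for the original $\Gamma$ follows by restriction.

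For step (ii): coprimality of $f,g$ in the two-variable polynomial ring gives $V(f)\cap V(g)=\{P_1,\dots,P_r\}$ finite, with no $P_j$ equal to the origin. Writing $\log\gcd(f(u,v),g(u,v))=\sum_{w\notin S}-\log^-\max\{|f(u,v)|_w,|g(u,v)|_w\}$, any $w$ contributing to the sum forces $(u,v)$ to reduce modulo $w$ to one of the $P_j$ (here we use that $f,g$ remain coprime modulo $w$, by the choice of $S$). The Nullstellensatz, applied to the zero-dimensional local ring of $\mathbb A^2$ at $P_j$, yields an integer $N$ and auxiliary polynomials showing that at such a $w$ the quantity $-\log^-\max\{|f(u,v)|_w,|g(u,v)|_w\}$ is at most $N$ times the $w$-adic proximity of $(u,v)$ to $P_j$. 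Hence, up to an $O(1)$ term, the gcd is bounded by a sum over $j$ of gcd-type quantities for the pair of linear forms $x-P_{j,1},\ y-P_{j,2}$; since every $P_{j,i}$ is an $S$-unit and $(u,v)\in(\mathcal O_{k,S}^{*})^{2}$, coordinates with $P_{j,i}=0$ contribute nothing and the rest rescale to the model case $\gcd(u'-1,v'-1)$ with $u',v'$ $S$-units.

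For step (iii), which is also where the main difficulty lies: fix a large integer $D$, let $V$ be the span of the monomials $x^ay^b$ with $0\le a,b\le D$, so $\dim V=(D+1)^2=:M$, and for each relevant $P_j$ take a basis $L_{j,1},\dots,L_{j,M}$ of $V$ adapted to the filtration by order of vanishing at $P_j$; a dimension count gives $\sum_i\operatorname{ord}_{P_j}L_{j,i}\gg D^3$, so the ratio of this total order to $\dim V=M$ grows without bound in $D$. Applying Schmidt's Subspace Theorem in Schlickewei's $S$-adic form to the point with projective coordinates $(\,u^a v^b\,)_{0\le a,b\le D}$ --- at each place using the coordinate forms together with the adapted forms $L_{j,i}$ for the $P_j$ that $(u,v)$ approaches at that place --- the supposition that $\log\gcd(f(u,v),g(u,v))$ exceeds $\epsilon\max\{h(u),h(v)\}$, together with the unbounded gain above (which for $D$ large overcomes the loss of a factor $D$ incurred in passing to the height of the monomial point), makes the total defect of the system exceed $M$. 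Hence the Subspace Theorem confines $(u,v)$ to a fixed finite union of hyperplanes; intersecting with $\mathbb G_m^2$ and invoking the structure of torus--hyperplane intersections (finitely many translates of proper subtori, cf.\ \cite{Ev}) produces the exceptional set $Z$. The delicate part is exactly this last step: assembling the local data at the (a priori infinitely many) gcd-contributing places into a system of linear forms drawn from the fixed finite pool $\{L_{j,i}\}$, verifying the defect inequality with the correct numerical constants, and upgrading ``finitely many hyperplanes'' to ``finitely many translates of proper algebraic subgroups''; everything before it is routine commutative algebra and height theory.
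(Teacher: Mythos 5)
Your overall architecture (reduce to $S$-units in a number field, pass to a high-degree space of polynomials, gain in the Subspace Theorem from a vanishing filtration, then clean up the exceptional set) is the right family of ideas, but the core of step (iii) has a genuine gap, and it is exactly the point you defer to at the end. The places $w$ at which $\log\gcd(f(u,v),g(u,v))$ is supported lie \emph{outside} $S$ and vary with $(u,v)$, with no uniform bound on their number; Schmidt's Subspace Theorem (even in Schlickewei's $S$-adic form) requires a fixed finite set of places chosen before the points are considered. With your choice of projective coordinates $(u^a v^b)_{0\le a,b\le D}$, every coordinate is an $S$-unit, so the height of the point and all the local data at places of $S$ are completely blind to the gcd: the coordinate forms at $v\in S$ contribute essentially $M\,h(\mathbf{x})$ and the adapted forms $L_{j,i}$ have no reason to be small at any $v\in S$, since $(u,v)$ approaches the common zeros $P_j$ only at the gcd-supporting places outside $S$. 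So as written there is no admissible application of the Subspace Theorem in which the hypothesized lower bound $\log\gcd>\epsilon\max\{h(u),h(v)\}$ produces a defect exceeding the threshold; the ``assembling the local data at the (a priori infinitely many) gcd-contributing places'' that you flag as delicate is not a technical afterthought but the missing proof.

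The mechanism that the paper (and Levin's Theorem 3.2, which the paper's Theorem~\ref{notinsgen} follows) uses to close precisely this hole is different: the point fed to the Subspace Theorem is $P=(\phi_1(\mathbf{u}),\ldots,\phi_N(\mathbf{u}))$ where $\{\phi_i\}$ is a basis of $(f,g)_{(m)}$, i.e.\ every coordinate is a value of an element of the ideal, hence $w$-adically small at each gcd-supporting $w\notin S$; the product formula then converts this outside-$S$ divisibility into the inequality $\sum_{v\in S}\log|P|_v\geq h(P)+\log\gcd-m\sum_{v\notin S}\lambda_v(\mathbf{u})-O(1)$, so the gcd is seen entirely through data at the fixed set $S$, where the linear forms (expressing monomials, corrected by a $v$-adically chosen basis of $k[x_1,\ldots,x_n]_m/(f,g)_{(m)}$) are controlled; the counting input is the colength/Hilbert-function estimate of Lemma~\ref{CLZlemma}, not order of vanishing at the finitely many common zeros. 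Two secondary points: your step (i) cannot make all coordinates of the $P_j$ into $S$-units when a coordinate is $0$ (harmless, as you note, since $S$-unit points never approach such $P_j$ outside $S$, but the reduction to the model case $\gcd(u'-1,v'-1)$ would in any case only exclude an infinite union of dependence loci, not a finite $Z$); and a hyperplane section of $\mathbb{G}_m^2$ is not a finite union of torus cosets, so the upgrade of the exceptional set requires the unit-equation/subgroup structure theorems for $\Gamma$-points on the exceptional hypersurfaces (as in \cite{Le}, or Theorem 5 of \cite{Ev} in the thickened setting), not ``the structure of torus--hyperplane intersections.''
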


Levin \cite{Le} then generalized Corvaja-Zannier's theorem to higher-dimensions, which is stated in Theorem \ref{gcdle}.

In a slightly different direction, Luca \cite{Lu} extended Theorem \ref{gcds} to rational numbers $u$ and $v$ that are ``close" to being an $S$-unit. Let $u$ be a non-zero rational number, and $S$ a fixed finite set of primes. We may write $u$ uniquely, up to a sign, in the form $u=u_{S}\cdot u_{\bar{S}}$, where $u_{S}$ is a rational number in reduced form having both its numerator and denominator composed of primes in $S$, and $u_{\bar{S}}$ is a rational number in reduced form having both its numerator and denominator free of primes from $S$. Luca proved the following:\\

\begin{Th}[Luca \cite{Lu}]\label{lura}
Let $S$ be a finite set of places of $\mathbb{Q}$. For $\epsilon >0$, there exist three positive constants $K_{1}$, $K_{2}$, $K_{3}$ depending on $S$ and $\epsilon$, such that for any rational numbers $u$ and $v$ satisfying
$$\log \gcd(u-1,v-1) \geq \epsilon \max \{h(u),h(v)\},$$
one of the following three conditions holds:
\begin{enumerate}[(i)]
\item$\max \{h_{rat}(u),h_{rat}(v)\} < K_{1}$,
\item$u^{i}=v^{j}$ with $\max \{|i|,|j|\} <K_{2}$,
\item$\max \{h_{\bar{S}}(u),h_{\bar{S}}(v)\} > K_{3}\dfrac{h}{\log h}$,
\end{enumerate}
where $h_{\bar{S}}(u)=h(u_{\bar{S}})$, $h_{rat}\left(\dfrac{x}{y}\right)=\max\left\{\dfrac{h(x)}{h(y)},\dfrac{h(y)}{h(x)}\right\}$ and $h=\max\{h(u),h(v)\}$. 
\end{Th}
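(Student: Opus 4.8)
The plan is to argue by contraposition: I assume that the displayed inequality $\log\gcd(u-1,v-1)\geq\epsilon\max\{h(u),h(v)\}$ holds and that both alternatives (i) and (iii) fail, and I deduce the multiplicative relation (ii). The failure of (iii) gives $h_{\bar S}(u),\, h_{\bar S}(v)\leq K_3\,h/\log h$, so in Luca's normal form $u=u_S u_{\bar S}$, $v=v_S v_{\bar S}$ the $S$-unit parts $u_S,v_S$ carry all but an $O(h/\log h)$ amount of the total height; thus $u$ and $v$ are $S$-units up to a controlled perturbation. The failure of (i) lets me clear denominators cheaply: if $h_{rat}(u)\geq K_1$ then one of the numerator and denominator of $u$ dominates $h(u)$ by a factor $K_1$, so replacing $u$ by an associated $S$-integer changes the height by a bounded factor and loses at most $\epsilon/2$ of the gcd on the right-hand side; hence I may assume $u$ and $v$ are $S$-integers that are near $S$-units, with $\log\gcd(u-1,v-1)\geq(\epsilon/2)h$.

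Next I would run the Corvaja--Zannier--Levin Subspace Theorem machinery. Fix a large integer $N$, let $M+1=\binom{N+2}{2}$, and form the projective point $\mathbf{x}=(u^a v^b)_{0\leq a+b\leq N}\in\mathbb{P}^M$. For each place $w$ of $k$ I choose $M+1$ linearly independent linear forms from the fixed finite collection consisting of the coordinate forms $X_{(a,b)}$ together with the difference forms $X_{(a,b)}-X_{(a-1,b)}$ (whose value at $\mathbf{x}$ is $u^{a-1}v^{b}(u-1)$) and $X_{(a,b)}-X_{(a,b-1)}$ (value $u^{a}v^{b-1}(v-1)$): at a place contributing to $\log\gcd(u-1,v-1)$ I select as many difference forms as a filtration argument allows, and elsewhere I take coordinate forms. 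Summing the local contributions, the difference forms produce a term bounded below by a positive constant (depending on $N$) times $\log\gcd(u-1,v-1)$, while $h(\mathbf{x})\leq N\max\{h(u_S),h(v_S)\}+O(N(h_{\bar S}(u)+h_{\bar S}(v)))\leq N\max\{h(u),h(v)\}$ and the perturbation term is $O(NK_3 h/\log h)$. Choosing $N$ of order $\log h$ — small enough that $NK_3 h/\log h$ stays well below $\epsilon h$, which is exactly where the $h/\log h$ threshold of (iii) is forced, and large enough that the filtration gain from the difference forms exceeds the Subspace error $(M+1+\delta)h(\mathbf{x})$ — the hypotheses of Schmidt's Subspace Theorem are met. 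Crucially, the difference and coordinate forms come from a single finite collection, so the conclusion, that $\mathbf{x}$ lies in one of finitely many hyperplanes, is uniform in the varying set of places at which $u$ or $v$ fails to be a unit.

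That hyperplane containment yields a nontrivial polynomial relation $\sum_{a,b} c_{a,b}\, u^{a} v^{b}=0$ of degree at most $N$, and it remains to upgrade this to (ii). Rewriting it via $u=u_S u_{\bar S}$, $v=v_S v_{\bar S}$, this is a perturbed $S$-unit equation in the quantities $u_S^{a}v_S^{b}$; by the theory of $S$-unit equations (Evertse--Schlickewei--Schmidt), passing to a minimal vanishing subsum forces either a two-term relation $c\,u_S^{i}v_S^{-j}=c'$, which on reincorporating the small $\bar S$-parts and using $h_{\bar S}=o(h)$ gives $u^{i}=v^{j}$ with $\max\{|i|,|j|\}$ bounded in terms of $N$ — hence of $S$ and $\epsilon$ — which is alternative (ii); or else the structure of the solutions forces one of $h_{rat}(u),h_{rat}(v)$ to be bounded, which is alternative (i) and contradicts our standing assumption. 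Tracking constants through this dichotomy produces the $K_1,K_2,K_3$ of the statement.

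The main obstacle is the quantitative balancing in the Subspace step: one must make the filtration count of difference forms explicit, track how the perturbation error scales with the auxiliary degree $N$, and verify that the gcd gain genuinely dominates precisely when $N\asymp\log h$ — confirming that the $h/\log h$ in (iii) is the right threshold for the method rather than an artifact. A secondary difficulty is the final extraction of a bounded-exponent multiplicative relation from the polynomial relation, which requires a version of the $S$-unit equation theorem with control on the exponents in the dependence, together with care that the $\bar S$-perturbations do not accumulate when restricting to a minimal subsum.
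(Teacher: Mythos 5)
A preliminary remark: Theorem \ref{lura} is stated in this paper as a quoted result of Luca \cite{Lu}; the paper gives no proof of it, so there is no internal argument to compare yours against, and your proposal has to be judged on its own. Its overall shape (a Corvaja--Zannier--Levin style Subspace argument with monomials $u^av^b$, difference forms, and a perturbation term coming from the failure of (iii)) is reasonable and, for a \emph{fixed} auxiliary degree $N$, is essentially the machinery of Theorem \ref{notinsgen} of this paper specialized to $f=x_1-1$, $g=x_2-1$. The genuine gap is in the quantitative step you flag yourself: you take $N\asymp\log h$, so the system of linear forms, the ambient dimension $M+1=\binom{N+2}{2}$, and hence the application of Schmidt's Subspace Theorem all depend on the point $(u,v)$. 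The qualitative Subspace Theorem yields finitely many exceptional hyperplanes only for a fixed system of forms; your uniformity remark only covers the finitely many choices of local data for a fixed $N$, not uniformity as $N\to\infty$. Making the parameter grow with $h$ requires a quantitative Subspace Theorem, and even then only the \emph{number} of exceptional subspaces is controlled, not their heights, so the exceptional case cannot be dispatched as you propose. Relatedly, your endgame is internally inconsistent: if $N\asymp\log h$, then the degree of the exceptional hyperplane relation, and hence the exponents $(i,j)$ you extract, are bounded only by $O(\log h)$, not by a constant $K_2(S,\epsilon)$ as alternative (ii) demands; the phrase ``bounded in terms of $N$ --- hence of $S$ and $\epsilon$'' contradicts your choice of $N$. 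Conversely, if $N$ is held constant (which already suffices to absorb the perturbation, since $K_3$ may be chosen small and the failure of (iii) gives $h_{\bar S}\leq K_3h/\log h\leq \delta h$ for large $h$), the argument only reproduces a statement of the type of Theorem \ref{gcdcomb}/Corollary \ref{ins}: an unspecified proper Zariski-closed exceptional set, no bounded exponents, and no $h/\log h$ threshold --- strictly weaker than Luca's trichotomy. The quantitative features of Luca's statement ($K_2$ and $h/\log h$) are precisely what a soft Subspace argument does not deliver; note the paper itself points out that Luca's companion result, Theorem \ref{lubin} in the same article \cite{Lu}, is proved without the Subspace Theorem.

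Two further gaps in the reduction and the endgame. First, the negation of (i) is $\max\{h_{rat}(u),h_{rat}(v)\}\geq K_1$, which gives information about only one of $u,v$; your step ``hence I may assume $u$ and $v$ are $S$-integers'' uses it for both, and in any case a dominant numerator does not make $u$ an $S$-integer (the clearing-of-denominators and its effect on both the generalized gcd and on the near-$S$-unit property of the new point need a careful, if routine, estimate). Second, the passage from the hyperplane relation $\sum c_{a,b}u^av^b=0$ to (ii) is asserted rather than proved: a minimal vanishing subsum together with the unit-equation theorem gives at best $u^iv^{-j}=c$ with $c$ ranging over a set depending on the hyperplane coefficients (hence, in your setup, on $N$ and on the point), and upgrading $u^i=c\,v^j$ to $u^{i'}=v^{j'}$ with bounded exponents, as well as the claim that the remaining degenerate case forces $h_{rat}$ (a height \emph{ratio}, not a height) to be bounded, are exactly the places where the real work of Luca's theorem lies and are not supplied by the outline.
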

This shows the G.C.D. of two rational integers $u-1$ and $v-1$ cannot be large unless $u$ and $v$ are multiplicatively dependent or have large non-$S$ height. One main theorem of this paper (Corollary \ref{notins}) can also be viewed as a generalization of Theorem \ref{gcdle} along the lines of Luca's theorem.\\



On the other hand, Levin \cite{Le} also gave a classification (Theorem \ref{lrsle}) of large G.C.D.s among terms from simple linear recurrence sequences (see also earlier work of Fuchs \cite{F}). A primary goal of our work is to study the case of general linear recurrences (i.e., without the assumption that the linear recurrence is simple). In the case of binary linear recurrences, Luca \cite{Lu} showed:

\begin{Th}[Luca \cite{Lu}]\label{lubin}
Let $a$ and $b$ be non-zero integers which are multiplicatively independent, and let $f$, $g$, $f_{1}$ and $g_{1}$ be non-zero polynomials with integer coefficients. For every positive integer $n$ set
$$u_{n}=f(n)a^{n}+g(n)$$
and
$$v_{n}=f_{1}(n)b^{n}+g_{1}(n).$$
Then, for every fixed $\epsilon> 0$ there exists a positive constant $C_{\epsilon} > 0$ depending on $\epsilon$ and on the given data $a, b, f , f_{1}, g$ and $g_{1}$, such that
$$\log \gcd (u_{n},v_{m}) <\epsilon \max \{m,n\}$$
holds for all pairs of positive integers $(m,n)$ with $\max \{m, n\} > C_{\epsilon}$.
\end{Th}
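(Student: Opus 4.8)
The plan is to reduce this binary-linear-recurrence statement to the multivariable GCD inequality of Theorem~\ref{gcdgen} (equivalently Theorem~\ref{gcdcomb}), applied with $n=2$ to a carefully chosen pair of coprime polynomials. First I would introduce the single number field $k$ (here $k = \mathbb{Q}$ suffices, but working over a number field costs nothing) and a finite set of places $S$ containing the archimedean place together with all primes dividing the numerators and denominators of the coefficients of $f,g,f_1,g_1$ and of $a,b$. The idea is that $\log\gcd(u_n,v_m)$ should be captured by $\log\gcd\bigl(f(n)a^n+g(n),\,f_1(m)b^m+g_1(m)\bigr)$, and that the ``$n$-dependence'' sitting inside the polynomial coefficients $f(n),g(n)$ (which is only logarithmic in size, since $h(f(n))\ll\log n$) can be absorbed. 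Concretely, I would set $x = a^n$ and $y = b^m$ and consider, for each fixed residue class of $(m,n)$ modulo a suitable integer and each choice of the ``small'' part, polynomials obtained by treating $f(n),g(n),f_1(m),g_1(m)$ as coefficients; but cleaner is to observe that $a^n$ and $b^m$ are $S$-units in $\mathcal{O}_{k,S}^*$, while $u_n$ and $v_m$ differ from the $S$-integer polynomial values by bounded-height factors, so the points $(a^n, b^m)$ lie in $\mathbb{G}_m^2(k)$ and — because $a,b$ are $S$-units — even in $(\mathcal{O}_{k,S}^*)^2 \subseteq \mathbb{G}_m^2(k)_{S,\delta}$ for every $\delta>0$.

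The key technical step is to handle the polynomial coefficients $f(n)$, etc., which are not constants. I would split into finitely many cases according to the degrees and leading behaviour: writing $f(n) = n^d(c_d + O(1/n))$ is too crude, so instead I would use the standard device of replacing the pair $(n,m)$ by $(n,m) = (\lambda_1 + e_1\ell,\, \lambda_2 + e_2\ell)$ along arithmetic progressions, or — following Levin's treatment in Theorem~\ref{lrsle} — absorb the polynomial factors by noting that for a prime $\mathfrak{p}\notin S$, $v_{\mathfrak p}(f(n)a^n+g(n))$ differs from $v_{\mathfrak p}$ of the corresponding value of a fixed polynomial $F(X,Y) = f_0 X + g_0$ in $X=a^n$, $Y=(\text{auxiliary})$ only by contributions of size $O(\log\max\{m,n\})$ coming from the primes dividing $f(n)g(n)$. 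Since there are $\ll \log n$ such primes and each contributes $\ll \log n$, the total error is $O((\log\max\{m,n\})^2) = o(\epsilon\max\{m,n\})$. Thus
$$\log\gcd(u_n,v_m) \le \log\gcd\bigl(a^n - \gamma_1,\, b^m - \gamma_2\bigr) + O\bigl((\log\max\{m,n\})^2\bigr)$$
for suitable constants $\gamma_1,\gamma_2\in k^*$ (the ratios $-g(n)/f(n)$ etc., which after passing to arithmetic progressions become fixed), reducing to a GCD of $S$-unit-minus-constant, which is exactly the setting of Theorem~\ref{gcdgen} with $f(x_1,x_2)=x_1-\gamma_1$, $g(x_1,x_2)=x_2-\gamma_2$, coprime and not both vanishing at the origin (assuming $\gamma_1,\gamma_2\neq 0$; the degenerate case $g(n)\equiv 0$ must be treated separately and is easier, reducing to an $S$-unit equation argument).

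Applying Theorem~\ref{gcdgen} (with $\delta$ chosen small in terms of $\epsilon$, and with the trivial input $h_{\bar S}(\mathbf{u})=0$ since $a^n,b^m$ are $S$-units) yields a proper Zariski closed $Z\subset\mathbb{G}_m^2$ such that $\log\gcd(a^n-\gamma_1,b^m-\gamma_2) < \tfrac{\epsilon}{2}(h(a^n)+h(b^m)) \ll \epsilon\max\{m,n\}$ for $(a^n,b^m)\notin Z$. The exceptional set $Z$, being a finite union of torus cosets in $\mathbb{G}_m^2$, forces either $a^n$ or $b^m$ to be bounded (only finitely many $(m,n)$) or forces a multiplicative relation $a^{in} = c\, b^{jm}$ with bounded exponents; the multiplicative independence of $a$ and $b$ rules out the latter except on finitely many pairs. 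Combining over the finitely many arithmetic progressions and finitely many degenerate sub-cases gives the bound for all $(m,n)$ with $\max\{m,n\} > C_\epsilon$. The main obstacle I anticipate is the bookkeeping in the first reduction: making rigorous the claim that replacing the polynomial coefficients by constants changes $\log\gcd$ only by $O((\log\max\{m,n\})^2)$, which requires carefully tracking, place by place, how $v_{\mathfrak p}(f(n))$ interacts with $v_{\mathfrak p}(a^n + \cdots)$ — in particular ensuring no large common factor is created or destroyed by the polynomial parts — and organizing the arithmetic-progression decomposition so that the "constants" $\gamma_1,\gamma_2$ are genuinely fixed on each piece.
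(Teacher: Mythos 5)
This statement is quoted from Luca and the paper gives no proof of it (indeed it remarks that Luca's argument avoids the Subspace Theorem entirely); within the paper's own framework it follows as a special case of Theorem \ref{recbad2}, since the roots of $u_n$ and $v_m$ are $\{a,1\}$ and $\{b,1\}$, which are multiplicatively independent in the sense of Definition \ref{indroot} and give $S_0=\emptyset$. Your high-level plan — feed the $S$-unit points built from $a^n$ and $b^m$ into the multivariable GCD inequality and dispose of the exceptional torus cosets via multiplicative independence — is exactly the right one and matches the paper's Theorem \ref{recbad2}.

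However, there is a genuine gap at the crucial reduction step. You replace $u_n=f(n)a^n+g(n)$ by $a^n-\gamma_1$ with $\gamma_1$ ``fixed after passing to arithmetic progressions.'' The ratio $-g(n)/f(n)$ is not eventually constant on any arithmetic progression unless $f$ and $g$ are proportional, so there is no fixed two-variable polynomial $x_1-\gamma_1$ whose values at $a^n$ control $u_n$. Your fallback estimate is also unjustified: for a prime $\mathfrak p\nmid f(n)g(n)$ one has $v_{\mathfrak p}(u_n)=v_{\mathfrak p}\bigl(a^n+g(n)f(n)^{-1}\bigr)$, and the residue of $g(n)f(n)^{-1}$ modulo powers of $\mathfrak p$ varies with both $n$ and $\mathfrak p$, so the discrepancy between $v_{\mathfrak p}(u_n)$ and $v_{\mathfrak p}(a^n-\gamma_1)$ is in no way confined to the $O(\log n)$ primes dividing $f(n)g(n)$; the claimed $O\bigl((\log\max\{m,n\})^2\bigr)$ error has no proof and the paper's own example ($F(m)=mp^m+1$, $G(n)=p^n+1$) shows the polynomial coefficients can alter the arithmetic of the terms in an essential, non-logarithmic way. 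The correct repair — and the reason the paper introduces almost $(S,\delta)$-units at all — is to keep the indices as coordinates: apply Corollary \ref{notins} in four variables to the coprime polynomials $\hat f(x_1,x_2)=f(x_1)x_2+g(x_1)$ and $\hat g(x_3,x_4)=f_1(x_3)x_4+g_1(x_3)$ evaluated at $P_{m,n}=(n,a^n,m,b^m)$, which lies in $\mathbb{G}_m^4(k)_{S,\delta}$ for large $\max\{m,n\}$ because $h(n)+h(m)\ll\log\max\{m,n\}$ is a $\delta$-fraction of $h(P_{m,n})\gg\max\{m,n\}$; the exceptional set is then handled by the almost-$S$-unit equation (Corollary \ref{unieqc}) together with Lemma \ref{multindlemma} and the multiplicative independence of $a$ and $b$, as in the proof of Theorem \ref{recbad2}.
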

The essential assumption is that $a$ and $b$ are multiplicatively independent integers, which gives a contradiction to condition (ii) of Theorem \ref{lura}. Note that Theorem \ref{lubin} is proved without the assistance of Schmidt's Subspace Theorem, so one should expect that a stronger result can be proved with the Subspace Theorem applied to general linear recurrence sequences. In fact, a recent result due to Grieve and Wang \cite{GW} on general linear recurrences generalized Luca's binary case, and recovered Levin's result \ref{lrsle} at the same time. We will give an alternative proof of this theorem later.\\

Levin \cite{Le} applied Theorem \ref{gcdle} to terms from simple linear recurrence sequences, giving a classification of when two such terms may have a large G.C.D..
\begin{Th}[Levin \cite{Le}]\label{lrsle}
Let
$$\displaystyle F(m)=\sum_{i=1}^{s}c_{i}\alpha_{i}^{m},$$
$$\displaystyle G(n)=\sum_{j=1}^{t}d_{j}\beta_{j}^{n},$$
define two algebraic simple linear recurrence sequences. Let $k$ be a number field such that $c_{i},\alpha_{i},d_{j},\beta_{j} \in k$ for $i=1,\ldots,s$, $j=1,\ldots,t$. Let $M_{k}$ be the canonical set of places in $k$. Let
$$S_{0}=\{v \in M_{k}:\max\{|\alpha_{1}|_{v},\ldots,|\alpha_{s}|_{v},|\beta_{1}|_{v},\ldots,|\beta_{t}|_{v}\}<1\}.$$
Let $\epsilon >0$. All but finitely many solutions $(m,n)$ of the inequality
$$\displaystyle \sum_{v \in M_{k}\setminus S_{0}}-\log^{-}\max\{|F(m)|_{v},|G(n)|_{v}\}>\epsilon \max\{m,n\}$$
satisfy one of finitely many linear relations
$$(m,n)=(a_{i}t+b_{i},c_{i}t+d_{i}),\ t \in \mathbb{Z},\ i=1,\ldots, r,$$
where $a_{i},b_{i},c_{i},d_{i} \in \mathbb{Z},\ a_{i}c_{i}\neq 0$, and the linear recurrences $F(a_{i}\bullet+b_{i})$ and $G(c_{i}\bullet+d_{i})$ have a nontrivial common factor for $i=1,\ldots,r.$
\end{Th}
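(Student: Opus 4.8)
\emph{Overall strategy and setup.}
The plan is to derive Theorem~\ref{lrsle} from Levin's polynomial G.C.D.\ inequality (Theorem~\ref{gcdle}): I would write $F(m)$ and $G(n)$ as linear forms evaluated on a finitely generated subgroup of a two‑dimensional torus, apply Theorem~\ref{gcdle}, and transport the combinatorics of the resulting exceptional set back into linear relations between $m$ and $n$. Concretely, first enlarge $S_0$ to a finite set $S$ of places containing the archimedean ones so that every $\alpha_i,\beta_j,c_i,d_j$ is an $S$‑unit; if $s=1$ or $t=1$ the sum in the statement is $O(1)$ and there are only finitely many solutions, and after decomposing $F$ and $G$ along residue classes modulo the orders of any torsion among the roots we may assume the sequences are non‑degenerate. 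Assume $s,t\ge 2$. Write $F(m)=\alpha_1^m\widetilde F(m)$, $\widetilde F(m)=c_1+\sum_{i\ge 2}c_i(\alpha_i/\alpha_1)^m$, and $G(n)=\beta_1^n\widetilde G(n)$, $\widetilde G(n)=d_1+\sum_{j\ge 2}d_j(\beta_j/\beta_1)^n$, and let $\phi\colon\mathbb Z^2\to\mathbb G_m^{(s-1)+(t-1)}(k)$ be the homomorphism $\phi(m,n)=\bigl((\alpha_2/\alpha_1)^m,\ldots,(\alpha_s/\alpha_1)^m,(\beta_2/\beta_1)^n,\ldots,(\beta_t/\beta_1)^n\bigr)$. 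Since $\phi$ is generated by its values at $(1,0)$ and $(0,1)$, which lie in complementary factors of the ambient torus and have infinite order by non‑degeneracy, the Zariski closure $T$ of $\phi(\mathbb Z^2)$ is the product of a one‑dimensional torus in the $x$‑coordinates and one in the $y$‑coordinates; thus $T\cong\mathbb G_m^2$, $\phi(\mathbb Z^2)$ is dense in $T$, and in coordinates $(u,w)$ with $\phi(m,n)=(u_0^m,w_0^n)$ (and $u_0,w_0$ of infinite order) the quantities $\widetilde F(m),\widetilde G(n)$ become $f(u_0^m),g(w_0^n)$ for polynomials $f(u,w)=\hat f(u)$, $g(u,w)=\hat g(w)$ (after clearing denominators) that are nonconstant, depend on disjoint variables hence are coprime, and have nonzero constant term, so do not both vanish at $(0,0)$.

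\emph{Reduction to a G.C.D.\ and application of Theorem~\ref{gcdle}.}
I would next establish that, outside a finite union of lines and points (the exceptional lines being handled by induction on $s+t$, i.e.\ by Theorem~\ref{lrsle} for recurrences with fewer summands),
$$\sum_{v\in M_k\setminus S_0}-\log^-\max\{|F(m)|_v,|G(n)|_v\}\ \le\ \log\gcd\bigl(f(u_0^m),g(w_0^n)\bigr)+o(\max\{m,n\}),$$
where $\log\gcd$ is the generalized logarithmic G.C.D.; the places $v\notin S$ contribute identically on both sides because $\alpha_1,\beta_1$ are $S$‑units, and the only delicate terms come from the finitely many $v\in S\setminus S_0$ at which several roots of $F$ (or of $G$) are co‑dominant, which are controlled by a Subspace‑Theorem lower bound for the shorter co‑dominant sub‑recurrence (equivalently, by the induction). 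Then apply Theorem~\ref{gcdle} inside $T\cong\mathbb G_m^2$ to $f,g$ and the finitely generated group $\Gamma=\phi(\mathbb Z^2)$, with $\epsilon'$ so small that $\epsilon'\max\{h(u_0),h(w_0)\}<\epsilon/2$: there is a finite union $Z$ of translates of proper algebraic subgroups of $\mathbb G_m^2$ (hence of dimension $\le 1$) with $\log\gcd(f(u_0^m),g(w_0^n))<\epsilon'\max\{m\,h(u_0),n\,h(w_0)\}$ for $(u_0^m,w_0^n)\notin Z$. Combining the two displays, every solution $(m,n)$ of the inequality of Theorem~\ref{lrsle} with $\max\{m,n\}$ large lies in $\phi^{-1}(Z)$; and since $\phi(\mathbb Z^2)$ is Zariski dense in $T$, each translate of a proper subgroup of $T$ pulls back to a coset in $\mathbb Z^2$ of rank $\le 1$, so $\phi^{-1}(Z)$ is a finite union of points and lines $\ell_1,\ldots,\ell_N$.

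\emph{Identifying the retained lines.}
It remains to show that a line $\ell\colon(m,n)=(at+b,ct+d)$ carrying infinitely many solutions has $ac\ne 0$ and is such that $F(a\bullet+b)$ and $G(c\bullet+d)$ have a nontrivial common factor. If $a=0$ then $m$ is constant on $\ell$, so $F(m)$ is a fixed algebraic number and $\log\gcd(F(m),G(n))=O(1)$, leaving only finitely many solutions on $\ell$; likewise if $c=0$. Hence $ac\ne 0$, and on $\ell$ the restrictions $F_\ell(t)=F(at+b)=\sum_i(c_i\alpha_i^b)(\alpha_i^a)^t$ and $G_\ell(t)=G(ct+d)=\sum_j(d_j\beta_j^d)(\beta_j^c)^t$ are again simple linear recurrences. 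If $F_\ell$ and $G_\ell$ were coprime — equivalently, if $f$ and $g$ restrict to coprime one‑variable polynomials on the one‑dimensional subtorus (up to translation) underlying the Zariski closure of $\phi(\ell)$ in $T$ — then rerunning the previous paragraph for this one‑variable problem (with the group generated by $\phi(\ell)$, whose closure is one‑dimensional, so the exceptional locus is a finite set of points) shows that all but finitely many $t$ give $\log\gcd(F_\ell(t),G_\ell(t))<\epsilon\max\{|at+b|,|ct+d|\}$, so only finitely many solutions lie on $\ell$. Therefore the retained lines are precisely those on which $F_\ell$ and $G_\ell$ share a nontrivial common factor, which is the asserted conclusion.

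\emph{Main obstacle.}
The hard part is the bridge between the analytic inequality and the subgroup‑theoretic output: proving the comparison in the second paragraph uniformly over the places of $S\setminus S_0$ (the co‑dominant‑cancellation case, which forces the induction on the number of summands), together with the equivalence used in the last paragraph between coprimeness of the restricted linear recurrences $F_\ell,G_\ell$ and coprimeness of the polynomials $f,g$ on the relevant one‑dimensional subtorus — it is exactly this equivalence that makes the retained lines coincide with the common‑factor lines.
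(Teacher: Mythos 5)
Your proposal founders on a single but decisive point: the claim that the Zariski closure $T$ of $\phi(\mathbb{Z}^2)$ is a product of two one\-dimensional tori, so that $\widetilde F(m)=\hat f(u_0^m)$ and $\widetilde G(n)=\hat g(w_0^n)$ for single elements $u_0,w_0$ and everything happens inside $\mathbb{G}_m^2$. The Zariski closure of the cyclic group generated by $\bigl(\alpha_2/\alpha_1,\ldots,\alpha_s/\alpha_1\bigr)$ is a subgroup of $\mathbb{G}_m^{s-1}$ whose dimension equals the rank of the multiplicative group generated by those ratios, which can be as large as $s-1$: for instance with $s=3$ and ratios $2,3$, the point $(2,3)$ satisfies no multiplicative relation, so the closure of the group it generates is all of $\mathbb{G}_m^2$, not a one\-dimensional torus. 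Hence in general $\widetilde F(m)$ is \emph{not} a one\-variable polynomial evaluated at powers of a single $u_0$; one must choose generators $u_1,\ldots,u_{r_1}$ (resp.\ $v_1,\ldots,v_{r_2}$) of the relevant root groups and write $F(m)=f(u_1^m,\ldots,u_{r_1}^m)$, $G(n)=g(v_1^n,\ldots,v_{r_2}^n)$ with genuinely multivariable (Laurent) polynomials, and then apply the G.C.D.\ inequality on a torus of dimension $r_1+r_2$ (or $2r$ when the two root groups are multiplicatively dependent). This is exactly why Theorem~\ref{gcdle} is needed in arbitrary dimension rather than only the bivariate Corvaja--Zannier theorem, and it is the setup used both in Levin's argument and in this paper's proofs of the analogous results (Theorems~\ref{recbad1}, \ref{recbad2}, \ref{recgood}), where the points $P_{m,n}=(m,u_1^m,\ldots,u_r^m,n,v_1^n,\ldots,v_s^n)$ live in a high\-dimensional torus.

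Because of this, the parts of your argument that look easy are only easy under the false rank\-one hypothesis: the coprimality of $\hat f,\hat g$ as one\-variable polynomials in disjoint variables, the assertion that each translate of a proper subgroup of $T\cong\mathbb{G}_m^2$ pulls back to a rank~$\le 1$ coset of $\mathbb{Z}^2$, and, in the last paragraph, the claim that the closure of $\phi(\ell)$ is one\-dimensional so the exceptional locus on a line is a finite set of points. In the true multi\-dimensional setting the analysis of the exceptional set is the heart of the proof: one must show that translates of proper subtori of $\mathbb{G}_m^{r_1+r_2}$ pull back (via multiplicative independence of the chosen generators) to the finitely many lines $(m,n)=(a_it+b_i,c_it+d_i)$, and then on each retained line one faces a fresh problem in which the restricted recurrences again have root groups of possibly large rank and the roots of $F$ and $G$ may be multiplicatively dependent, requiring a further descent (the paper handles the analogous step by covering $Z$ with a hypersurface, producing an exponential\-polynomial equation, and finishing with almost\-$S$\-unit equations, Skolem--Mahler--Lech, and Northcott; Levin works directly with the subgroup\-translate structure). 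Your sketch of the places in $S\setminus S_0$ (the paper's Lemma~\ref{rec1}) and of the equivalence between coprimality of restricted recurrences and of the associated Laurent polynomials points in the right direction, but as written the proof does not go through without replacing the $\mathbb{G}_m^2$ reduction by the full multivariable construction.
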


Grieve and Wang \cite{GW} have extended Theorem \ref{lrsle} to general linear recurrence sequences. 
\begin{Th}[Grieve, Wang \cite{GW}]\label{GW}
Let
$$\displaystyle F(m)=\sum_{i=1}^{s}p_{i}(m)\alpha_{i}^{m},$$
$$\displaystyle G(n)=\sum_{j=1}^{t}q_{j}(n)\beta_{j}^{n},$$
for $n \in \mathbb{N}$, be algebraic linear recurrence sequences, defined over a number field $k$, such that their roots generate together a torsion-free multiplicative subgroup $\Gamma$ of $k^{\times}$. Suppose that 
$$\displaystyle \max_{i,j} \{|\alpha_{i}|_{v},|\beta_{j}|_{v}\} \geq 1,$$
for any $v \in M_{k}$. Let $\epsilon >0$ and consider the inequality
\begin{align}\label{GWineq}
    \log \gcd(F(n),G(n)) < \epsilon\max\{m,n\}
\end{align}
for pairs of positive integers $(m,n) \in \mathbb{N}^{2}$. The following two assertions hold true.
\begin{enumerate}
\item Consider the case that $m=n$. If the inequality (\ref{GWineq}) is valid for infinitely many positive integers $(n,n) \in \mathbb{N}^{2}$, then $F$ and $G$ have a non-trivial common factor.
\item Consider the case that $m \neq n$. If the inequality (\ref{GWineq}) is valid for infinitely many pairs of positive integers $(m,n) \in \mathbb{N}^{2}$, with $m \neq n$, then the roots of $F$ and $G$ are multiplicatively dependent (see Def \ref{indroot}). Further, in this case, there exist finitely many pairs of integers $(a,b) \in \mathbb{Z}^{2}$ such that
$$|ma+nb|=o(\max\{m,n\}).$$
\end{enumerate}
\end{Th}
The proof of Theorem \ref{GW} in \cite{GW} is based on a ``moving targets" version of Theorem \ref{gcdle}. We will give an alternative proof of Theorem \ref{GW} and also give a quantitative improvement in which the error term $o(\max\{m,n\})$ can be controlled as a constant multiple of $\log\max\{m,n\}$.\\

In Section 3 and 4, we will give the proofs of the main Diophantine approximation results and the application to linear recurrence sequences, respectively.









\section{Preliminaries}
\subsection{Absolute values and heights}
Let $k$ be a number field, $M_{k}$ the set of places of $k$ and $\mathcal{O}_{k}$ the ring of integers of $k$. For $v \in M_{k}$, let $k_{v}$ denote the completion of $k$ with respect to $v$. Throughout the paper, we normalize the absolute value $|\cdot |_{v}$ corresponding to $v\in M_{k}$ as follows: If $v$ is archimedean and $\sigma$ is the corresponding embedding $\sigma:k \to \mathbb{C}$, then for $x \in k^{*}$, $|x|_{v}=|\sigma(x)|^{|k_{v}:\mathbb{R}|/|k:\mathbb{Q}|}$; if $v$ is non-archimedean corresponding to a prime ideal $\mathscr{P}$ in $\mathcal{O}_{k}$ which lies above a rational prime $p$, then it is normalized so that $|p|_{v}=p^{-|k_{v}:\mathbb{Q}_{p}|/|k:\mathbb{Q}|}$. In this notation, we have the product formula:
$$\displaystyle \prod_{v \in M_{k}}|x|_{v}=1$$
for all $x \in k^{*}$.

Let $S$ be a finite set of places in $M_{k}$. The ring of $S$-integers and the group of $S$-units are denoted by $\mathcal{O}_{k,S}$ and $\mathcal{O}_{k,S}^{*}$ respectively.\\

For a point $P=(\alpha_{0}:\alpha_{1}:\cdots:\alpha_{n}) \in \mathbb{P}^{n}(k)$, we define its height to be
$$\displaystyle h(P)=\sum_{v \in M_{k}}\log \max \{|\alpha_{0}|_{v},\ldots,|\alpha_{n}|_{v}\}$$
and for any $x \in k$, its height $h(x)$ is defined to be the height of the point $(1:x)$ in $\mathbb{P}^{1}(k)$.\\

We define local height functions as in \cite{BG}. Let $V$ be a projective variety over a number field $k$. Let $D$ be a Cartier divisor on $V$ and $v \in M_{k}$. First we define the support of a Cartier divisor $D=(U_{\alpha},f_{\alpha})_{\alpha \in I}$ to be 
$$\text{supp}(D):=\bigcup_{\alpha}\{x \in U_{\alpha}|f_{\alpha} \not \in \mathcal{O}_{V,x}^{*}\},$$
where $\mathcal{O}_{V,x}^{*}$ is the group of units in the local ring $\mathcal{O}_{V,x}$. We use $h_{D}:V(k) \to \mathbb{R}$ and $\lambda_{D,v}:\ V(k)\setminus \text{Supp}(D) \to \mathbb{R}$ to denote a height function associated to $D$ and local height function associated to $D$ and $v$ respectively, such that the local-global relation
$$\displaystyle \sum_{v \in M_{k}}\lambda_{D,v}(P)=h_{D}(P)+O(1)$$
is true of all points $P \in V(k)\setminus\text{Supp}(D)$. In particular, if $D$ is a hypersurface in $\mathbb{P}^{n}$ given by a homogeneous polynomial $F(x_{0},\ldots,x_{n})=0$ of degree $d$, we have a choice of local height function
$$\displaystyle \lambda_{D,v}(P)=\log \max_{i=0,\ldots,n} \dfrac{|\alpha_{i}|_{v}^{d}}{|F(P)|_{v}}=\log\dfrac{|P|_{v}^{d}}{|F(P)|_{v}}$$
where $P$ is written in coordinates $(\alpha_{0}:\cdots:\alpha_{n}) \in \mathbb{P}^{n}(k)\setminus \text{Supp}(D)$ and $|P|_{v}=\max_{i}|\alpha_{i}|_{v}$. For any $x\in k$ and $v\in M_{k}$, we define the local height of $x$ with respect to $v$ to be $\lambda_{v}(x)=\log\max\{1,|x|_v\}$.\\

For a point $P=(\alpha_{1},\ldots,\alpha_{n}) \in \mathbb{G}_{m}^{n}(k)$ and a place $v \in M_{k}$, we define its height and local height as
$$\displaystyle h(P)=\sum_{v \in M_{k}}\log \max \{1,|\alpha_{1}|_{v},\ldots,|\alpha_{n}|_{v}\}$$ 
and 
$$\lambda_{v}(P)=\log \max \{1,|\alpha_{1}|_{v},\ldots,|\alpha_{n}|_{v}\},$$ 
respectively.\\

A powerful tool in Diophantine Approximation is the famous Schmidt's Subspace Theorem, which will be the primary tool used in the proofs of this paper.
\begin{Th}[Schmidt's Subspace Theorem]
Let $k$ be a number field and $S \subset M_{k}$ a finite set of places, $n \in \mathbb{N}$ and $\epsilon > 0$. For every $v \in S$, let $\{L_{0}^{v},\ldots,L_{n}^{v}\}$ be a linearly independent set of linear forms in the variables $x_{0}, \ldots,x_{n}$ with coefficients in $k$. Then there are finitely many hyperplanes $T_{1},\ldots,T_{h}$ of $\mathbb{P}_{k}^{n}$ such that the set of solutions $\textbf{x}=(x_{0}:\cdots:x_{n}) \in \mathbb{P}_{k}^{n}(k)$ of
$$\displaystyle \sum_{v \in S}\log\prod_{i=0}^{n}\dfrac{|\textbf{x}|_{v}}{|L_{i}^{v}(\textbf{x})|_{v}}\geq (n+1+\epsilon)h(\textbf{x})+O(1)$$
is contained in $T_{1}\cup\cdots\cup T_{h}$. If we take $D_{v}$ to be the sum of divisors defined by $L_{i}^{v}$, $i=0,
\ldots, n$ and let $K_{\mathbb{P}^{n}}$ be the canonical divisor of $\mathbb{P}^{n}$, then this inequality can be written as
$$\displaystyle \sum_{v \in S}\lambda_{D_{v},v}(\textbf{x})+h_{K_{\mathbb{P}^{n}}}(\textbf{x}) \geq \epsilon h(\textbf{x})+O(1).$$
\end{Th}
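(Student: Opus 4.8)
The final statement is Schmidt's Subspace Theorem itself (in its $S$-adic, number-field form due to Schlickewei), so a genuinely self-contained proof is monograph-length; what I can lay out here is the plan of the classical Thue--Siegel--Roth--Schmidt method. Note first that the two displayed inequalities are equivalent: since $h_{K_{\mathbb{P}^n}} = -(n+1)h + O(1)$ and, by the chosen local height, $\lambda_{D_v,v}(\mathbf{x}) = \sum_{i=0}^n \log\bigl(|\mathbf{x}|_v/|L_i^v(\mathbf{x})|_v\bigr)$, it suffices to treat the first form. The plan is to assume, for contradiction, that there is an infinite set $\Sigma$ of solutions $\mathbf{x}\in\mathbb{P}^n(k)$ of $\sum_{v\in S}\log\prod_{i=0}^n |\mathbf{x}|_v/|L_i^v(\mathbf{x})|_v \ge (n+1+\epsilon)h(\mathbf{x}) + O(1)$ not contained in any finite union of hyperplanes, and to derive a contradiction.

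\textbf{Step 1 (reductions).} By enlarging $k$ and $S$ I may assume the coefficients of every $L_i^v$ lie in $\mathcal{O}_k$, and after a $k$-linear change of coordinates that for each fixed $v\in S$ most of the $L_i^v$ coincide with the coordinate forms $x_i$. Reformulating the inequality in terms of the successive minima of the parallelepipeds cut out by the $L_i^v$, and passing to an infinite subsequence of $\Sigma$, I may assume that $h(\mathbf{x})$ grows as rapidly as I wish along the subsequence and that the ``approximation type'' — which forms are small at which places, and by how much, encoded in a rational matrix — is (essentially) constant along it.

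\textbf{Step 2 (auxiliary polynomial via Siegel's Lemma).} Fix a large integer $m$ and pick $m$ solutions $\mathbf{x}^{(1)},\dots,\mathbf{x}^{(m)}$ from the subsequence whose heights $H_h := h(\mathbf{x}^{(h)})$ increase extremely fast (each $H_{h+1}$ an enormous multiple of $H_h$). Choose degrees $d_1,\dots,d_m$ with $d_h H_h$ roughly independent of $h$. Using Siegel's Lemma over $\mathcal{O}_k$ (equivalently Minkowski's convex body theorem), construct a nonzero multihomogeneous polynomial $P(\mathbf{X}^{(1)},\dots,\mathbf{X}^{(m)})$, of degree $d_h$ in the block $\mathbf{X}^{(h)}$, with coefficients in $\mathcal{O}_k$ of small logarithmic size $h(P)\le c_1\sum_h d_h$, vanishing to a prescribed multi-index at the point $(\mathbf{x}^{(1)},\dots,\mathbf{x}^{(m)})$ relative to the filtration by the $L_i^v$. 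The bookkeeping (unknowns versus linear conditions) is arranged so that the guaranteed index is bounded below by a fixed positive quantity depending only on $n$.

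\textbf{Step 3 (index lower bound from good approximation) and Step 4 (Roth lemma + conclusion).} Evaluating low-order derivatives $\partial^{\mathbf{i}}P$ at $(\mathbf{x}^{(1)},\dots,\mathbf{x}^{(m)})$ and invoking the product formula, the hypothesis $\sum_{v\in S}\log\prod_i |\mathbf{x}^{(h)}|_v/|L_i^v(\mathbf{x}^{(h)})|_v \ge (n+1+\epsilon)h(\mathbf{x}^{(h)})$ forces these derivatives to vanish as well — a nonzero value would be an algebraic number too small archimedeanly to satisfy the product formula given how small $h(P)$ is — so the index of $P$ at the point (weighted by $1/d_h$) is at least some $\theta = \theta(n,\epsilon) > 0$ independent of $m$. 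On the other hand the \emph{generalized Roth lemma} — in Schmidt's original inductive form, or through Dyson's lemma / the Esnault--Viehweg inequality, or Faltings' Product Theorem — bounds the index of a nonzero polynomial of small height at a point with rapidly growing coordinate heights by some $\epsilon'(m)$ with $\epsilon'(m)\to 0$ as $m\to\infty$. Taking $m$ large with $\epsilon'(m) < \theta$ yields a contradiction, so $\Sigma$ must lie in finitely many hyperplanes; a descending induction on $n$, applied to each such hyperplane (where the restricted forms satisfy a companion inequality) together with a gap/compactness argument, upgrades this to the stated conclusion that all solutions lie in a controlled finite union $T_1\cup\dots\cup T_h$.

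\textbf{Main obstacle.} The single hard ingredient is the many-block non-vanishing statement in Step 4: producing an upper bound on the index of $P$ that beats the fixed lower bound $\theta$, uniformly as the number of blocks $m\to\infty$. Schmidt's inductive treatment of this (and the later Dyson-lemma and product-theorem approaches) is exactly where the depth of the Subspace Theorem lies; Siegel's Lemma, the product-formula propagation of vanishing, and the reductions are comparatively routine once it is available.
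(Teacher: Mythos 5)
The paper does not actually prove this statement: Schmidt's Subspace Theorem is quoted as a known black box (in the $S$-adic, number-field form due to Schmidt and Schlickewei) and is the external input on which the paper's own results rest, so there is no in-paper proof to compare yours against. Your outline is a faithful map of the classical Thue--Siegel--Roth--Schmidt strategy, and the preliminary observation is correct: the two displayed inequalities are equivalent because $h_{K_{\mathbb{P}^n}}=-(n+1)h+O(1)$ and the chosen local height gives $\lambda_{D_v,v}(\mathbf{x})=\sum_{i=0}^{n}\log\bigl(|\mathbf{x}|_v/|L_i^v(\mathbf{x})|_v\bigr)$.

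As a proof, however, it has a genuine gap, which you yourself flag: everything rests on the index upper bound in Step 4 --- Schmidt's inductive non-vanishing theorem (equivalently a many-variable Roth lemma, Dyson/Esnault--Viehweg, or Faltings' Product Theorem) for multihomogeneous polynomials at points whose block heights grow rapidly --- and this is asserted, not established. Several other steps are also only named rather than carried out: the Step 1 reduction to an essentially constant ``approximation type'' needs a partition/pigeonhole argument over the places of $S$ and the exponent simplex; the claim in Steps 2--3 that the index lower bound $\theta$ can be made independent of $m$ is exactly where the $(n+1+\epsilon)$ versus $(n+1)$ margin and the weighting by $1/d_h$ must be balanced, and the product-formula propagation of vanishing requires estimates at all places, not just the archimedean ones; and the final passage from ``solutions of large height on a subsequence lie in finitely many hyperplanes'' to the stated conclusion needs the gap principle plus a descending induction on $n$. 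So what you have is a correct high-level plan of the standard proof, not a proof; for the purposes of this paper the theorem is appropriately cited to the literature (Schmidt, Schlickewei; see also Bombieri--Gubler) rather than reproved.
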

If $H_{i}^{v}$ is the hyperplane defined by $L_{i}^{v}$, then the left-hand side of the inequality may be written as $\displaystyle \sum_{v \in M_{k}} \sum_{i=0}^{n} \lambda_{H_{i}^{v},v}(x)$ up to $O(1)$.

\subsection{Generalized Greatest Common Divisors}\label{pregcd}
One can extend the notion of $\log \gcd (a,b)$ to all algebraic numbers. Note that for $a$ and $b$ integers, we calculate their greatest common divisor as:
$$\displaystyle \begin{aligned}
\log \gcd (a,b)&=\sum_{p \  \text{prime}}\min\{\text{ord}_{p}(a),\text{ord}_{p}(b)\}\log p\\
&=-\sum_{v \in M_{\mathbb{Q},\text{fin}}}\log \max \{|a|_{v},|b|_{v}\}\\
&=-\sum_{v \in M_{\mathbb{Q},\text{fin}}}\log^{-} \max\{|a|_{v},|b|_{v}\}
\end{aligned}$$
where $M_{\mathbb{Q},\text{fin}}$ is the set of nonarchimedean places of $\mathbb{Q}$ and $\log^{-}z=\min\{0,\log z\}$.  Similarly we define  $\log^{+}z=\max\{0,\log z\}$. With this observation, by adding contributions of archimedean places, the generalized greatest common divisor is defined as:
\begin{Def}
Let $a, b \in \bar{\mathbb{Q}}$ be two algebraic numbers, not both zero. We define the generalized logarithmic greatest common divisors of $a$ and $b$ by
$$\displaystyle \log \gcd (a,b)=-\sum_{v \in M_{k}}\log^{-} \max\{|a|_{v},|b|_{v}\}$$
where $k$ is any number field containing both $a$ and $b$. 
\end{Def}

\subsection{Linear recurrence sequences}
Here we give some basic definitions and results involving linear recurrence sequences.\\

\begin{Def}\label{lrs}
A linear recurrence is a sequence $a=(a(i))$ of complex numbers satisfying a homogeneous linear recurrence relation
$$a(i+n)=s_{1}a(i+n-1)+\cdots+s_{n-1}a(i+1)+s_{n}a(i),\; i \in \mathbb{N}$$
with constant coefficients $s_{j} \in \mathbb{C}$. 
\end{Def}

\begin{Def}
The polynomial 
$$f(X)=X^{n}-s_{1}X^{n-1}-\cdots -s_{n-1}X-s_{n}$$
associated to the relation in Definition $\ref{lrs}$ is called its characteristic polynomial and the roots of this polynomial are said to be its roots. 
\end{Def}

\begin{Def}\label{gps}
A generalized power sum is a finite polynomial-exponential sum
$$\displaystyle a(i)=\sum_{j=1}^{m}A_{j}(i)\alpha_{j}^{i},\; i \in \mathbb{N}$$
with polynomial coefficients $A_{j}(z) \in \mathbb{C}[z]$. The $\alpha_{j}$ are the roots of the sequence $a(i)$.
\end{Def}

It is a well-known fact that every linear recurrence sequence $a(x)$ can be written in the form of a generalized power sum and in fact these two forms are equivalent, see \cite{EPSW}. Throughout this paper, linear recurrence sequences are presented in the form of a generalized power sum.\\

The linear recurrence sequence $a(i)$ is called degenerate if it has a pair of distinct roots whose ratio is a root of unity. Otherwise, it is called non-degenerate.\\

Fix a number field $k$. Let us define two linear recurrence sequences $F(n)$ and $G(n)$ by generalized power sums
$$\displaystyle F(n)=\sum_{i=1}^{m}A_{i}(n)\alpha_{i}^{n}$$
$$\displaystyle G(n)=\sum_{i=1}^{l}B_{i}(n)\beta_{i}^{n}$$
where $A(n)$, $B(n)$ are polynomials over $k$ and $\alpha_{i}$ and $\beta_{i}$ are roots in $k^{*}$. Let $\Gamma$ be the multiplicative group generated by all $\alpha_{i}$ and $\beta_{i}$ with a set of generators $\{u_{1},\ldots,u_{r}\}$. Then we can write $F(n)$ and $G(n)$ as
$$F(n)=f(n,u_{1}^{n},\ldots,u_{r}^{n})$$
$$G(n)=g(n,u_{1}^{n},\ldots,u_{r}^{n})$$
where $f$ and $g$ are rational functions in $x_{0},\ldots,x_{r}$ of the form:
$$f(x_{0},\ldots,x_{r})=\dfrac{\tilde{f}(x_{0},\ldots,x_{r})}{x_{1}^{a_{1}}\cdots x_{r}^{a_{r}}}$$
$$g(x_{0},\ldots,x_{r})=\dfrac{\tilde{g}(x_{0},\ldots,x_{r})}{x_{1}^{b_{1}}\cdots x_{r}^{b_{r}}}$$
with $\tilde{f}$, $\tilde{g}$ polynomials, i.e., $f,g\in k[x_{0},x_{0}^{-1},\ldots, x_{r},x_{r}^{-1}]$ are Laurent polynomials. In particular, the ring of such Laurent polynomials is a localization of $k[x_{0},\ldots,x_{r}]$, so it is a UFD.\\

It is obvious that linear recurrence sequences are closed under term-wise sum and product from the generalized power sum point of view, hence we can talk about the sum and product of two recurrence sequences. Let $\mathcal{H}_{\Gamma}(k)$ be the ring of linear recurrence sequences whose coefficient polynomials are over $k$ and roots belonging to a torsion-free multiplicative group $\Gamma\subset k^{*}$. We say $F(n),G(n) \in \mathcal{H}_{\Gamma}(k)$ are coprime if there does not exist a non-unit $H(n) \in \mathcal{H}_{\Gamma}(k)$ such that $F(n)=H(n)F_{0}(n)$ and $G(n)=H(n)G_{0}(n)$ with $F_{0}(n),G_{0}(n) \in \mathcal{H}_{\Gamma}(k)$. Recall that for $F(n),G(n )\in \mathcal{H}_{\Gamma}(k)$ and a choice of generators of the torsion-free group $\Gamma$, there are associated Laurent polynomials $f$ and $g$ respectively; if two such recurrence sequences are coprime then the two associated Laurent polynomials are also coprime.\\

We also need a well-known theorem on the structure of the zeros of a linear recurrence:\\
\begin{Th}[Skolem-Mahler-Lech]\label{SML}
The set of indices of the zeros of a linear recurrence sequence comprises a finite set together with a finite number of arithmetic progressions. If the linear recurrence sequence is nondegenerate, then there are only finitely many zeros.
\end{Th}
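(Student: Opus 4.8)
The plan is to prove this by $p$-adic analysis, following the classical argument of Skolem, Mahler and Lech. First I would reduce to the algebraic (indeed number-field) case: the roots $\alpha_1,\dots,\alpha_m$ and the coefficients of the polynomials $A_1,\dots,A_m$ defining the generalized power sum $a(i)=\sum_{j=1}^m A_j(i)\alpha_j^i$ generate a finitely generated extension of $\mathbb{Q}$, and a standard specialization argument (replacing the finitely many algebraically independent elements among the data by suitable algebraic values, chosen so as not to create new algebraic relations, hence preserving the set of zero indices) lets us assume $a$ is defined over a number field $k$; we may also assume $a$ is not the identically zero sequence. Now choose a non-archimedean place $v$ of $k$, of residue characteristic $p$, lying outside the finite set of places at which some $\alpha_j$ fails to be a unit; write $\mathcal{O}_v$ for the valuation ring, $\mathfrak{m}_v$ for its maximal ideal, and $q$ for the cardinality of the residue field. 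Then $\alpha_1,\dots,\alpha_m\in\mathcal{O}_v^{\times}$.

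The heart of the argument is to interpolate the subsequences of $a$ along arithmetic progressions by $v$-adic analytic functions. Since each $\alpha_j$ is a unit, $\alpha_j^{q-1}\in 1+\mathfrak{m}_v$, and raising further to an appropriate power of $p$ one finds an integer $N$ (a multiple of $q-1$ by a suitable power of $p$) such that, for every $j$, the element $\alpha_j^N-1$ is so close to $0$ that the binomial series $(1+(\alpha_j^N-1))^x=\sum_{k\ge 0}\binom{x}{k}(\alpha_j^N-1)^k$ converges for all $x$ in $\mathbb{Z}_p$. For each residue class $r\in\{0,1,\dots,N-1\}$ I would then set
\[
f_r(x)\;=\;\sum_{j=1}^{m} A_j(Nx+r)\,\alpha_j^{\,r}\,\bigl(1+(\alpha_j^N-1)\bigr)^{x},
\]
a finite sum each of whose terms is a polynomial in $x$ times a convergent power series in $x$, hence itself a $v$-adic analytic function on $\mathbb{Z}_p$, and by construction $f_r(n)=a(Nn+r)$ for every integer $n\ge 0$.

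Next I would invoke Strassmann's theorem: a $v$-adic power series converging on $\mathbb{Z}_p$ and not identically zero has only finitely many zeros in $\mathbb{Z}_p$. Applied to $f_r$ this yields, for each $r$, a dichotomy: either $f_r\equiv 0$, in which case $a(n)=0$ for every $n\equiv r\pmod N$; or $f_r$ has finitely many zeros, in which case only finitely many $n\equiv r\pmod N$ satisfy $a(n)=0$ (here one uses that $\mathbb{Z}_{\ge 0}$ is dense in $\mathbb{Z}_p$, so infinitely many integer zeros would force $f_r\equiv 0$). Taking the union over the $N$ residue classes shows the set of zero indices of $a$ is a finite union of full arithmetic progressions together with a finite set, which is the first assertion. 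For the non-degenerate case, suppose toward a contradiction that some $f_r\equiv 0$. Then the sequence $n\mapsto a(Nn+r)=\sum_j\bigl(A_j(Nn+r)\alpha_j^{\,r}\bigr)(\alpha_j^N)^n$ vanishes identically. Non-degeneracy means no ratio $\alpha_i/\alpha_j$ with $i\ne j$ is a root of unity, so in particular the $\alpha_j^N$ are pairwise distinct; by linear independence of the functions $n\mapsto n^k\gamma^n$ over pairwise distinct $\gamma$, the identical vanishing forces $A_j(Nn+r)=0$ for all $n\ge 0$, hence each $A_j$ is the zero polynomial and $a\equiv 0$, contradicting our reduction. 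So no $f_r$ vanishes identically and the zero set is finite.

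The main obstacle — essentially the only non-elementary ingredient — is Strassmann's theorem together with the technical point underlying the interpolation: the place $v$ and the modulus $N$ must be chosen so that every $\alpha_j^N$ is close enough to $1$ for the binomial series $(1+(\alpha_j^N-1))^x$ to converge on all of $\mathbb{Z}_p$ (this forces $v(\alpha_j^N-1)>v(p)/(p-1)$, whence the power of $p$ dividing $N$), and one must check that a polynomial in $x$ times such a series is still a genuine convergent power series to which Strassmann applies. Everything else — the reduction to a number field and the bookkeeping in the non-degenerate case — is routine.
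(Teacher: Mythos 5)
The paper does not prove this statement at all: it is quoted as a classical result (Skolem--Mahler--Lech), so the only comparison available is with the standard literature proof, which is exactly the $p$-adic interpolation plus Strassmann argument you give. Your main argument is correct and complete on its essential points: choosing $v$ of residue characteristic $p$ with all $\alpha_j\in\mathcal{O}_v^{\times}$, taking $N$ a multiple of $q-1$ times a power of $p$ so that $|\alpha_j^N-1|_v<p^{-1/(p-1)}$, writing $(\alpha_j^N)^x=\exp\bigl(x\log\alpha_j^N\bigr)$ as a power series in $x$ with coefficients tending to $0$, noting that multiplying by the polynomials $A_j(Nx+r)$ preserves this, applying Strassmann's theorem on each residue class, and in the nondegenerate case using that the $\alpha_j^N$ stay pairwise distinct together with linear independence of $n\mapsto n^k\gamma^n$ to rule out $f_r\equiv 0$ (given the harmless normalization $a\not\equiv 0$). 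This is the same proof any standard reference would supply.

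The one step that does not work as written is your reduction from $\mathbb{C}$ to a number field. You propose specializing the algebraically independent elements among the data to algebraic values ``chosen so as not to create new algebraic relations, hence preserving the set of zero indices''; but sending transcendental elements to algebraic numbers necessarily creates new algebraic relations, and a ring homomorphism only preserves the vanishing $a(i)=0$, not the non-vanishing $a(i)\neq 0$, so specialization can enlarge the zero set at infinitely many indices and the structure of the enlarged set says nothing about the original one. The correct classical tool (Lech, via an embedding theorem of Cassels) is an \emph{injective} field embedding: a finitely generated extension of $\mathbb{Q}$ embeds into $\mathbb{Q}_p$ for infinitely many $p$, and $p$ can be chosen so that the finitely many roots $\alpha_j$ map to $p$-adic units; injectivity preserves the zero set exactly, and then your argument runs verbatim in $\mathbb{Q}_p$. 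Note also that in this paper the theorem is only ever applied to recurrences already defined over a number field $k$ (with roots in $k^{*}$), so for the applications here the reduction is unnecessary and your $p$-adic argument alone suffices; but as a proof of the general statement you should replace the specialization step by the embedding argument.
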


\subsection{Almost $S$-units and almost $S$-unit equations}
The definition of almost $S,\delta$-units was already given as in Definition \ref{asu}. Here are some remarks about this definition and its properties.

\begin{Rem}
Silverman has defined ``quasi-$S$-integers" in \cite{SilSInt}. For a number field $k$, a finite set of places $S$ and $\epsilon >0$, the set of quasi-$S$-integers are defined as
$$R_{S}(\epsilon):=\{x\in k: \sum_{v \in S}\max\{|x|_{v},0\}\geq \epsilon h(x)\}.$$
Silverman's notion of quasi-$S$-integers can be compared with our notion of almost $(S,\delta)$-units as follows: if $x \in k_{S,1-\epsilon}$ then $x\in R_{S}(\epsilon)$, and if $x\in R_{S}(\epsilon)$ then $x\in k_{S,2-\epsilon}$.
\end{Rem}

\begin{Rem}
In Evertse's work \cite{Ev2}, he defined, for constants $c,d$ with $c>0, d \geq 0$, a $(c,d,S)$-admissible point $P=(x_0:\ldots:x_n) \in \mathbb{P}(k)$ should satisfy:
\begin{enumerate}
    \item all $x_i$ can be chosen as $S$-integers.
    \item $\displaystyle \prod_{v\in S}\prod_{i=0}^n |x_i|_v \leq c \cdot H(P)^d$.
\end{enumerate}
Clearly, this also generalizes the notions of $S$-units and $S$-integers. Indeed, $(1,0,S)$-admissible points can be chosen to be all $S$-units. In our definition, if a $n$-tuple $P$ is in $k_{S,\delta}^{n}$, we can find some suitable $c,d$ such that $P$ is $(c,d,S)$-admissible. But we shall note that the other implication is not true in general.
\end{Rem}

\begin{Rem}\label{gmn}
We note that $k_{S,\delta}^{n} \subset \mathbb{G}_{m}^{n}(k)_{S,n\delta}$ and when $\delta=0$ we recover $n$-tuples of $S$-units, $G_{m}^{n}(k)_{S,0}=(\mathcal{O}_{k,S}^*)^{n}$.
\end{Rem}

\begin{Rem}\label{stand}
We use projective height to define almost $S$-units in $\mathbb{G}_{m}^{n}(k)_{S,\delta}$. In other references standard height is frequently used, where for a point $P=(x_{1},\ldots,x_{n}) \in \mathbb{G}_{m}^{n}(k)_{S,\delta}$, 
$$\displaystyle h_{stand}(P):=\sum_{i=1}^{n}h(x_{n}).$$ 
Local heights are defined similarly as 
$$\displaystyle \lambda_{stand,v}(P):=\sum_{i=1}^{n}\lambda_{v}(x_{n}).$$ 
One can verify that if $P \in \mathbb{G}_{m}^{n}(k)$ is an $(S,\delta)$-unit under the projective height, then it is an $(S,n\delta)$-unit under the standard height. Indeed, in this case we have
$$\begin{aligned}
\sum_{v \not \in S}\lambda_{stand,v}(P)+\lambda_{stand,v}(1/P)&=\sum_{v \not \in S}(\sum_{i=1}^{n}\lambda_{v}(x_{i})+\lambda_{v}(1/x_{i}))\\
&\leq n\sum_{v \not \in S}\lambda_{v}(P)+\lambda_{v}(1/P)\\
&\leq n\delta h(P) \leq n\delta h_{stand}(P).
\end{aligned}$$
\end{Rem}

Before the main proof, we need a generalized version of the unit equation. Evertse proved \cite[Theorem 1]{Ev2} a finiteness result on unit equations with $c,d,S$-admissible points using Subspace theorem. Here we state the theorem under the definition of almost $(S,\delta)$-units as a special case of Evertse's theorem.

\begin{Lemma}\label{unieq}
Let $k$ be a number field and let $S$ be a finite set of places of $k$ containing all archimedean places. Let $0<\delta <1/((n+1)(n+2))$. Let $\chi$ be the set of solutions of \\ 
$$x_{0}+ \cdots +x_{n}=1,\ (x_{0}, \ldots, x_{n}) \in k_{S,\delta}^{n+1}, $$
such that no proper subsum of $x_{0}+ \cdots +x_{n}$ vanishes. Then $\chi$ is a finite set.\\
\end{Lemma}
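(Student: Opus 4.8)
The plan is to deduce Lemma \ref{unieq} from the classical $S$-unit equation theorem by showing that any almost-$(S,\delta)$-unit solution of $x_0 + \cdots + x_n = 1$ is, up to a controlled error, an honest $S$-unit solution, and then packaging the error into an enlarged but still finite set of places. First I would recall the quantitative form of the $S$-unit equation theorem (which itself follows from Schmidt's Subspace Theorem, available above): for a fixed finite set $S'$ of places of $k$, the equation $y_0 + \cdots + y_n = 1$ with $y_i \in \mathcal{O}_{k,S'}^*$ and no vanishing proper subsum has only finitely many solutions. So the real content is to control the non-$S$ part of the $x_i$.

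The key step is a counting/pigeonhole argument on the places outside $S$. For a solution $\mathbf{x} = (x_0,\ldots,x_n) \in \chi$, set $h = \max_i h(x_i)$; by the definition of $k_{S,\delta}$ each $x_i$ satisfies $h_{\bar S}(x_i) = \sum_{v\notin S}(\lambda_v(x_i) + \lambda_v(1/x_i)) \leq \delta h(x_i) \leq \delta h$. Let $T(\mathbf{x})$ be the set of places $v \notin S$ at which some $x_i$ or some $1/x_i$ has $|\cdot|_v > 1$. At each such $v$, the contribution $\lambda_v(x_i) + \lambda_v(1/x_i) \geq \frac{\log 2}{[k:\mathbb{Q}]}$ is bounded below (or more simply $\geq$ some positive constant depending only on $k$, since $v$ is non-archimedean and $\log|x|_v$ takes values in a lattice), so $|T(\mathbf{x})| \leq c(k)\,\delta h$ for an explicit constant $c(k)$. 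I would then treat $S' = S \cup T(\mathbf{x})$: with respect to $S'$, all the $x_i$ are $S'$-units, so the $S'$-unit equation theorem applies — but $S'$ depends on $\mathbf{x}$, so one cannot directly conclude finiteness. Instead, the standard device is to use the \emph{quantitative} $S$-unit theorem, which bounds the number of solutions in terms of $\mathrm{rank}(\mathcal{O}_{k,S'}^*) \leq |S'| - 1 \leq |S| - 1 + c(k)\delta h$ and, crucially, the \emph{heights} of the solutions; combined with a gap/height-comparison argument, the bound $|T(\mathbf{x})| = O(\delta h)$ with $\delta$ small forces $h$ to be bounded, hence $\chi$ is finite.

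Alternatively — and this is probably the cleaner route that matches the hypothesis $\delta < 1/((n+1)(n+2))$ — I would apply Schmidt's Subspace Theorem directly to $\mathbf{x}$ viewed in $\mathbb{P}^n$. Take the linear forms $x_0, \ldots, x_n$ together with $x_0 + \cdots + x_n$; for each $v \in S$, choose the $n+1$ forms among these that are "large" at $v$ (using the almost-unit condition to see that at places outside $S$ the forms are controlled), apply the Subspace Theorem, and conclude that the solutions lie in finitely many hyperplanes. On each such hyperplane one gets a linear relation among the $x_i$; combined with the no-vanishing-subsum hypothesis, this reduces the dimension and one finishes by induction on $n$. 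The condition $\delta < 1/((n+1)(n+2))$ is exactly what is needed so that the total "error" from the non-$S$ places — there are roughly $n+1$ forms and $n+2$ including the sum, contributing at most $(n+1)(n+2)\delta h$ to the relevant sum — stays below the $\epsilon h$ threshold in the Subspace Theorem, leaving a genuine contradiction.

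The main obstacle is handling the set $T(\mathbf{x})$ of bad places, which varies with the solution: one must either absorb it into the Subspace Theorem estimate by checking that at each $v \notin S$ the product $\prod_i |\mathbf{x}|_v / |L_i(\mathbf{x})|_v$ contributes at most $h_{\bar S}(\mathbf{x}) \leq (n+1)\delta h$ (so summing over all such places and all forms gives at most $(n+1)(n+2)\delta h$), or run the quantitative $S$-unit machinery with a variable $S'$ and extract a height bound. I expect the first approach to work: the almost-unit hypothesis is precisely designed so that the non-$S$ contributions to the Subspace Theorem inequality are $O(\delta h)$, and the threshold $\delta < 1/((n+1)(n+2))$ makes this strictly smaller than what the Subspace Theorem forbids, yielding finiteness after the usual induction on subsums.
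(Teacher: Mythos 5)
Your second route --- applying Schmidt's Subspace Theorem directly to the forms $x_0,\ldots,x_n$ and $x_0+\cdots+x_n$ on $\mathbb{P}^n$, observing that the almost-unit condition bounds the total non-$S$ contribution by $(n+1)(n+2)\delta h$, so that $\delta<1/((n+1)(n+2))$ leaves a genuine gap against the $(n+1+\epsilon)h$ threshold, and then finishing by induction on vanishing subsums as in the standard unit equation --- is exactly the paper's proof. The proposal is correct and takes essentially the same approach (your first, quantitative-$S'$-unit route is not needed and is not the one the paper uses).
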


\begin{proof}
See \cite{Ev2}.
\end{proof}

\begin{Cor}\label{unieqc}
Let $0<\delta <1/((n+1)(n+2))$. Let $\chi$ be the set of solutions of
$$x_{0}+ \cdots +x_{n}=1$$
such that $(x_{0}, \ldots, x_{n}) \in  k_{S,\delta}^{n+1}$. Then there is a finite set $\mathcal{F} \subset k^{*}$ such that every $\textbf{x} \in \chi$ has at least one coordinate in $\mathcal{F}$.\\
\end{Cor}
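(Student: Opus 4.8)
The plan is to deduce Corollary \ref{unieqc} from Lemma \ref{unieq} by the standard device of peeling off vanishing subsums. First I would argue by induction on $n$. For the base case $n=1$, the equation is $x_0 + x_1 = 1$ with $(x_0,x_1)\in k_{S,\delta}^2$; here no proper subsum can vanish (a single $x_i$ is a unit, hence nonzero), so Lemma \ref{unieq} applies directly and gives a finite set $\chi$, and we may simply take $\mathcal{F}$ to be the (finite) set of all coordinates of all solutions.

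For the inductive step, given a solution $\mathbf{x}=(x_0,\ldots,x_n)\in\chi$, I would consider a minimal vanishing subsum. If the only vanishing subsum is the empty one — i.e. no proper subsum of $x_0+\cdots+x_n$ vanishes — then $\mathbf{x}$ lies in the finite set furnished by Lemma \ref{unieq}, and again all its coordinates may be thrown into $\mathcal{F}$. Otherwise there is a nonempty proper subset $I\subsetneq\{0,\ldots,n\}$ with $\sum_{i\in I}x_i = 0$ and, shrinking $I$ if necessary, with no proper subsum of $\sum_{i\in I}x_i$ vanishing. Pick $j\in I$ and divide the relation $\sum_{i\in I}x_i=0$ by $-x_j$: writing $y_i = -x_i/x_j$ for $i\in I\setminus\{j\}$, we obtain $\sum_{i\in I\setminus\{j\}}y_i = 1$. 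Since $k_{S,\delta}$ is not a group I cannot immediately say each $y_i\in k_{S,\delta}$; instead I would note that quotients of almost $S$-units are ``almost'' almost $S$-units: one checks $h_{\bar S}(x_i/x_j)\le h_{\bar S}(x_i)+h_{\bar S}(x_j)\le \delta(h(x_i)+h(x_j))$ while $h(x_i/x_j)$ can be as small as one likes, so this naive bound is not enough. The cleaner route, which I expect to be the intended one, is to keep working projectively: the point $(x_i)_{i\in I}\in\mathbb{P}^{|I|-1}$ together with the extra linear form $\sum_{i\in I}x_i$ again puts us in a Subspace-Theorem situation with the weaker hypothesis $\mathbf{x}\in\mathbb{G}_m^{|I|}(k)_{S,(n+1)\delta}$, exactly as in the proof of Lemma \ref{unieq}; since $|I|\le n$ and $\delta < 1/((n+1)(n+2)) \le 1/((|I|)(|I|+1))$ after possibly shrinking, the same argument (or the induction hypothesis applied to the subsum equation normalized by $x_j$) shows that the $x_i/x_j$, $i\in I$, take finitely many values — call the resulting finite set of values $\mathcal{F}_I$.

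Given this, I would finish as in \cite[Theorem 7.4.2]{BG}: with the ratios among $\{x_i : i\in I\}$ fixed, the vanishing relation $\sum_{i\in I}x_i=0$ lets us substitute and collapse $x_0+\cdots+x_n=1$ into a shorter unit equation in the variables indexed by $\{0,\ldots,n\}\setminus I$ (with the surviving coefficients now lying in the finite set $\mathcal{F}_I$, absorbed into a finitely-generated coefficient set), which has strictly fewer than $n+1$ terms; the inductive hypothesis then yields a finite set containing at least one coordinate of that reduced solution, and tracing back through the fixed ratios shows at least one original coordinate $x_i$ lies in a finite set. Taking $\mathcal{F}$ to be the (finite) union over all finitely many choices of $I$, of $\mathcal{F}_I$-translates of the finitely many outputs of the inductive step, together with the coordinates from the ``no vanishing subsum'' case, gives the claim. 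The main obstacle is precisely the bookkeeping in the previous paragraph: $k_{S,\delta}$ is not closed under division, so one cannot blithely reduce to Lemma \ref{unieq} for proper subsums by normalizing, and one must either re-run the projective Subspace-Theorem estimate for each subsum (noting the constant $(n+1)(n+2)$ is chosen large enough to cover all sub-lengths) or set up the induction so that it is on the number of terms with the ambient group allowed to be $\mathbb{G}_m^{m}(k)_{S,(n+1)\delta}$ for the fixed original $n$ throughout.
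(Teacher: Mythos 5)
Your skeleton — induct on the number of terms, split according to whether a proper subsum of $x_{0}+\cdots+x_{n}$ vanishes, and invoke Lemma \ref{unieq} in the non-degenerate case — is exactly what the paper intends (its proof says no more than ``Lemma \ref{unieq} and induction''). The problem is the step where you try to pin down the ratios $x_{i}/x_{j}$ inside a vanishing subsum $\sum_{i\in I}x_{i}=0$. Your proposed fix, re-running the projective Subspace argument for the point $(x_{i})_{i\in I}\in\mathbb{P}^{|I|-1}$ ``together with the extra linear form $\sum_{i\in I}x_{i}$'', does not work: that linear form vanishes at the very point you are studying, so the corresponding local heights are undefined. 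In the proof of Lemma \ref{unieq} it is essential that $a_{0}+\cdots+a_{n}=1\neq 0$; this is what makes $\lambda_{H_{n+1},v}(P)=\log|P|_{v}$ and contributes the extra $h(P)$ needed to exceed the Subspace bound $(n+1+\epsilon)h(P)$. With only the $|I|$ coordinate forms available you get roughly $(|I|-O(\delta))h(P)$ on the left, which yields no contradiction. The normalization route fails for the reason you yourself identify ($k_{S,\delta}$ is not closed under division), so as written the ``$\mathcal{F}_{I}$'' step has no valid justification.

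Fortunately none of that is needed. If $\sum_{i\in I}x_{i}=0$ for a nonempty proper $I$, then the complementary subsum already satisfies $\sum_{i\notin I}x_{i}=1$; its entries still lie in $k_{S,\delta}$, it has $m+1\leq n$ terms, and $\delta<1/((n+1)(n+2))<1/((m+1)(m+2))$, so the induction hypothesis applies to it directly and produces a coordinate $x_{i}$, $i\notin I$, lying in a finite set — which is all the Corollary asks for. (Equivalently, and without induction: take $J\subseteq\{0,\ldots,n\}$ minimal with $\sum_{i\in J}x_{i}=1$; minimality forces no proper subsum of $\sum_{i\in J}x_{i}$ to vanish, and Lemma \ref{unieq} applied to this shorter equation makes $(x_{i})_{i\in J}$ range over a finite set. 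Taking the union over the finitely many subsets $J$ gives $\mathcal{F}$.) The ratios inside $I$ never need to be controlled, and your ``main obstacle'' evaporates; the collapsed equation has all coefficients equal to $1$, not coefficients from $\mathcal{F}_{I}$.
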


\begin{proof}

See \cite{Ev2}.

\end{proof}

Lemma \ref{unieq} and Corollary \ref{unieqc} together give the generalized unit equation for $k_{S,\delta}^{n}$, which allows us to obtain finiteness of solutions in several of the following theorems.

\section{Diophantine Approximation}
In this section, our main goal is to give the proof of Theorem \ref{combined}. \\

In the following we will use the notation $\textbf{u}$ and $\textbf{i}$ for $n$-tuples $(u_{1},\ldots,u_{n})$ and $(i_{1},\ldots,i_{n})$, respectively, with $|\textbf{i}|=i_{1}+\cdots+i_{n}$  and denote by $\textbf{u}^{\textbf{i}}$ the multi-variable monomial $u_{1}^{i_{1}}\cdots u_{n}^{i_{n}}$. Let $m$ be a positive integer. For a subset $T \subset k[x_{1},\ldots,x_{n}]$, we let
$$T_{m}=\{p \in T| \deg p \leq m\},$$
and
$$T_{[m]}=\{p \in T| p \text{ is homogeneous of degree } m\}.$$
For $f,g \in k[x_{1}, \ldots, x_{n}]$, we let
$$(f,g)_{(m)}=\{fp+gq|\deg fp ,\deg gq \leq m\},$$
where $\deg$ denotes the (total) degrees of the polynomials.\\

Before the proof, we need a combinatorial lemma.

\begin{Lemma}\label{comb}
Let $m$  be a positive integer. Let $I=\{\mathbf{i}=(i_{0},\ldots,i_{n})\}$ be the set of $(n+1)$-tuples in $\mathbb{N}^{n+1}$ with $i_{0}+\cdots+i_{n}=m$.  Then 
$$\sum_{\mathbf{i}\in I}\mathbf{i}=\dfrac{m\binom{n+m}{n}}{n+1}(1,\ldots,1)$$
where addition and scalar multiplication are coordinate-wise.
\end{Lemma}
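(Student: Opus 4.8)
## Proof Proposal

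The plan is to exploit the symmetry of the index set $I$ under the action of the symmetric group $S_{n+1}$ permuting the coordinates $i_0, \ldots, i_n$. Since the condition $i_0 + \cdots + i_n = m$ is symmetric, any permutation $\sigma \in S_{n+1}$ maps $I$ bijectively to itself. Consequently the vector $\sum_{\mathbf{i} \in I} \mathbf{i}$ is fixed by every coordinate permutation, hence must be a scalar multiple of $(1, \ldots, 1)$. This immediately reduces the problem to computing a single number: the common value $c$ of each coordinate of $\sum_{\mathbf{i}\in I}\mathbf{i}$, so that $\sum_{\mathbf{i}\in I}\mathbf{i} = c\,(1,\ldots,1)$.

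To find $c$, I would sum all $n+1$ coordinates. On one hand, the total is $(n+1)c$. On the other hand, it equals $\sum_{\mathbf{i}\in I}(i_0 + \cdots + i_n) = \sum_{\mathbf{i}\in I} m = m \cdot |I|$. Now $|I|$ is the number of ways to write $m$ as an ordered sum of $n+1$ nonnegative integers, which is the classical stars-and-bars count $\binom{n+m}{n}$. Equating the two expressions gives $(n+1)c = m\binom{n+m}{n}$, i.e. $c = \dfrac{m\binom{n+m}{n}}{n+1}$, which is exactly the claimed coefficient.

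There is essentially no obstacle here; the only points requiring a word of care are (i) noting that $\dfrac{m\binom{n+m}{n}}{n+1}$ is indeed an integer — which follows a posteriori since it equals the $0$-th coordinate sum $\sum_{\mathbf{i}\in I} i_0$, a sum of integers (one could also verify directly that $\dfrac{m}{n+1}\binom{n+m}{n} = \dfrac{m}{n+m+1}\binom{n+m+1}{n+1}\cdot\dfrac{n+m+1}{n+1}$, but the symmetry argument makes this unnecessary) — and (ii) the trivial base/degenerate observations (the identity holds vacuously or trivially for small $m$, and for $m \geq 1$, $n \geq 1$ the set $I$ is nonempty). As an optional sanity check one can directly compute $\sum_{\mathbf{i}\in I} i_0 = \sum_{j=0}^{m} j \binom{n-1+m-j}{n-1}$ and confirm it equals $\dfrac{m\binom{n+m}{n}}{n+1}$ via the hockey-stick identity, but the permutation-symmetry proof is cleaner and is the one I would present.
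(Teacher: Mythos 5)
Your proof is correct. The paper actually states Lemma \ref{comb} without providing a proof, and your argument — symmetry under coordinate permutations forces the sum to be a multiple of $(1,\ldots,1)$, and summing all coordinates gives $m\,|I| = m\binom{n+m}{n}$, whence each coordinate equals $\frac{m\binom{n+m}{n}}{n+1}$ — is exactly the standard justification one would supply.
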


We also need Lemma 2.1 from \cite{CLZ}.

\begin{Lemma}\label{CLZlemma}
Let $F_{1},F_{2} \in k[x_{0},\ldots,x_{n}]$ be coprime homogeneous polynomials of degrees $d_{1}$ and $d_{2}$, respectively. Let $B \subset k[x_{0},\ldots,x_{n}]_{[m]}$ be a set of monomials of degree $m$ whose images are linearly independent in $k[x_{0},\ldots,x_{n}]_{[m]}/(F_{1},F_{2})_{[m]}$. Then 
$$\begin{aligned}
\sum_{\mathbf{x}^{\mathbf{j}} \in B}\text{ord}_{x_{i}}\mathbf{x}^{\mathbf{j}}&\leq \binom{m+n}{n+1}-\binom{m+n-d_{1}}{n+1}-\binom{m+n-d_{2}}{n+1}+\binom{m+n-d_{1}-d_{2}}{n+1}\\
&\leq d_{1}d_{2}\binom{m+n-2}{n-1}
\end{aligned}$$
for $i=0,\ldots,n$.
\end{Lemma}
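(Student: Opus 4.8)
The plan is to convert the sum $\sum_{\mathbf{x}^{\mathbf{j}}\in B}\text{ord}_{x_i}\mathbf{x}^{\mathbf{j}}$ into a sum over ``levels'' and to bound each level by the dimension of a graded piece of the complete intersection $R/(F_1,F_2)$, where $R=k[x_0,\ldots,x_n]$; the right-hand side of the first displayed inequality will then be recognized as exactly a partial sum of the Hilbert function of $R/(F_1,F_2)$. Fix $i\in\{0,\ldots,n\}$. For $\ell\ge 1$ set $B_\ell=\{\mathbf{x}^{\mathbf{j}}\in B:\text{ord}_{x_i}\mathbf{x}^{\mathbf{j}}\ge\ell\}$, so that a monomial of $B$ belongs to $B_\ell$ precisely when it is divisible by $x_i^{\ell}$, and $B_\ell=\emptyset$ once $\ell>m$; since $\text{ord}_{x_i}\mathbf{x}^{\mathbf{j}}=\#\{\ell\ge 1:\mathbf{x}^{\mathbf{j}}\in B_\ell\}$ we get
$$\sum_{\mathbf{x}^{\mathbf{j}}\in B}\text{ord}_{x_i}\mathbf{x}^{\mathbf{j}}=\sum_{\ell=1}^{m}\#B_\ell .$$
The crux is that division by $x_i^{\ell}$ carries $B_\ell$ bijectively onto a set $B_\ell'$ of monomials of degree $m-\ell$ whose images are still linearly independent in $R_{[m-\ell]}/(F_1,F_2)_{[m-\ell]}$: a relation $\sum c_{\mathbf{j}'}\mathbf{x}^{\mathbf{j}'}\in(F_1,F_2)_{[m-\ell]}$ with $\mathbf{x}^{\mathbf{j}'}\in B_\ell'$ becomes, after multiplication by $x_i^{\ell}$, a relation $\sum c_{\mathbf{j}'}(x_i^{\ell}\mathbf{x}^{\mathbf{j}'})\in(F_1,F_2)_{[m]}$ among the corresponding monomials of $B_\ell\subseteq B$, forcing all $c_{\mathbf{j}'}=0$ by hypothesis. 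Hence $\#B_\ell=\#B_\ell'\le\dim_k R_{[m-\ell]}/(F_1,F_2)_{[m-\ell]}$.

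Next I would compute the Hilbert function of $R/(F_1,F_2)$. Since $R$ is a UFD and $F_1,F_2$ are coprime, both $F_j$ are nonzerodivisors and $F_1R\cap F_2R=F_1F_2R$, so $\dim_k(F_1R+F_2R)_{[\ell]}=\dim_k R_{[\ell-d_1]}+\dim_k R_{[\ell-d_2]}-\dim_k R_{[\ell-d_1-d_2]}$ (equivalently, the Koszul resolution of the complete intersection is exact), whence
$$\dim_k(R/(F_1,F_2))_{[\ell]}=\binom{\ell+n}{n}-\binom{\ell-d_1+n}{n}-\binom{\ell-d_2+n}{n}+\binom{\ell-d_1-d_2+n}{n}$$
for every integer $\ell$, with the convention $\binom{a}{b}=0$ whenever $a<b$. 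Summing the bound of the previous paragraph over $\ell=1,\ldots,m$ and reindexing gives $\sum_{\ell=1}^{m}\#B_\ell\le\sum_{j=0}^{m-1}\dim_k(R/(F_1,F_2))_{[j]}$; applying the hockey-stick identity $\sum_{j=0}^{M}\binom{j+n}{n}=\binom{M+1+n}{n+1}$ to each of the four binomial terms (the shifted ones after dropping their vanishing tails) turns the right side into
$$\binom{m+n}{n+1}-\binom{m+n-d_1}{n+1}-\binom{m+n-d_2}{n+1}+\binom{m+n-d_1-d_2}{n+1},$$
which is the first inequality.

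For the second inequality I would telescope with Pascal's rule $\binom{t}{n+1}-\binom{t-1}{n+1}=\binom{t-1}{n}$, obtaining $\binom{m+n}{n+1}-\binom{m+n-d_1}{n+1}=\sum_{s=0}^{d_1-1}\binom{m+n-1-s}{n}$ and the analogous identity with an extra shift by $d_2$; subtracting these and telescoping once more with $\binom{A}{n}-\binom{A-1}{n}=\binom{A-1}{n-1}$ yields
$$\binom{m+n}{n+1}-\binom{m+n-d_1}{n+1}-\binom{m+n-d_2}{n+1}+\binom{m+n-d_1-d_2}{n+1}=\sum_{s=0}^{d_1-1}\sum_{r=0}^{d_2-1}\binom{m+n-2-s-r}{n-1}.$$
Each of the $d_1d_2$ summands is at most $\binom{m+n-2}{n-1}$ because $A\mapsto\binom{A}{n-1}$ is nondecreasing in the integer $A$ (it vanishes for $A<n-1$), which gives the bound $d_1d_2\binom{m+n-2}{n-1}$.

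I expect no genuine obstacle, only one point needing care: the linear-independence statement for $B_\ell'$, which is the single place the hypothesis on $B$ enters and which legitimizes the ``integration over levels''. Beyond that, one must merely keep the convention $\binom{a}{b}=0$ for $a<b$ consistently in force, so that the Hilbert-function formula and all the hockey-stick and telescoping identities hold uniformly in $m,d_1,d_2$ without case analysis.
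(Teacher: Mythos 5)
Your proposal is correct and follows essentially the same route as the paper: both bound the order sum by the partial sums $\sum_{j=0}^{m-1}\dim_k R_{[j]}/(F_1,F_2)_{[j]}$ of the Hilbert function of the complete intersection and then conclude by binomial identities, with your level-set/divide-by-$x_i^{\ell}$ injectivity argument being just a rephrasing of the paper's use of the image spaces $S'_{[\ell]}$ of $x_i^{\ell}k[x_0,\ldots,x_n]_{[m-\ell]}$. The only difference is presentational, as you spell out the hockey-stick and telescoping steps that the paper compresses into an appeal to Pascal's identity.
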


\begin{proof}
Let $S=k[x_{0},\ldots,x_{n}]$. For an $l \in \mathbb{N}$ and a graded module $M$ over $S$, let $d_{M}(l)=\dim_{k}M_{[l]}$. Let $I$ be an ideal generated by a homogeneous polynomial of degree $i$. By the well-known theory of Hilbert polynomials, $d_{S/I}(l)=d_{S}(l)-d_{S}(l-i)$. In this case, 
$$\begin{aligned}
\dim({S_{[l]}/(F_{1},F_{2})_{[l]}})&=d_{S/(F_{1})}(l)-d_{S/(F_{1})}(l-d_{2})\\
&=d_{S}(l)-d_{S}(l-d_{1})-(d_{S}(l-d_{2})-d_{S}(l-d_{1}-d_{2}))\\
&=\binom{l+n}{n}-\binom{l+n-d_{1}}{n}-\binom{l+n-d_{2}}{n}+\binom{l+n-d_{1}-d_{2}}{n}.
\end{aligned}$$
Let $i\in \{0,\ldots,n\}$, let $S'_{[l]}$ be the image of $x_{i}^{l}k[x_{0},\ldots,x_{n}]_{[m-l]}$ in $S_{[m]}/(F_{1},F_{2})_{[m]}$.
Notice that 
$$\sum_{\mathbf{x}^{\mathbf{j}} \in B}\text{ord}_{x_{i}}\mathbf{x}^{\mathbf{j}}\leq \sum_{j=1}^{m}j(\dim S'_{[j]}-\dim S'_{[j+1]})=\sum_{j=1}^{m}\dim S'_{[j]},$$
and that $\dim S'_{[l]} \leq \dim S_{[m-l]}/(F_{1},F_{2})_{[m-l]}$. Hence, we have
$$\sum_{\mathbf{x}^{\mathbf{j}} \in B}\text{ord}_{x_{i}}\mathbf{x}^{\mathbf{j}}\leq \sum_{j=0}^{m-1}\dim S_{[j]}/(F_{1},F_{2})_{[j]}.$$
Using Pascal's identity for binomial coefficients,
$$\begin{aligned}
\sum_{\mathbf{x}^{\mathbf{j}} \in B}\text{ord}_{x_{i}}\mathbf{x}^{\mathbf{j}}&\leq \binom{m+n}{n+1}-\binom{m+n-d_{1}}{n+1}-\binom{m+n-d_{2}}{n+1}+\binom{m+n-d_{1}-d_{2}}{n+1}\\
&\leq d_{1}d_{2}\binom{m+n-2}{n-1}.
\end{aligned}$$
\end{proof}

\begin{Th}\label{notinsgen}
Let $k$ be a number field and let $S$ be a finite set of places of $k$ containing the archimedean places. Let $f$, $g \in k[x_{1}, \ldots, x_{n}]$ be coprime polynomials. For all $0< \delta <1$, there exists a proper Zariski closed subset $Z$ of $\mathbb{G}_{m}^{n}$ such that
$$\displaystyle - \sum_{v \in M_{k} \setminus S}\log^{-}\max\{|f(u_{1},\ldots, u_{n})|_{v}, |g(u_{1},\ldots,u_{n})|_{v}\}< C \delta^{1/2}\sum_{1\leq i \leq n}h(u_{i})$$
for all $\textbf{u}=(u_{1},\ldots,u_{n})\in \mathbb{G}_{m}^{n}(k)_{S,\delta} \setminus Z$, where $C=2(n^{2}\deg f +n\deg g )$ is a constant.\\
\end{Th}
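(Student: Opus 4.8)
The plan is to follow the strategy of Levin's proof of Theorem \ref{gcdle}, adapting it to the ``almost $S$-unit'' setting, with the parameter $\delta^{1/2}$ appearing from the tension between how much the non-$S$ contribution $h_{\bar S}(\mathbf u)$ can be and how many auxiliary monomials we can afford to use. First I would homogenize: writing $f,g$ in terms of homogeneous forms and passing to $\mathbb P^n$ via an extra coordinate $x_0$, so that the relevant divisors on $\mathbb P^n$ are the hypersurfaces cut out by the homogenizations $F,G$ of $f,g$ (of degrees $d_1=\deg f$, $d_2=\deg g$). The quantity $-\sum_{v\notin S}\log^-\max\{|f(\mathbf u)|_v,|g(\mathbf u)|_v\}$ is, up to the bounded archimedean-at-$S$ discrepancies, essentially $\sum_{v\notin S}\min\{\lambda_{D_F,v}(\mathbf u),\lambda_{D_G,v}(\mathbf u)\}$ where $D_F,D_G$ are the associated hypersurfaces (this uses the choice of local height function $\lambda_{D,v}(P)=\log(|P|_v^d/|F(P)|_v)$ recorded in Section 2.1), together with control of the contributions from the coordinate hyperplanes $x_i=0$ via the almost-$S$-unit hypothesis.

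Next, the core construction: fix a large integer $m$ and, for each place $v\notin S$, build a filtration of the space $V_m:=k[x_0,\ldots,x_n]_{[m]}/(F,G)_{[m]}$ (or rather work with a basis of monomials whose images are independent there) ordered so as to extract a large contribution of $\min\{\lambda_{D_F,v},\lambda_{D_G,v}\}$; concretely, for the places where $|F(\mathbf u)|_v$ or $|G(\mathbf u)|_v$ is small one replaces as many basis monomials as possible by multiples of $F$ or $G$. This is exactly the mechanism governed by Lemma \ref{CLZlemma}: the total ``order along $x_i$'' of any monomial basis of $V_m$ is at most $d_1d_2\binom{m+n-2}{n-1}$, which is of order $m^{n-1}$, while $\dim V_m$ is of order $m^n/n!$; the ratio, of order $1/m$, is what one can save. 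One then assembles, for each $v\in S$, linear forms on $\mathbb P^{N}$ (with $N+1=\dim V_m$, coordinates indexed by the chosen basis monomials evaluated at $\mathbf u$) coming from this filtration together with forms tracking $F,G$ and the coordinate hyperplanes, applies Schmidt's Subspace Theorem in the form stated in Section 2.1, and uses Lemma \ref{comb} to evaluate the ``expected'' value $\sum_{\mathbf i}\mathbf i$ so that the main height term $h(\varphi_m(\mathbf u))$ on the right-hand side of the Subspace Theorem is comparable to $\tfrac{m}{n+1}\binom{m+n}{n}\sum_i h(u_i)$ up to lower order. The almost-$S$-unit hypothesis $h_{\bar S}(\mathbf u)<\delta h(\mathbf u)$ enters to absorb the error coming from those places $v\notin S$ at which some $|u_i|_v\neq 1$: such places contribute an amount bounded by (a combinatorial factor)$\cdot\delta\cdot h(\mathbf u)$ to the relevant sums, and balancing this loss against the $1/m$ gain from the filtration forces the choice $m\asymp \delta^{-1/2}$, producing the final bound with $C\delta^{1/2}\sum_i h(u_i)$ and an exceptional set $Z$ which is the union of the (finitely many, for fixed $m$) Subspace-Theorem subspaces pulled back through $\varphi_m$, hence a proper Zariski closed subset of $\mathbb G_m^n$.

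The main obstacle I expect is the bookkeeping in the place-by-place construction of the linear forms and in showing that, after summing over $v\in S$ and over $v\notin S$, the combinatorial gain really is uniformly of size $\asymp 1/m$ times the main term while all error terms are either $O(1)$, of lower order in $m$, or bounded by a controlled multiple of $\delta\, h(\mathbf u)$: one must be careful that the forms chosen at each $v$ are linearly independent (so the Subspace Theorem applies), that the pullback map $\varphi_m:\mathbb G_m^n\to\mathbb P^N$ is defined on the relevant locus and that $h(\varphi_m(\mathbf u))=\tfrac{m}{n+1}\binom{m+n}{n}\sum_i h(u_i)+o_m\big(\sum_i h(u_i)\big)+O(1)$ uniformly, and that the optimization $m\asymp\delta^{-1/2}$ indeed yields the stated explicit constant $C=2(n^2\deg f+n\deg g)$ after all the $\binom{m+n}{n}$-type factors cancel. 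A secondary subtlety is handling the degenerate interaction with the coordinate hyperplanes when $f$ or $g$ is not ``generic'' with respect to them (this is why $\deg f$ enters with an extra factor of $n$), and ensuring the resulting $Z$ can be taken independent of $\delta$ — which it can, essentially because the family of subspaces for all sufficiently large $m$ is still governed by finitely much data once one uses that the problematic loci are cut out by $F=G=0$ together with coordinate hyperplanes.
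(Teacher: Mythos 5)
Your plan gets the outer shell right (auxiliary degree $m$, greedy monomial bases, Lemma \ref{CLZlemma} for the $\asymp 1/m$ gain, $\delta$-sized errors from the almost-$S$-unit hypothesis, and the optimization $m\asymp\delta^{-1/2}$), but the central mechanism by which the quantity $-\sum_{v\notin S}\log^-\max\{|f(\mathbf u)|_v,|g(\mathbf u)|_v\}$ actually enters the Subspace inequality is misidentified, and as written the argument would not close. You propose building a filtration ``for each place $v\notin S$'' to extract $\min\{\lambda_{D_F,v},\lambda_{D_G,v}\}$; but $M_k\setminus S$ is infinite, and Schmidt's theorem only accepts linear forms at the finitely many places of $S$, so no place-by-place construction outside $S$ can be fed into it. The paper's (and Levin's) resolution is different: the point sent to the Subspace Theorem is $P=(\phi_1(\mathbf u),\ldots,\phi_N(\mathbf u))$ where $\phi_1,\ldots,\phi_N$ is a basis of the \emph{ideal piece} $(f,g)_{(m)}$, of dimension $N\sim\binom{m+n}{n}$. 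Because every $\phi_i=fp_i+gq_i$, one gets \emph{simultaneously at every} $v\notin S$, with no choices made, the bound $\log|P|_v\leq\log^-\max\{|f(\mathbf u)|_v,|g(\mathbf u)|_v\}+m\lambda_v(\mathbf u)+O_v(1)$; summing and writing $\sum_{v\in S}\log|P|_v=h(P)-\sum_{v\notin S}\log|P|_v$ puts the gcd quantity, multiplied by the large factor $N$, on the left-hand side of the Subspace inequality. Your setup instead places the point in $\mathbb P^N$ with $N+1=\dim V_m$ (the small quotient, of size $O(m^{n-2})$) and with coordinates given by \emph{monomials} evaluated at $\mathbf u$; monomials in almost-$S$-units carry only $O(\delta\, h(\mathbf u))$ of content outside $S$, so with that point the quantity you are trying to bound never appears anywhere in the inequality. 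The greedy monomial bases of $V_m$ are chosen for each $v\in S$ (not $v\notin S$), and their only role is to guarantee $|L^v_{\mathbf i}(P)|_v\leq C_v|\mathbf u^{\mathbf i}|_v$ for the $N$ independent forms at each $v\in S$.

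Two further points. Your normalization $h(\varphi_m(\mathbf u))=\tfrac{m}{n+1}\binom{m+n}{n}\sum_i h(u_i)+\cdots$ is not the height of any point used here: the paper only needs $h(P)\leq mh(\mathbf u)+O(1)$, while $\tfrac{m}{n+1}\binom{m+n}{n}$ is the coefficient from Lemma \ref{comb} appearing in the product-formula computation $-\sum_{v\in S}\sum_{|\mathbf i|\leq m}\log|\mathbf u^{\mathbf i}|_v=\sum_{v\notin S}\sum_{|\mathbf i|\leq m}\log|\mathbf u^{\mathbf i}|_v$, which is where a $\delta$-controlled error of size $\tfrac{m}{n+1}\binom{m+n}{n}\sum_{v\notin S}\sum_i\lambda_v(1/u_i)$ arises. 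Finally, the degenerate case $(f,g)_{(m)}=k[x_1,\ldots,x_n]_m$ (trivial quotient) must be treated separately; the paper does this with a B\'ezout identity $f\tilde f+g\tilde g=1$, which gives the even stronger bound $mn\delta\sum_i h(u_i)$ directly.
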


\begin{proof}
This proof is modeled on the proof of Theorem 3.2 of \cite{Le}. \\

Consider the ideal $(f,g) \subset k[x_{1},\ldots,x_{n}]$. We first assume that 
$$(f,g)_{(m)} \neq k[x_{1},\ldots,x_{n}]_{m}.$$
It follows that the $k$-vector space $V_{m}=k[x_{1},\ldots,x_{n}]_{m}/(f,g)_{(m)}$ is not trivial. Let $\textbf{u}=(u_{1},\ldots,u_{n}) \in \mathbb{G}_{m}^{n}(k)_{S,\delta}$. For $v \in S$, we construct a basis $B_{v}$ for $V_{m}$ as follows. Choose a monomial $\textbf{x}^{\textbf{i}_{1}}\in k[x_{1},\ldots,x_{n}]_{m}$ so that $|\textbf{u}^{\textbf{i}_{1}}|_{v}$ is minimal subject to the condition $\textbf{x}^{\textbf{i}_{1}} \notin (f,g)_{(m)}$. Suppose now that $\textbf{x}^{\textbf{i}_{1}},\ldots,\textbf{x}^{\textbf{i}_{j}}$ have been constructed and are linearly independent modulo $(f,g)_{(m)}$, but don't span $k[x_{1},\ldots,x_{n}]_{m}$ modulo $(f,g)_{(m)}$. Then we let $\textbf{x}^{\textbf{i}_{j+1}} \in k[x_{1},\ldots,x_{n}]_{m}$ be a monomial such that $|\textbf{u}^{\textbf{i}_{j+1}}|_{v}$ is minimal subject to the condition that $\textbf{x}^{\textbf{i}_{1}},\ldots,\textbf{x}^{\textbf{i}_{j+1}}$ are linearly independent modulo $(f,g)_{(m)}$. In this way, we construct a basis of $V_{m}$ with monomial representatives $\textbf{x}^{\textbf{i}_{1}},\ldots,\textbf{x}^{\textbf{i}_{N'}}$, where $N'=N'_{m}=\dim V_{m}$. Let $I_{v}=\{\textbf{i}_{1},\ldots,\textbf{i}_{N'}\}$. We also choose a basis $\phi_{1},\ldots,\phi_{N}$ of the vector space $(f,g)_{(m)}$, where $N=N_{m}=\dim (f,g)_{(m)}$. Now for $\textbf{i}$, $|\textbf{i}| \leq m$, we have that
$$\displaystyle \textbf{x}^{\textbf{i}}+\sum_{j=1}^{N'}c_{\textbf{i},j}\textbf{x}^{\textbf{i}_{j}} \in (f,g)_{(m)}$$
for some choice of coefficients $c_{\textbf{i},j} \in k$. Then for each such $\textbf{i}$ there is a linear form $L_{\textbf{i}}^{v}$ over $k$ such that
$$\displaystyle L_{\textbf{i}}^{v}(\phi_{1},\ldots,\phi_{N})=\textbf{x}^{\textbf{i}}+\sum_{j=1}^{N'}c_{\textbf{i},j}\textbf{x}^{\textbf{i}_{j}}.$$
Note that $\{L_{\textbf{i}}^{v}(\phi_{1},\ldots,\phi_{N}): |\textbf{i}| \leq m, \textbf{i} \notin I_{v}\}$ is a basis for $(f,g)_{(m)}$, and $\{L_{\textbf{i}}^{v}: |\textbf{i}| \leq m, \textbf{i} \notin I_{v}\}$ is a set of $N$ linearly independent forms in $N$ variables. Let
$$\displaystyle P=\phi(\textbf{u}):=(\phi_{1}(\textbf{u}),\ldots,\phi_{N}(\textbf{u})) \in k^{N}.$$
We may additionally assume that $\phi(\textbf{u})\neq 0$ (by enlarging the set $Z$). From the triangle inequality and the definition of $\textbf{x}^{\textbf{i}_{1}},\ldots,\textbf{x}^{\textbf{i}_{N'}}$, for any $\textbf{i}$ with $|\textbf{i}| \leq m$, $\textbf{i} \notin I_{v}$, we have the key inequality
$$\log |L_{\textbf{i}}^{v}(P)|_{v} \leq \log |\textbf{u}^{\textbf{i}}|_{v}+C_{v}$$
where the constant $C_{v}$ depends only on $v \in S$ and the set $\{\textbf{i}_{1},\ldots,\textbf{i}_{N'}\}$ (and not on $\textbf{u}$).\\

We will apply the Subspace Theorem with the choice of linear forms $L_{\textbf{i}}^{v}$, $|\textbf{i}| \leq m$, $\textbf{i}\not \in I_{v}$, for each $v \in S$. We want to estimate the sum
$$\displaystyle \sum_{v \in S}\sum_{|\textbf{i}| \leq m, \textbf{i}\notin I_{v}}\log \dfrac{|P|_{v}}{|L_{\textbf{i}}^{v}(P)|_{v}}.$$
Towards this end, we estimate the sums
$$\displaystyle -\sum_{v \in S}\sum_{|\textbf{i}| \leq m, \textbf{i}\notin I_{v}}\log|L_{\textbf{i}}^{v}(P)|_{v} \ \ \text{and}\ \ \sum_{v \in S}\sum_{|\textbf{i}| \leq m, \textbf{i}\notin I_{v}}\log|P|_{v}$$ 
separately.\\

We have
$$\displaystyle -\sum_{v \in S}\sum_{|\textbf{i}| \leq m, \textbf{i} \notin \textbf{I}_{v}}\log|L_{\textbf{i}}^{v}(P)|_{v} \geq -\sum_{v \in S}\sum_{|\textbf{i}| \leq m, \textbf{i} \notin \textbf{I}_{v}} \log|\textbf{u}^{\textbf{i}}|_{v}-CN$$
where $C=\displaystyle \sum_{v \in S}C_{v}$. By the product formula,\\
$$\displaystyle \sum_{v \in S}\log|\textbf{u}^{\textbf{i}}|_{v}+\sum_{v \in M_{k}\setminus S}\log|\textbf{u}^{\textbf{i}}|_{v}=\sum_{v \in M_{k}}\log|\textbf{u}^{\textbf{i}}|_{v}=0.$$
It follows that,
$$\begin{aligned}
\displaystyle -\sum_{v \in S}\sum_{|\textbf{i}| \leq m, \textbf{i} \notin \textbf{I}_{v}} \log|\textbf{u}^{\textbf{i}}|_{v} &= -\sum_{v \in S}\sum_{|\textbf{i}| \leq m} \log|\textbf{u}^{\textbf{i}}|_{v}+\sum_{v \in S}\sum_{\textbf{i} \in \textbf{I}_{v}}\log|\textbf{u}^{\textbf{i}}|_{v}\\
&=\sum_{v \in S}\sum_{\textbf{i} \in \textbf{I}_{v}}\log|\textbf{u}^{\textbf{i}}|_{v}+\sum_{v \in M_{k} \setminus S}\sum_{|\textbf{i}| \leq m} \log|\textbf{u}^{\textbf{i}}|_{v}.
\end{aligned}$$

Let $d_{1}=\deg f$ and $d_{2}=\deg g$. By Lemma \ref{CLZlemma}, we have
$$\begin{aligned}
\displaystyle -\sum_{v \in S}\sum_{\textbf{i} \in \textbf{I}_{v}} \log|\textbf{u}^{\textbf{i}}|_{v} \leq d_{1}d_{2}\binom{m+n-2}{n-1}\sum_{1\leq i\leq n}h(u_{i}),
\end{aligned}$$
we find that,
$$\begin{aligned}
\displaystyle -\sum_{v \in S}\sum_{|\textbf{i}| \leq m, \textbf{i} \notin \textbf{I}_{v}}\log|L_{\textbf{i}}^{v}(P)|_{v} \geq& -d_{1}d_{2}\binom{m+n-2}{n-1}\sum_{1\leq i \leq n}h(u_{i})-CN\\
&+\sum_{v \in M_{k} \setminus S}\sum_{|\textbf{i}| \leq m} \log|\textbf{u}^{\textbf{i}}|_{v}.
\end{aligned}$$
By Lemma \ref{comb},
$$\begin{aligned}
\displaystyle \sum_{v \in M_{k} \setminus S}\sum_{|\textbf{i}| \leq m} \log|\textbf{u}^{\textbf{i}}|_{v}&=\sum_{|\textbf{i}| \leq m}\sum_{v \in M_{k} \setminus S} \log|\textbf{u}^{\textbf{i}}|_{v} \\
&=\dfrac{m\binom{n+m}{n}}{n+1}\sum_{v\in M_{k}\setminus S}\sum_{1\leq i \leq n}\log|u_{i}|_{v}\\
&\geq -\dfrac{m\binom{n+m}{n}}{n+1}\sum_{v\in M_{k}\setminus S}\sum_{1 \leq i \leq n}\lambda_{v}\left(\dfrac{1}{u_{i}}\right).
\end{aligned}$$
So we estimate,
$$\begin{aligned}
\displaystyle  -\sum_{v \in S}\sum_{|\textbf{i}| \leq m, \textbf{i} \notin \textbf{I}_{v}}\log|L_{\textbf{i}}^{v}(P)|_{v}
&\geq -d_{1}d_{2}\binom{m+n-2}{n-1}\sum_{1\leq i \leq n}h(u_{i})\\
-&\dfrac{m\binom{n+m}{n}}{n+1}\sum_{v\in M_{k}\setminus S}\sum_{1\leq i \leq n}\lambda_{v}\left(\dfrac{1}{u_{i}}\right)-CN.
\end{aligned}$$

On the other hand,
$$\displaystyle \sum_{v \in S}\sum_{|\textbf{i}| \leq m, \textbf{i} \notin \textbf{I}_{v}}\log|P|_{v}=N\sum_{v \in S}\log|P|_{v}=N(h(P)-\sum_{v \in M_{k} \setminus S}\log|P|_{v}).$$

Now since $\phi_{i}=fp_{i}+gq_{i}$, $\deg fp_{i}, \deg gq_{i} \leq m$, we have for $v \in M_{k} \setminus S$,
$$\begin{aligned}
\log|\phi_{i}(\textbf{u})|_{v} &= \log |fp_{i}(\text{u})+gq_{i}(\textbf{u})|_{v}\\
&\leq \log \max\{|fp_{i}(\textbf{u})|_{v},|gq_{i}(\textbf{u})|_{v}\}+O_{v}(1)\\
&\leq \log^{-}\max\{|fp_{i}(\textbf{u})|_{v},|gq_{i}(\textbf{u})|_{v}\}+m\lambda_{v}(\textbf{u})+O_{v}(1)\\
&\leq \log^{-}\max\{|f(\textbf{u})|_{v},|g(\textbf{u})|_{v}\}+m\lambda_{v}(\textbf{u})+O_{v}(1),
\end{aligned}$$
where $O_{v}(1)=0$ for all but finitely many $v$.

Then for $v \in M_{k} \setminus S$,
$$\log|P|_{v} \leq \log^{-}\max\{|f(\textbf{u})|_{v},|g(\textbf{u})|_{v}\}+m\lambda_{v}(\textbf{u})+C_{v}.$$

Now we sum over $v \in M_{k} \setminus S$ to get:

$$\sum_{v \in M_{k} \setminus S}\log|P|_{v}\leq \sum_{v \in M_{k} \setminus S}\log^{-}\max \{|f(\textbf{u})|_{v},|g(\textbf{u})|_{v}\}+ m\sum_{v \in M_{k}\setminus S}\lambda_{v}(\textbf{u})+O(1).$$

Then we find the estimate:
$$\begin{aligned}
\displaystyle \sum_{v \in S}\sum_{|\textbf{i}| \leq m, \textbf{i} \notin \textbf{I}_{v}}\log|P|_{v} \geq& N(h(P)-\sum_{v \in M_{k} \setminus S}\log^{-}\max \{|f(\textbf{u})|_{v},|g(\textbf{u})|_{v}\}\\
&-m\sum_{v \in M_{k}\setminus S}\sum_{1\leq i\leq n} \lambda_{v}(u_{i}))+O(1).
\end{aligned}$$

One also has the easy estimate
$$\displaystyle h(P) \leq m h(\textbf{u})+O(1).$$

Schmidt's Subspace Theorem implies that there exists a finite union $Z$ of proper subspaces of $k^{N}$ such that
$$\displaystyle \sum_{v \in S}\sum_{|\textbf{i}| \leq m,\textbf{i} \notin \textbf{I}_{v}}\log \dfrac{|Q|_{v}}{|L_{\textbf{i}}^{v}(Q)|_{v}} \leq (N+1)h(Q)$$
for all $Q \in k^{N}\setminus Z$.\\

Using the above estimates, if $P=\phi(\textbf{u}) \notin Z$, we find that up to an $O(1)$,
$$\begin{aligned}\displaystyle &N\left(h(P)-\sum_{v \in M_{k} \setminus S}\log^{-}\max \{|f(\textbf{u})|_{v},|g(\textbf{u})|_{v}\}-m\sum_{v \in M_{k}\setminus S}\sum_{1\leq i \leq n}\lambda_{v}(u_{i})\right)\\
&-d_{1}d_{2}\binom{m+n-2}{n-1}\sum_{1 \leq i \leq n}h(u_{i})-\dfrac{m\binom{n+m}{n}}{n+1}\sum_{v \in M_{k}\setminus S}\sum_{1 \leq i\leq n}\lambda_{v}\left(\dfrac{1}{u_{i}}\right)\\
&\leq (N+1)h(P)+CN.
\end{aligned}$$
Applying the estimate for $h(P)$, combining terms, and dividing by $N$, we obtain up to an $O(1)$,
$$\displaystyle 
\begin{aligned}
&-\sum_{v \in M_{k} \setminus S}\log^{-}\max \{|f(\textbf{u})|_{v},|g(\textbf{u})|_{v}\}-m\sum_{v\in M_{k}\setminus S}\sum_{1 \leq i \leq n}\lambda_{v}(u_{i})\\
&-\dfrac{m\binom{n+m}{n}}{N(n+1)}\sum_{v \in M_{k}\setminus S}\sum_{1 \leq i \leq n}\lambda_{v}\left(\dfrac{1}{u_{i}}\right)
\leq \dfrac{m+d_{1}d_{2}\binom{m+n-2}{n-1}}{N}\sum_{1 \leq i \leq n}h(u_{i}).
\end{aligned}$$

Since $f$ and $g$ are coprime, the ideal $(f,g)$ defines a closed subset of $\mathbb{A}^{n}$ of codimension at least $2$. Without loss of generality, assume $d_{1}\geq d_{2}$. By Lemma \ref{CLZlemma}, we find that $N'=\binom{m+n}{n}-\binom{m+n-d_{1}}{n}-(\binom{m+n-d_{2}}{n}-\binom{m+n-d_{1}-d_{2}}{n})\leq d_{1}d_{2}\binom{m+n-2}{n-2}$ and that $N=\binom{m+n}{n}-N' \geq \binom{m+n}{n}-d_{1}d_{2}\binom{m+n-2}{n-2}$. We assume now  $m \geq d_{1}n$. Then we have the estimate 
$$\begin{aligned}
\left(\binom{m+n}{n}-d_{1}d_{2}\binom{m+n-2}{n-2}\right)\bigg/\binom{m+n}{n}&=1-\dfrac{d_{1}d_{2}n(n-1)}{(m+n)(m+n-1)}\\
&\geq 1-\dfrac{d_{1}d_{2}n(n-1)}{d_{1}^{2}n^{2}}\\
&\geq 1-\dfrac{n-1}{n}=\dfrac{1}{n}.
\end{aligned}$$
Therefore we have
$$\displaystyle 
\begin{aligned}-\sum_{v \in M_{k} \setminus S}\log^{-}\max \{|f(\textbf{u})|_{v},|g(\textbf{u})|_{v}\} &\leq \dfrac{m+d_{1}d_{2}\binom{m+n-2}{n-1}}{1/n\binom{m+n}{n}}\sum_{1 \leq i \leq n}h(u_{i})\\
&+m\sum_{v \in M_{k}\setminus S}\sum_{1\leq i \leq n}\lambda_{v}(u_{i})\\
&+\dfrac{m\binom{m+n}{n}/(n+1)}{1/n\binom{m+n}{n}}\sum_{v \in M_{k}\setminus S}\sum_{1\leq i \leq n}\lambda_{v}\left(\dfrac{1}{u_{i}}\right).\\
\end{aligned}$$
One shall notice that
$$\displaystyle \dfrac{m+d_{1}d_{2}\binom{m+n-2}{n-1}}{1/n\binom{m+n}{n}}\leq \dfrac{2d_{1}d_{2}n^{2}}{m+1},$$
and that
$$\displaystyle \dfrac{\dfrac{m\binom{m+n}{n}}{n+1}}{1/n\binom{m+n}{n}}\leq m.$$
By Remark \ref{stand}, the condition $\displaystyle \sum_{1\leq i\leq n}h_{\bar{S}}(u_{i}) \leq n\delta \sum_{1\leq i \leq n}h(u_{i})$ is satisfied and we get
$$\displaystyle
\begin{aligned}-\sum_{v \in M_{k} \setminus S}\log^{-}\max \{|f(\textbf{u})|_{v},|g(\textbf{u})|_{v}\} &\leq \left(\dfrac{2d_{1}d_{2}n^{2}}{m+1}+mn\delta \right)\sum_{1\leq i \leq n}h(u_{i}).
\end{aligned}
$$

Now letting $m=\left \lfloor \dfrac{2d_{1}n}{\delta^{1/2}}\right \rfloor$, it follows that
$$\displaystyle -\sum_{v \in M_{k} \setminus S}\log^{-}\max \{|f(\textbf{u})|_{v},|g(\textbf{u})|_{v}\} \leq 2(d_{1}n^{2}+d_{2}n)\delta^{1/2}\sum_{1\leq i \leq n}h(u_{i}).$$

We can see this choice of $m$ satisfies the conditions $m \geq d_{1}n$ and $m \geq \max\{d_{1},d_{2}\}$. Now letting $C(n,d_{1},d_{2})=2(d_{1}n^{2}+d_{2}n)$, we have
$$\displaystyle -\sum_{v \in M_{k} \setminus S}\log^{-}\max \{|f(\textbf{u})|_{v},|g(\textbf{u})|_{v}\} \leq C(n,d_{1},d_{2})\delta^{1/2}\sum_{1\leq i\leq n}h(u_{i})$$
as long as $\textbf{u}$ does not lie in the proper closed subset coming from the exceptional set in the application of the Subspace Theorem.\\

Finally, we note that the choice of linear forms in the application of Schmidt's Subspace Theorem depends not on $\textbf{u}$, but on the choice of the monomial bases $B_{v}$, $v \in S$. Since for fixed $m$ there are only finitely many monomials of degree at most $m$, and hence only finitely many choices for these bases, we see that for fixed $m$ the given argument leads to only finitely many applications of Schmidt's Subspace Theorem (over all choices of $\textbf{u}$). Therefore there exists a proper Zariski closed subset $Z$ of $\mathbb{G}_{m}^{n}$ such that the inequality is valid for all $\textbf{u}=(u_{1},\ldots,u_{n}) \in \mathbb{G}_{m}^{n}(k)_{S,\delta}\setminus Z$.\\

Now consider the case when $(f,g)_{(m)}=k[x_{1},\ldots,x_{n}]_{m}$. We can find polynomials $\tilde{f},\tilde{g} \in k[x_{1},\ldots,x_{n}]$ such that 
$$f\tilde{f}+g\tilde{g}=1$$
with $\deg \tilde{f},\deg \tilde{g} \leq m$. Hence, for any $v \in M_{k}$ and $\textbf{u} \in \mathbb{G}_{m}^{n}(s)_{S,\delta}$, we have
$$\begin{aligned}
1=|(f\tilde{f}+g\tilde{g})(\textbf{u})|_{v}&\leq \max\{|f(\textbf{u})|_{v}|\tilde{f}(\textbf{u})|_{v},|g(\textbf{u})|_{v}|\tilde{g}(\textbf{u})|_{v}\}\\
&\leq \max\{|f(\textbf{u})|_{v},|g(\textbf{u})|_{v}\}\max\{|\tilde{f}(\textbf{u})|_{v},|\tilde{g}(\textbf{u})|_{v}\}.\\
\end{aligned}$$
Then we have
$$\max\{|f(\textbf{u})|_{v},|g(\textbf{u})|_{v}\}\geq \min\{|1/\tilde{f}(\textbf{u})|_{v},|1/\tilde{g}(\textbf{u})|_{v}\}.$$
Applying $-\log^{-}$ on both sides and summing over $v \in M_{k} \setminus S$, it follows that 
$$\begin{aligned}
-\sum_{v \in M_{k}\setminus S}\log ^{-}\max\{|f(\textbf{u})|_{v},|g(\textbf{u})|_{v}\} &\leq -\sum_{v \in M_{k}\setminus S}\log ^{-}\min\{|1/\tilde{f}(\textbf{u})|_{v},|1/\tilde{g}(\textbf{u})|_{v}\}\\
& =-\sum_{v \in M_{k}\setminus S}\min\{\log|1/\tilde{f}(\textbf{u})|_{v},\log|1/\tilde{g}(\textbf{u})|_{v},0\}\\
&=\sum_{v \in M_{k}\setminus S}\max\{\log|\tilde{f}(\textbf{u})|_{v},\log|\tilde{g}(\textbf{u})|_{v},0\}.\\
\end{aligned}$$
Now since $\deg \tilde{f},\deg \tilde{g} \leq m$, together with $\displaystyle \sum_{1\leq i \leq n}h_{\bar{S}}(u_{i}) \leq \delta \sum_{1\leq i\leq n}h(u_{i})$ (by Remark \ref{stand}), we obtain
$$-\sum_{v \in M_{k}\setminus S}\log ^{-}\max\{|f(\textbf{u})|_{v},|g(\textbf{u})|_{v}\} \leq mn\delta\sum_{1\leq i \leq n}h(u_{i}),$$
which is an even better estimate than the one obtained in the proof of the first case.
\end{proof}

By letting $\delta=\dfrac{\epsilon^{2}}{4n^{2}(n^{2}\deg f+n\deg g)^{2}}$, we obtain an immediate result:
\begin{Cor}\label{notins}
Let $k$ be a number field and let $S$ be a finite set of places of $k$ containing the archimedean places. Let $f$, $g \in k[x_{1}, \ldots, x_{n}]$ be coprime polynomials. For all $\epsilon >0$, there exist $\delta >0$ and a proper Zariski closed subset $Z$ of $\mathbb{G}_{m}^{n}$ such that
$$\displaystyle - \sum_{v \in M_{k} \setminus S}\log^{-}\max\{|f(u_{1},\ldots, u_{n})|_{v}, |g(u_{1},\ldots,u_{n})|_{v}\}< \epsilon \max\{h(u_{1}),\ldots,h(u_{n})\}$$
for all $\textbf{u}=(u_{1},\ldots,u_{n})\in \mathbb{G}_{m}^{n}(k)_{S,\delta} \setminus Z$.\\
\end{Cor}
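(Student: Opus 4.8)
The plan is to obtain Corollary \ref{notins} as an immediate specialization of Theorem \ref{notinsgen}; the only things to do are to choose $\delta$ in terms of $\epsilon$ and to pass from the standard height $\sum_i h(u_i)$ to $\max_i h(u_i)$.

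Fix $\epsilon > 0$ and write $A = n^2\deg f + n\deg g$, so that the constant in Theorem \ref{notinsgen} is $C = 2A$. I would set $\delta = \min\{1/2,\ \epsilon^2/(4n^2A^2)\}$; then $0 < \delta < 1$, so Theorem \ref{notinsgen} furnishes a proper Zariski closed subset $Z \subset \mathbb{G}_m^n$ such that
$$-\sum_{v \in M_k \setminus S}\log^-\max\{|f(u_1,\ldots,u_n)|_v,\,|g(u_1,\ldots,u_n)|_v\} < C\delta^{1/2}\sum_{1 \le i \le n} h(u_i)$$
for every $\textbf{u}=(u_1,\ldots,u_n) \in \mathbb{G}_m^n(k)_{S,\delta}\setminus Z$.

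The remaining step is elementary: bounding $\sum_{1 \le i \le n} h(u_i) \le n\max\{h(u_1),\ldots,h(u_n)\}$ turns the right-hand side into $Cn\delta^{1/2}\max_i h(u_i)$, and it suffices to verify $Cn\delta^{1/2}\le\epsilon$. When $\delta = \epsilon^2/(4n^2A^2)$ one gets $Cn\delta^{1/2}=2A\cdot n\cdot \epsilon/(2nA)=\epsilon$ exactly; when instead $\delta = 1/2$, the inequality $\epsilon^2/(4n^2A^2)\ge 1/2$ that triggers this case gives $\epsilon \ge Cn/\sqrt2 = Cn\delta^{1/2}$, so $Cn\delta^{1/2}\le\epsilon$ again. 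Since the inequality produced by Theorem \ref{notinsgen} is strict, the strict inequality of Corollary \ref{notins} follows in both cases. I do not expect any real obstacle here: the statement is a corollary in the literal sense. The only point needing (minor) attention is the truncation $\delta \le 1/2$, which is there solely to keep $\delta < 1$ so that Theorem \ref{notinsgen} remains applicable even when $\epsilon$ is large, at the cost of letting the exceptional set depend on $\epsilon$ through the chosen $\delta$.
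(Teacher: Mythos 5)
Your proof is correct and is essentially identical to the paper's: the paper also deduces Corollary \ref{notins} from Theorem \ref{notinsgen} by setting $\delta=\epsilon^{2}/\bigl(4n^{2}(n^{2}\deg f+n\deg g)^{2}\bigr)$ and using $\sum_{i}h(u_{i})\leq n\max_{i}h(u_{i})$. Your extra truncation $\delta\leq 1/2$ is a harmless (and slightly more careful) safeguard ensuring $\delta<1$ when $\epsilon$ is large.
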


The next theorem allows us to control the $S$-part of the greatest common divisor in Theorem \ref{combined}.
\begin{Th}\label{insgen}
Let $k$ be a number field and let $S$ be a finite set of places of $k$ containing the archimedean places. Let $f \in k[x_{1}, \ldots, x_{n}]$ be a polynomial of degree $d$ that doesn't vanish at the origin $(0,\ldots,0)$. For all $0< \delta <1$, there exists a proper Zariski closed subset $Z$ of $\mathbb{G}_{m}^{n}$ such that
$$\displaystyle -\sum_{v \in S} \log^{-}|f(u_{1},\ldots,u_{n})|_{v} <4nd\delta\sum_{1\leq i \leq n}h(u_{i})$$
for all $\textbf{u}=(u_{1},\ldots,u_{n})\in \mathbb{G}_{m}^{n}(k)_{S,\delta} \setminus Z$.\\
\end{Th}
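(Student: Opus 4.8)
The plan is to run an argument parallel to the proof of Theorem~\ref{notinsgen}: at each place $v\in S$ one detects when $f(\textbf{u})$ is $v$-adically small --- which, because $f(0)\neq 0$, can only happen through genuine near-cancellation among the non-constant monomials of $f$ --- and one bounds how often this occurs via Schmidt's Subspace Theorem, feeding in the almost-$S$-unit hypothesis through the product formula. First I would normalize $f(0)=1$; replacing $f$ by $f/f(0)$ changes $-\sum_{v\in S}\log^{-}|f(\textbf{u})|_{v}$ by $O(1)$, which is harmless. Write $f=1+\sum_{l=1}^{r}c_{l}\textbf{x}^{\textbf{j}_{l}}$ with $\textbf{j}_{l}\neq\textbf{0}$, $|\textbf{j}_{l}|\le d$. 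For a first, non-optimal version, set $P=(1:c_{1}\textbf{u}^{\textbf{j}_{1}}:\cdots:c_{r}\textbf{u}^{\textbf{j}_{r}})\in\mathbb{P}^{r}(k)$ and, for each $v\in S$, apply the Subspace Theorem with the $r+1$ independent linear forms $\{y_{l}:l\neq l_{0}(v)\}\cup\{y_{0}+\cdots+y_{r}\}$, where $l_{0}(v)$ realizes $|c_{l_{0}(v)}\textbf{u}^{\textbf{j}_{l_{0}(v)}}|_{v}=|P|_{v}$. The last form evaluates to $f(\textbf{u})$, and since $|P|_{v}\ge 1$ (this is where $f(0)\neq0$ is used) one gets $-\log^{-}|f(\textbf{u})|_{v}\le\lambda_{H,v}(P)+O_{v}(1)$ for $H$ the corresponding hyperplane; the contributions of the coordinate forms, rewritten by the product formula, turn the $S$-sum into sums over $v\notin S$ of local heights of $\textbf{u}$, bounded by a constant (depending on $r$ and $d$) times $h_{\bar{S}}(\textbf{u})\le\delta h(\textbf{u})$. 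This already gives $-\sum_{v\in S}\log^{-}|f(\textbf{u})|_{v}\le C(r,d)\,\delta\,h(\textbf{u})$ for $\textbf{u}$ outside a proper closed set.

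The defect is that $C(r,d)$ depends on the number $r$ of monomials of $f$, whereas the statement asks for a constant depending only on $n$ and $d$. To remove this I would bring in a degree parameter $m$ together with the higher powers of $f$, exactly as the pair $(f,g)$ is handled in the proof of Theorem~\ref{notinsgen}. Work with all monomials of degree $\le m$; set $R_{(m)}=k[x_{1},\ldots,x_{n}]_{m}$, $N=\binom{m+n}{n}$, and filter $R_{(m)}$ by $W_{t}=f^{t}R_{(m-td)}$ for $0\le t\le\lfloor m/d\rfloor$, so $W_{t}/W_{t+1}\cong R_{(m-td)}/(f)_{(m-td)}$. For each $v\in S$ build a basis of $R_{(m)}$ out of elements $f^{t}\textbf{x}^{\textbf{i}}$, where at each level $t$ the monomials $\textbf{i}$ range over a set $M_{t}^{v}$ of $e_{t}=\dim(W_{t}/W_{t+1})$ monomials of degree $\le m-td$ whose classes form a basis of $W_{t}/W_{t+1}$, chosen by the same $v$-adic greedy rule as in Theorem~\ref{notinsgen}. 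Apply the Subspace Theorem to $Q=(\textbf{u}^{\textbf{i}})_{|\textbf{i}|\le m}\in\mathbb{P}^{N-1}(k)$ with these $N$ forms. Because $f^{t}\textbf{x}^{\textbf{i}}$ has value $f(\textbf{u})^{t}\textbf{u}^{\textbf{i}}$ at $\textbf{u}$, summing $\log(|Q|_{v}/|L^{v}_{j}(Q)|_{v})$ over the basis yields $T\cdot\bigl(-\sum_{v\in S}\log^{-}|f(\textbf{u})|_{v}\bigr)$ with $T=\sum_{t}t\,e_{t}=\sum_{t\ge1}\dim W_{t}\sim\frac{m}{d(n+1)}N$, up to three error contributions: (i) an error from the greedy selection, controlled by Lemma~\ref{CLZlemma} and of size $o(1)\cdot N\sum_{i}h(u_{i})$ as $m\to\infty$; (ii) sums over $v\notin S$, reorganized via the product formula and Lemma~\ref{comb}, bounded by a multiple of $Nm\,h_{\bar{S}}(\textbf{u})\le Nm\,\delta\sum_{i}h(u_{i})$; and (iii) the contribution of the places $v\in S$ with $|f(\textbf{u})|_{v}>1$. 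Dividing by $T$, term (ii) becomes $\sim d(n+1)\delta\sum_{i}h(u_{i})$ and term (i) becomes $o(1)\sum_{i}h(u_{i})$; choosing $m$ large in terms of $\delta$ (of order $\delta^{-1/2}$) makes the total $\le 4nd\,\delta\sum_{i}h(u_{i})$. Since for fixed $m$ there are finitely many possible greedy bases, hence finitely many invocations of the Subspace Theorem, the union of the resulting hyperplanes in $\mathbb{P}^{N-1}$, pulled back to $\mathbb{G}_{m}^{n}$ and enlarged by $V(f)$, gives the proper Zariski closed $Z$.

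The one genuinely delicate step is (iii). In Theorem~\ref{notinsgen} the analogous ``$\log^{+}|f|$'' terms appear only at places outside $S$, where $\log^{+}|f(\textbf{u})|_{v}\le d\,\lambda_{v}(\textbf{u})$ is absorbed into $h_{\bar{S}}(\textbf{u})$; here the relevant places lie in $S$, where $|f(\textbf{u})|_{v}$ can be of size $e^{d\lambda_{v}(\textbf{u})}$ with $\lambda_{v}(\textbf{u})$ not small, so a careless estimate would leave behind a term of size $d\,h(\textbf{u})$. The fix --- and the reason the choice of which monomial gets replaced by a power of $f$ must depend on $v$, in Corvaja--Zannier style --- is to arrange the linear forms so that at each $v\in S$ the ``large part'' of $\log|f(\textbf{u})|_{v}$ is precisely cancelled by $\log|Q|_{v}$ inside the Subspace inequality, leaving only the true local approximation $-\log^{-}|f(\textbf{u})|_{v}$. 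Making this cancellation and the accompanying combinatorial count precise enough to reach the constant $4nd$ is where the real effort lies; the rest is the same bookkeeping with heights and the product formula as in the proof of Theorem~\ref{notinsgen}.
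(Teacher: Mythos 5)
Your overall strategy (amplify $-\log|f(\mathbf{u})|_v$ by pairing monomials with powers of $f$, apply the Subspace Theorem to the vector of monomial values, and feed the almost-$S$-unit hypothesis in through the product formula) is the same skeleton as the paper's proof, and your first-paragraph ``non-optimal'' version with constant depending on the number of monomials is fine. But there are two genuine gaps. First, the step you flag as (iii) is not a deferrable detail: it is the crux, and the fix you sketch does not work as described. If powers of $f$ are attached to the forms at a place $v\in S$ with $|f(\mathbf{u})|_v>1$, then cancelling $t\log^{+}|f(\mathbf{u})|_v$ against $\log|Q|_v=m\lambda_v(\mathbf{u})$ only yields the trivial lower bound $\geq -O(m)$ for each term at that place, so you forfeit the contribution $\approx N\log|Q|_v$ there; after dividing by $T$ the shortfall is of size $d(n+1)\lambda_v(\mathbf{u})$, i.e.\ up to a constant times $d\,h(\mathbf{u})$, which destroys the inequality. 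The paper's resolution is concrete and different: it partitions $\mathbb{G}_m^n(k)_{S,\delta}$ into the finitely many sets $R_{S'}$, $S'\subseteq S$, according to which places satisfy $|f(\mathbf{u})|_v<1$, and for each $S'$ takes at the places of $S'$ the basis $B_v^{\mathbf{i}}=(\mathbf{x}^{\mathbf{i}}/x_0^{k_{\mathbf{i}}d})F^{k_{\mathbf{i}}}$ (\emph{all} monomials of degree $md$ with forced powers of the homogenization $F$; a basis precisely because $f(0)\neq0$), and the plain monomial basis at the places of $S\setminus S'$. Then the monomial index set is the same at every place, so the product formula and the almost-unit property of the $md$-uple image $\phi(P)$ apply globally, while the $F$-powers contribute exactly $I\cdot(-\log|f(\mathbf{u})|_v)\geq 0$ with $I\geq\binom{n+(m-1)d}{n}$; no cancellation argument at bad places is ever needed.

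Second, the quantitative scheme cannot reach the stated bound. Your greedy selection takes monomial bases of quotients by the single polynomial $f$, a codimension-one quotient: the $v$-dependent index sets then constitute the entire basis rather than a lower-order correction, so there is no analogue of the savings in Lemma \ref{CLZlemma} (which concerns quotients by \emph{two} coprime forms), and the resulting discrepancy is of size comparable to $\frac{m}{n+1}N\sum_i h(u_i)$, i.e.\ $O(d)\sum_i h(u_i)$ after dividing by $T$, not $o(1)\sum_i h(u_i)$. Moreover, even granting your estimate that term (i) behaves like $O(1/m)\sum_i h(u_i)$ after normalization, your choice $m\sim\delta^{-1/2}$ gives a final bound of order $\delta^{1/2}\sum_i h(u_i)$, not $4nd\,\delta\sum_i h(u_i)$; but the entire point of Theorem \ref{insgen} is the \emph{linear} dependence on $\delta$ (Theorem \ref{notinsgen} already gives $\delta^{1/2}$, and Theorem \ref{gcdcomb} needs the $S$-part to be of lower order). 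In the paper there is no selection error at all, hence no term forcing $m$ to grow with $\delta$: $m$ is a constant depending only on $n$ and $d$, chosen so that $\binom{n+md}{n}\big/\binom{n+(m-1)d}{n}\leq 2$ and $m\leq 2n$, which is exactly what produces the constant $4nd$ in front of $\delta$.
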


\begin{proof}
In the following proof we will not consider the points $\{(u_{1},\ldots,u_{n}) \in \mathbb{G}_{m}^{n}(k)_{S,\delta} : f(u_{1},\ldots,u_{n})=0\}$. Since this set can be covered by a proper Zariski closed subset, by taking it into the exceptional set, we can ignore such points.\\

For a subset $S'$ of $S$, let $R_{S'}$ consist of the set of points $(u_{1},\ldots, u_{n}) \in \mathbb{G}_{m}^{n}(k)_{S,\delta}$ such that
$$S'=\{v \in S:\log |f(u_{1},\ldots,u_{n})|_{v}<0\}.$$
Then for $(u_{1},\ldots,u_{n}) \in R_{S'}$,
$$\log^{-}|f(u_{1},\ldots,u_{n})|_{v}=\begin{cases}
\log|f(u_{1},\ldots,u_{n})|_{v}, & v \in S',\\
0, & v \in S \setminus S'.
\end{cases}$$

Let $d=\deg f$, $m \in \mathbb{N}$ and $\phi: \mathbb{P}^{n} \to \mathbb{P}^{N}$, $\phi=(\phi_{0},\ldots,\phi_{N})$, $N=\binom{n+md}{n}-1$, be the $md$-uple embedding of $\mathbb{P}^{n}$ given by the set of monomials of degree $md$ in $k[x_{0},\ldots,x_{n}]$. Let $F=x_{0}^{d}f(x_{1}/x_{0},\ldots,x_{n}/x_{0})$ be the homogenization of $f$ in $k[x_{0},\ldots,x_{n}]$. Let $V_{md}$ be the vector space of homogeneous polynomials of degree $md$, and let $\text{Mon}_{md}$ consist of the set of all monomials in $k[x_{0},\ldots,x_{n}]$ of degree $md$.\\

If $v \in S'$, we construct a basis for $V_{md}$ as follows. Let $k_{\textbf{i}}=\left\lfloor \dfrac{\text{ord}_{x_0}\textbf{x}^{\textbf{i}}}{d} \right\rfloor$ and define $B_{v}^{\textbf{i}}=\dfrac{\textbf{x}^{\textbf{i}}}{x_{0}^{k_{\textbf{i}}d}}F^{k_{\textbf{i}}}$. Let $B_{v}$ be the set of all $B_{v}^{\textbf{i}}$. Since $f$ doesn't vanish at the origin, $x_{0}^{d}$ appears with a nonzero coefficient in $F$, and thus it's clear that $B_{v}$ is a basis for $V_{md}$.\\

If $v \in S \setminus S'$, then we let $B_{v}=\text{Mon}_{md}$. Applying the Subspace Theorem on $\mathbb{P}^{N}$ with appropriate linear forms, we find that for a fixed $\epsilon >0$
$$\displaystyle \sum_{v \in S}\sum_{Q \in B_{v}}\log \dfrac{|\phi(P)|_{v}}{|Q(P)|_{v}} \leq (N+1+\epsilon)h(\phi(P))$$
for all $P \in \mathbb{P}^{n}(k) \setminus Z$, where $Z=\phi^{-1}(Z')$ and $Z'$ is a finite union of hyperplanes in $\mathbb{P}^{N}$. From the definition of $B_{v}$, we can rewrite the left-hand side of above as
$$\displaystyle \sum_{v \in S}\sum_{Q \in \text{Mon}_{md}}\log \dfrac{|\phi(P)|_{v}}{|Q(P)|_{v}}-\sum_{\mathbf{i}}\sum_{v \in S'}\log \dfrac{|B_{v}^{\textbf{i}}(P)|_{v}}{|\textbf{x}^{\textbf{i}}(P)|_{v}} \leq (N+1+\epsilon)h(\phi(P)).$$
Suppose now that $(u_{1},\ldots,u_{n}) \in R_{S'}$ and let $P=[1:u_{1}:\ldots:u_{n}] \in \mathbb{P}^{n}(k)$. It follows immediately that for $B_{v}^{\textbf{i}}$ with $k_{\textbf{i}}d \leq \text{ord}_{x_{0}}\textbf{x}^{\textbf{i}}< (k_{\textbf{i}}+1)d$,
$$\displaystyle -\sum_{v \in S'}\log \dfrac{|B_{v}^{\textbf{i}}(P)|_{v}}{|\textbf{x}^{\textbf{i}}(P)|_{v}}=-k_{\textbf{i}}\sum_{v \in S'}\log |f(u_{1},\ldots,u_{n})|_{v} .$$
Letting $I =\sum_{\textbf{i}}k_{\textbf{i}}$,
$$\begin{aligned}
\displaystyle -\sum_{\textbf{i}}\sum_{v \in S'}\log \dfrac{|B_{v}^{\textbf{i}}(P)|_{v}}{|\textbf{x}^{\textbf{i}}(P)|_{v}}&=-I\sum_{v \in S'}\log |f(u_{1},\ldots,u_{n})|_{v}\\
&=-I \sum_{v \in S}\log^{-}|f(u_{1},\ldots,u_{n})|_{v}.
\end{aligned}$$
By an easy calculation, we find that
$$\begin{aligned}
\displaystyle I&=m+\left(\binom{n+d}{n}-1\right)(m-1)+\cdots+\left(\binom{n+(m-1)d}{n}-\binom{n+(m-2)d}{n}\right)\\
&=1+\binom{n+d}{n}+\cdots+\binom{n+(m-1)d}{n}\\
&\geq \binom{n+(m-1)d}{n}.
\end{aligned}$$

Note that $\phi$ induces a natural map $\mathbb{G}_{m}^{n} \to \mathbb{G}_{m}^{N}$ and $\phi(\mathbb{G}_{m}^{n}(k)_{S,\delta}) \subset \mathbb{G}_{m}^{N}(k)_{S,\delta}$. Indeed,
$$\displaystyle \sum_{v \in M_{k} \setminus S}\log |\phi(P)|_{v}=\sum_{v \in M_{k}\setminus S}\log \max_{Q \in \text{Mon}_{md}} \{|Q(P)|_{v}\} \leq md\sum_{v \in M_{k}\setminus S}\log \max_{i}|u_{i}|_{v}.$$
Similarly, we have
$$\displaystyle \sum_{v \in M_{k}\setminus S}\log  \left|\dfrac{1}{\phi(P)}\right|_{v} \leq md\sum_{v \in M_{k}\setminus S}\log \max_{i} \left|\dfrac{1}{u_{i}}\right|_{v}.$$
Thus we have
$$\displaystyle \begin{aligned}
\sum_{v \in M_{k}\setminus S}\lambda_{v}(\phi(P))+\lambda_{v}\left(\dfrac{1}{\phi(P)}\right) &\leq \sum_{v \in M_{k}\setminus S}md\left(\lambda_{v}(P)+\lambda_{v}\left(\dfrac{1}{P}\right)\right)\\ 
&\leq md\delta h(P)\leq \delta h(\phi(P)).
\end{aligned}$$
Now since $\phi(P) \in \mathbb{G}_{m}^{N}(k)_{S,\delta}$ and $\displaystyle \min_{i}|\phi_{i}(P)|_v \leq |Q(P)|_v $ for all $Q\in \text{Mon}_{md}$, then
$$\begin{aligned}
\displaystyle &\sum_{v \in S}\sum_{Q \in Mon_{md}}\log \dfrac{|\phi(P)|_{v}}{|Q(P)|_{v}}\\
&=\sum_{v \in M_{k}}\sum_{Q \in Mon_{md}}\log \dfrac{|\phi(P)|_{v}}{|Q(P)|_{v}}-\sum_{v \in M_{k}\setminus S}\sum_{Q \in Mon_{md}}\log \dfrac{|\phi(P)|_{v}}{|Q(P)|_{v}}\\
&=(N+1)h(\phi(P)) - \sum_{v \in M_{k}\setminus S}\sum_{Q \in Mon_{md}}\log \dfrac{|\phi(P)|_{v}}{|Q(P)|_{v}}\\
&\geq(N+1)h(\phi(P))-(N+1)\left(\sum_{v \in M_{k}\setminus S}\log |\phi(P)|_{v}+\sum_{v \in M_{k}\setminus S}\log \left|\dfrac{1}{\phi(P)}\right|_{v}\right)\\
&\geq(N+1)(1-\delta)h(\phi(P)).
\end{aligned}$$\\
Therefore, we have
$$\displaystyle (N+1)(1-\delta)h(\phi(P))-I\sum_{v \in S}\log^{-}|f(u_{1},\ldots,u_{n})|_{v} \leq (N+1+\epsilon)h(\phi(P))$$
for all $(u_{1},\ldots,u_{n}) \in R_{S'}$ outside of some proper Zariski closed subset $Z$.
It follows that for a sufficiently small $\epsilon$
$$\begin{aligned}\displaystyle -\sum_{v \in S}\log^{-}|f(u_{1},\ldots,u_{n})|_{v}&<  \dfrac{(N+1+\epsilon/\delta)}{I}\delta h(\phi(P))\\
&<\dfrac{\binom{n+md}{n}}{\binom{n+(m-1)d}{n}}\delta mdh(P)\\
&=\dfrac{(n+(m-1)d+1)\cdots(n+md)}{((m-1)d+1)\cdots(md)}\delta mdh(P)
\end{aligned}$$
for all $(u_{1},\ldots,u_{n}) \in R_{S'}$ outside of some proper Zariski closed subset $Z$.
Choosing $m=\left\lceil\dfrac{n-2^{1/d}+1}{d(2^{1/d}-1)}+1\right \rceil$, we have 
$$\displaystyle \dfrac{n+(m-1)d+1}{(m-1)d+1} \leq 2^{1/d}.$$
Hence, $\displaystyle \dfrac{(n+(m-1)d+1)\cdots(n+md)}{((m-1)d+1)\cdots(md)} \leq 2$. Also notice that $m \leq 2n$, we obtain
$$\displaystyle -\sum_{v \in S}\log^{-}|f(u_{1},\ldots,u_{n})|_{v} <2\delta mdh(P) <4nd\delta h(P) < 4nd\delta \sum_{1\leq i \leq n}h(u_{i})$$
for all $(u_{1},\ldots,u_{n}) \in R_{S'}$ outside of some proper Zariski closed subset $Z$. In fact, since there are only finitely many choices of the subset $S' \subset S$, we find that the inequality holds for all $P \in \mathbb{G}_{m}^{n}(k)_{S,\delta} \setminus Z$, for some proper closed subset $Z$.\\
\end{proof}

From Theorem \ref{insgen}, the immediate result combined with Theorem \ref{notinsgen} is 
\begin{Th}\label{gcdcomb}
Let $k$ be a number field and let $S$ be a finite set of places of $k$ containing the archimedean places. Let $f,g \in k[x_{1}, \ldots, x_{n}]$ be polynomials that don't both vanish at the origin $(0,\ldots,0)$. For all $0< \delta <1$, there exists a proper Zariski closed subset $Z$ of $\mathbb{G}_{m}^{n}$ such that
$$\displaystyle -\sum_{v \in M_{k}} \log^{-}\max\{|f(u_{1},\ldots,u_{n})|_{v},|g(u_{1},\ldots,u_{n})|_{v}\} < C\delta^{1/2}\sum_{1\leq i\leq n}h(u_{i})$$
for all $\textbf{u}=(u_{1},\ldots,u_{n})\in \mathbb{G}_{m}^{n}(k)_{S,\delta} \setminus Z$, where $C=6(\deg f+\deg g)n^{2}$ is a constant.\\
\end{Th}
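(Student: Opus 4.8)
The plan is to derive Theorem \ref{gcdcomb} by simply combining the two ingredients already in place: Theorem \ref{notinsgen}, which controls the non-$S$ part of the generalized gcd, and Theorem \ref{insgen}, which controls the $S$ part. First I would recall that, by definition of the generalized logarithmic gcd,
$$-\sum_{v\in M_k}\log^-\max\{|f(\mathbf u)|_v,|g(\mathbf u)|_v\}
=-\sum_{v\in M_k\setminus S}\log^-\max\{|f(\mathbf u)|_v,|g(\mathbf u)|_v\}
-\sum_{v\in S}\log^-\max\{|f(\mathbf u)|_v,|g(\mathbf u)|_v\},$$
so it suffices to bound each of the two pieces separately and add.

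For the non-$S$ part, Theorem \ref{notinsgen} applied to the coprime pair $f,g$ gives, outside a proper Zariski closed set $Z_1$, the bound $2(n^2\deg f+n\deg g)\,\delta^{1/2}\sum_i h(u_i)$. For the $S$ part, note $\log^-\max\{|f|_v,|g|_v\}\ge \log^-|f|_v + \log^-|g|_v$ (since $\max\{a,b\}\ge$ whichever of $a,b$ is $\le 1$, and $\log^-$ of the larger quantity is $\ge$ the sum only after a sign flip — more precisely $-\log^-\max\{|f|_v,|g|_v\}\le -\log^-|f|_v-\log^-|g|_v$ when we only want an upper bound on the $S$-part, or even more simply $-\log^-\max\{|f|_v,|g|_v\}\le \min\{-\log^-|f|_v,-\log^-|g|_v\}\le -\log^-|f|_v$). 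Since by hypothesis not both of $f,g$ vanish at the origin, at least one of them, say $f$, does not vanish at $(0,\dots,0)$; then $-\sum_{v\in S}\log^-\max\{|f(\mathbf u)|_v,|g(\mathbf u)|_v\}\le -\sum_{v\in S}\log^-|f(\mathbf u)|_v$, and Theorem \ref{insgen} applied to $f$ gives, outside a proper Zariski closed set $Z_2$, the bound $4n(\deg f)\,\delta\sum_i h(u_i)$. Since $0<\delta<1$ we have $\delta\le\delta^{1/2}$, so this piece is at most $4n(\deg f)\,\delta^{1/2}\sum_i h(u_i)$.

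Adding the two bounds, with $Z=Z_1\cup Z_2$ (still a proper Zariski closed subset of $\mathbb{G}_m^n$), gives
$$-\sum_{v\in M_k}\log^-\max\{|f(\mathbf u)|_v,|g(\mathbf u)|_v\}
< \bigl(2n^2\deg f + 2n\deg g + 4n\deg f\bigr)\,\delta^{1/2}\sum_{i=1}^n h(u_i)$$
for all $\mathbf u\in\mathbb{G}_m^n(k)_{S,\delta}\setminus Z$. It remains only to absorb the coefficient into the stated constant: $2n^2\deg f+4n\deg f+2n\deg g \le 6n^2(\deg f+\deg g)=C$ for $n\ge 1$, so the claimed inequality follows. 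One small bookkeeping point, which I would check rather than gloss over, is the asymmetry in the hypothesis "don't both vanish at the origin": if it is $g$ rather than $f$ that is nonvanishing at $0$, one runs the $S$-part argument with $g$ in place of $f$, and the symmetry of the final constant $C=6(\deg f+\deg g)n^2$ in $f$ and $g$ means the conclusion is unchanged. I do not anticipate any real obstacle here — all the genuine work (the Subspace Theorem estimates, the combinatorial lemmas, the handling of the almost-$S$-unit condition via Remark \ref{stand}) has already been carried out in Theorems \ref{notinsgen} and \ref{insgen}; this statement is purely their corollary.
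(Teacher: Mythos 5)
Your proposal is correct and is essentially identical to the paper's own (very short) proof: both split the sum over $M_k$ into the $S$ and non-$S$ parts, apply Theorem \ref{notinsgen} to the coprime pair $f,g$ for the latter and Theorem \ref{insgen} to whichever of $f,g$ is nonvanishing at the origin for the former, and absorb the resulting coefficients into $C=6(\deg f+\deg g)n^{2}$ using $\delta\le\delta^{1/2}$. The only cosmetic difference is that the paper normalizes by assuming $g$ is the polynomial not vanishing at the origin, whereas you track the swap explicitly.
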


\begin{proof}
With not loss of generality, assume $\deg f \leq \deg g$ and $g$ doesn't vanish at the origin. Then applying Theorem \ref{insgen} to $g$, on the right hand side we obtain 
$$4n\delta\deg g \sum_{1\leq i\leq n}h(u_{i})<4n(\deg g +\deg f)\delta\sum_{1\leq i\leq n}h(u_{i}).$$
Combining with the inequality from Theorem \ref{notinsgen} finishes the proof.
\end{proof}

Now we are ready to show the desired result (Theorem \ref{combined}):
\begin{Cor}\label{ins}
Let $k$ be a number field and $S$ a finite set of places of $k$ containing the archimedean places. Let $f,g \in k[x_{1},\ldots, x_{n}]$ be polynomials that don't both vanish at the origin $(0,\ldots,0)$. For all $\epsilon >0$, there exist a $\delta >0$ and a proper Zariski closed subset $Z\subset\mathbb{G}_m^n$ such that:
$$\displaystyle -\sum_{v \in M_{k}} \log^{-}\max\{|f(u_{1},\ldots,u_{n})|_{v},|g(u_{1},\ldots,u_{n})|_{v} \}< \epsilon \max_{i}h(u_{i})$$
for all $(u_{1},\ldots,u_{n}) \in \mathbb{G}_{m}^{n}(k)_{S,\delta} \setminus Z$.\\
\end{Cor}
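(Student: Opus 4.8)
The plan is to deduce Corollary \ref{ins} directly from Theorem \ref{gcdcomb} (equivalently Theorem \ref{gcdgen}) by a rescaling of the parameter $\delta$, combined with the elementary inequality $\sum_{i=1}^{n}h(u_i)\le n\max_i h(u_i)$. Observe first that, by the definition recalled in Section \ref{pregcd}, the left-hand side $-\sum_{v\in M_k}\log^-\max\{|f(\mathbf{u})|_v,|g(\mathbf{u})|_v\}$ of the asserted inequality is exactly the expression that Theorem \ref{gcdcomb} controls, so no reformulation of the quantity is needed; only the shape of the bound changes, from $C\delta^{1/2}\sum_i h(u_i)$ to $\epsilon\max_i h(u_i)$.

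Concretely, given $\epsilon>0$, I would set $C=6(\deg f+\deg g)n^{2}$ as in Theorem \ref{gcdcomb} and choose
$$\delta=\min\left\{\frac{1}{2},\ \frac{\epsilon^{2}}{C^{2}n^{2}}\right\},$$
so that $0<\delta<1$ (the admissible range for Theorem \ref{gcdcomb}) and $Cn\,\delta^{1/2}\le\epsilon$. Applying Theorem \ref{gcdcomb} with this $\delta$ yields a proper Zariski closed subset $Z\subset\mathbb{G}_m^n$ with
$$-\sum_{v\in M_k}\log^-\max\{|f(\mathbf{u})|_v,|g(\mathbf{u})|_v\}<C\delta^{1/2}\sum_{i=1}^{n}h(u_i)$$
for all $\mathbf{u}\in\mathbb{G}_m^n(k)_{S,\delta}\setminus Z$. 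Bounding $\sum_{i=1}^{n}h(u_i)\le n\max_i h(u_i)$ and using $Cn\delta^{1/2}\le\epsilon$ then gives the claim. (Alternatively one could run the same argument on the chain Corollary \ref{notins}–Theorem \ref{insgen}, which is where Theorem \ref{gcdcomb} itself comes from, but invoking Theorem \ref{gcdcomb} is cleaner.)

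I do not anticipate any genuine obstacle: the arithmetic content is entirely contained in Theorem \ref{gcdcomb}, and this corollary merely records its asymptotically weaker, parameter-free consequence. The only points demanding (minimal) care are keeping $\delta$ strictly inside $(0,1)$ as required by Theorem \ref{gcdcomb}, and noting that the exceptional set $Z$ is simply inherited from that theorem — and, as remarked after the statement of Theorem \ref{combined}, may be taken further to be a union of positive-dimensional torus cosets by appeal to \cite{Ev}.
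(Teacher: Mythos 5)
Your proposal is correct and is essentially the paper's own proof: the paper likewise deduces the corollary from Theorem \ref{gcdcomb} by taking $\delta=\left(\epsilon/(6n^{3}(\deg f+\deg g))\right)^{2}=\epsilon^{2}/(Cn)^{2}$ and bounding $\sum_{i}h(u_{i})\le n\max_{i}h(u_{i})$. Your extra $\min\{1/2,\cdot\}$ guard to keep $\delta<1$ is a harmless refinement, and the appeal to \cite{Ev} is, as you note, only needed for the stronger structural description of $Z$, not for the statement as given.
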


\begin{proof}
By letting $\delta=\left(\dfrac{\epsilon}{6n^{3}(\deg f+\deg g)}\right)^{2}$, we obtain the inequality from Theorem \ref{gcdcomb}. Then we apply Theorem 5 in \cite{Ev}, $Z$ can be replaced by a (possibly infinite) union of positive-dimensional torus cosets.
\end{proof}

As discussed in the following remark, under a normal crossings assumption, a result of Silverman shows that Vojta’s conjecture predicts an improvement to Theorem \ref{gcdcomb}. \\

\begin{Rem}\label{Si}
From Theorem 2 in \cite{Si}, if we assume that Vojta's Conjecture is true, there is an improvement of the inequality as in Theorem \ref{notinsgen}. Let $k$ be a number field. Fix $\epsilon >0$. For $f$ and $g$ homogeneous coprime polynomials in $k[x_{0},\ldots,x_{n}]$ and $Y=\{f=g=0\}$ that intersects the coordinate hyperplanes transversally, there is a proper closed subset $Z$ such that we have for all $\textbf{x}\in \mathbb{P}^{n}(k) \setminus Z$,
$$\log\gcd (f(\textbf{x}),g(\textbf{x})) \leq \epsilon \max\{h(x_{0}),\ldots,h(x_{n})\}+\dfrac{1}{1+\gamma \epsilon}\sum_{1\leq i\leq n}h_{\bar{S}}(x_{i}),$$\\
where $\gamma$ is a positive constant. \\

Suppose $h_{\bar{S}}(\textbf{x}) \leq \delta h(\textbf{x})$. Using the estimate
$$\sum_{1\leq i\leq n}h_{\bar{S}}(x_{i}) \leq n h_{\bar{S}}(\textbf{x}),$$
we get
$$\log\gcd (f(\textbf{x}),g(\textbf{x})) \leq  \left(\epsilon+\dfrac{n\delta}{1+\gamma \epsilon} \right) \sum_{1\leq i\leq n}h(x_{i})\leq  \left(\epsilon+\dfrac{\delta}{1+\gamma \epsilon} \right) n\sum_{1\leq i\leq n}h(x_{i}).$$
Letting $\epsilon=\dfrac{-1+\sqrt{1+4\gamma \delta}}{2\gamma}$, we obtain a similar inequality as in Theorem \ref{notinsgen},
$$\begin{aligned}
\log\gcd (f(\textbf{x}),g(\textbf{x})) &\leq \left(\dfrac{-1+\sqrt{1+4\gamma \delta}}{2\gamma}+\dfrac{\delta}{1+\dfrac{-1+\sqrt{1+4\gamma \delta}}{2}}\right)n\sum_{1\leq i\leq n}h(x_{i})\\
&=(-1+\sqrt{1+4\gamma \delta})\left(\dfrac{1}{2\gamma}+\dfrac{2\delta}{4\gamma \delta}\right)n\sum_{1\leq i\leq n}h(x_{i})\\
&\leq\left(-1+1+\dfrac{4\gamma \delta}{2}\right)\dfrac{1}{\gamma}n\sum_{1\leq i\leq n}h(x_{i})\\
&=  2\delta n \sum_{1\leq i\leq n}h(x_{i}).
\end{aligned}$$
Thus, under a normal crossings assumption, Vojta’s conjecture predicts a linear dependence on $\delta$ in place of the square root dependence in Theorem \ref{gcdcomb} (note, however, that without a normal crossings assumption, the dependence on the degree of $f$ and $g$ in Theorem \ref{gcdcomb} is necessary, as can be seen by taking high powers of appropriate polynomials).
\end{Rem}

\begin{Ex}
In this example we show that the predicted linear dependence on $\delta$ is sharp (if true). Let $\mathbb{Q}$ be the field of rationals and $S=\{p,\infty\}$ be a finite set of places in $M_{\mathbb{Q}}$. Let $0 < \delta <1$. Let $x=p^{m},u=p^{n}$ for positive integers $m$ and $n$ such that $P:=(x,u(x+1))$ satisfies $1/2 \delta h(P) \leq h_{\bar{S}}(P) \leq \delta h(P)$. Let $x_{1},x_{2}$ be the coordinates in $\mathbb{G}_{m}^{2}$, then we take $f=x_{1}+1$, $g=x_{2}$. We make the estimate
$$
\begin{aligned}
\log \gcd (f(P),g(P))&=-\sum_{v\not \in S}\log^{-}\max\{|x+1|_{v},|u(x+1)|_{v}\}\\
&-\sum_{v \in S}\log^{-}\max\{|x+1|_{v},|u(x+1)|_{v}\}\\
&\geq \sum_{v \not \in S}\lambda_{v}\left(\dfrac{1}{x+1}\right)=h(x+1).
\end{aligned}$$
One shall also notice that for $x \in \mathcal{O}_{S,\mathbb{Q}}^{*}$, we have $h_{\bar{S}}(P)=h_{\bar{S}}(x+1)=h(x+1)$. Then
It follows that
$$\log \gcd (f(P),g(P)) \geq 1/2 \delta h(P).$$
It's easily seen that one may choose infinitely many appropriate $x$ and $u$ such that the set of resulting points $P$ forms a Zariski dense set in $\mathbb{G}_{m}^{2}$. Therefore the dependence on $\delta$ has to be at least linear.
\end{Ex}


\section{Linear Recurrence Sequences}
In this section, our main goal is to give the proof of Theorem \ref{recfin}, which requires Corollary \ref{notins} from the previous section.

\begin{Lemma}\label{rec1}
Let\\
\centerline{$\displaystyle F(n)=\sum_{i=0}^{s}p_{i}(n)\alpha_{i}^{n}$}\\
define a nondegenerate algebraic linear recurrence sequence. Let $|\cdot|$ be an absolute value on $\bar{\mathbb{Q}}$ such that $|\alpha _{i}| \geq 1$ for some $i$. Let $0<\epsilon<1$. Then
$$-\log|F(n)|<\epsilon n$$
for all but finitely many $n \in \mathbb{N}$.\\
\end{Lemma}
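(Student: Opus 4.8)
The plan is to deduce the lemma directly from Schmidt's Subspace Theorem, applied to the family of points naturally attached to $F$. To begin, I would replace $|\cdot|$ by the normalized absolute value $|\cdot|_v$ of a place $v$ of a number field $k$ containing all the $\alpha_i$ and all coefficients of the $p_i$; this only rescales $\epsilon$ and turns the hypothesis into $\rho:=\max_i|\alpha_i|_v\ge 1$. After passing to the generalized power sum form I may assume the $\alpha_i$ are distinct and the $p_i$ nonzero. Writing $p_i(x)=\sum_{0\le k\le\deg p_i}a_{i,k}x^k$ and $D=\sum_i(\deg p_i+1)$, I would study, for $n\in\mathbb{N}$, the point
$$P_n=\bigl(n^k\alpha_i^n\bigr)_{i,\,k}\in\mathbb{P}^{D-1}(k),$$
so that $F(n)$ is the value at $P_n$ of the fixed linear form $L=\sum_{i,k}a_{i,k}x_{i,k}$.

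Next I would fix a finite set $S$ of places of $k$ containing the archimedean ones, the place $v$, and all places at which some $\alpha_i$ is a non-unit, so that every $\alpha_i$ is an $S$-unit, and I would pick a distinguished index $(i_0,k_0)$ with $|\alpha_{i_0}|_v=\rho$ and $a_{i_0,k_0}\ne 0$ — possible since each $p_i$ is nonzero and the maximum $\rho$ is attained. For each $w\in S$ I feed the Subspace Theorem the $D$ coordinate forms $x_{i,k}$, except that at $v$ I replace $x_{i_0,k_0}$ by $L$ (the $D$ forms at $v$ are then linearly independent precisely because $a_{i_0,k_0}\ne 0$). The key bookkeeping step is to evaluate the resulting Subspace Theorem sum: using the product formula, the fact that each $\alpha_i$ is an $S$-unit (so $\sum_{w\in S}\log|\alpha_i|_w=0$), the fact that $|n|_w\le 1$ at nonarchimedean $w$, and the fact that $i_0$ lies among the dominant roots (so $|P_n|_v/|x_{i_0,k_0}(P_n)|_v$ is only $n^{O(1)}$), one finds that this sum equals $D\,h(P_n)+n\log\rho-\log|F(n)|_v$ up to an error of size $O(\log n)$.

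I would then run the Subspace Theorem with auxiliary constant $\epsilon'=\epsilon/(2C)$, where $C$ is chosen with $h(P_n)\le Cn$. For $P_n$ outside the resulting finite union of hyperplanes $T_1\cup\dots\cup T_h$ of $\mathbb{P}^{D-1}$, the sum above is $<(D+\epsilon')h(P_n)+O(1)$; since $\log\rho\ge 0$, this rearranges to $-\log|F(n)|_v\le\epsilon' h(P_n)+O(\log n)\le(\epsilon/2)n+O(\log n)<\epsilon n$ for $n$ large. The residual $n$ are those with $P_n$ on some $T_j$, i.e.\ those at which a fixed nonzero generalized power sum with roots among the $\alpha_i$ vanishes; such a power sum is a nondegenerate linear recurrence — its distinct roots inherit from $F$ the property that no ratio among them is a root of unity — so by the Skolem–Mahler–Lech theorem it has only finitely many zeros. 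Hence only finitely many $n$ are residual, and the lemma follows.

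The step I expect to be the main obstacle is the evaluation of the Subspace Theorem sum in the middle paragraph: one must make the correct choice of the distinguished coordinate $(i_0,k_0)$ so that the ``extra'' contribution $-\log\bigl(|P_n|_v/|x_{i_0,k_0}(P_n)|_v\bigr)$ stays logarithmic in $n$ rather than growing linearly, and one must check that the monomial factors $n^k$ together with the contributions of the auxiliary places $w\ne v$ add up to only $O(\log n)$, so that they are absorbed harmlessly into the $\epsilon' h(P_n)$ term. By contrast, when $F$ has a unique root of maximal $v$-absolute value the conclusion is elementary — that dominant term controls $|F(n)|_v$ up to a factor $n^{O(1)}$ — so Schmidt's theorem is only genuinely needed when several roots share the maximal $v$-absolute value.
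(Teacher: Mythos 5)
Your proof is correct. Both you and the paper ultimately run Schmidt's Subspace Theorem and then dispose of the exceptional hyperplanes via Skolem--Mahler--Lech, but the two applications of the Subspace Theorem are set up differently. The paper works in $\mathbb{P}^{s}$ with the full terms $p_{i}(n)\alpha_{i}^{n}$ as coordinates and uses the $s+2$ hyperplanes in general position given by the coordinate hyperplanes together with $x_{0}+\cdots+x_{s}=0$; the polynomial factors $p_i(n)$ are absorbed by observing that $[p_0(n):\cdots:p_s(n)]$ has height $\le\delta h(P_n)$ for large $n$ (the ``almost $S$-unit'' viewpoint developed earlier in the paper), and the conclusion is read off from the proximity function to the sum hyperplane at the place $v$, using the lower bound $\log|p_j(n)|_v\ge -\deg p_j\log n+O(1)$. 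You instead linearize the polynomial coefficients into extra coordinates, work in $\mathbb{P}^{D-1}$ with the monomials $n^{k}\alpha_{i}^{n}$, and at the distinguished place swap one coordinate form for the linear form $L$ computing $F(n)$ --- the classical van der Poorten--Schlickewei/Evertse-style argument for lower bounds on $|F(n)|_v$. Your bookkeeping checks out: the careful choice of $(i_0,k_0)$ among the $v$-dominant roots keeps the term $-\log\bigl(|P_n|_v/|x_{i_0,k_0}(P_n)|_v\bigr)$ at size $O(\log n)$, the $S$-unit normalization of the $\alpha_i$ kills their contribution to the sum over $S$, and the hyperplane conditions on $P_n$ are vanishing conditions on nondegenerate power sums with roots among the $\alpha_i$, so Skolem--Mahler--Lech applies. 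What each approach buys: yours is self-contained and avoids the general-position configuration and the almost-$S$-unit reduction, at the cost of a larger ambient dimension $D-1$; the paper's fits the lemma into the framework it has already built (proximity functions, $\mathbb{G}_m^n(k)_{S,\delta}$) and keeps the dimension at $s$. Two small points you leave implicit but should state: $F(n)\neq 0$ for all but finitely many $n$ (again by Skolem--Mahler--Lech), so the quantity $-\log|F(n)|$ and the form $L$ evaluated at $P_n$ are defined; and the estimate $h(P_n)\le Cn$ for $n$ large, which is immediate since the $\alpha_i$ are fixed and the monomial factors contribute only $O(\log n)$.
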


\begin{proof}
Let $k$ be a number field and $S$ a finite set of places of $k$ such that $p_{i}(x) \in k[x]$, $\alpha_{i} \in \mathcal{O}_{k,S}^{*}$, $i=0,\ldots,s$, and $|\cdot|$ restricted to $k$ is equivalent to $|\cdot|_{v}$ for some $v \in S$ (note that if $|\cdot|$ is trivial, the lemma is obvious). The $s=0$ case is trivial and so we may assume that $s>0$. By taking sufficiently large $n$, we can always assume that $p_{i}(n)$ don't vanish simultaneously. It suffices to prove that \\
\centerline{$-\log |F(n)|_{v} <\epsilon n$}\\
for all but finitely many $n \in \mathbb{N}$.\\

Let $H_{i}$ be the coordinate hyperplane in $\mathbb{P}^{s}$ defined by $x_{i}=0$, $i=0,\ldots,s$. Let $H_{s+1}$ be the hyperplane in $\mathbb{P}^{s}$ defined by $x_{0}+x_{1}+\cdots+x_{s}=0$. Note that the $s+2$ hyperplanes $H_{0},\ldots, H_{s+1}$ are in general position. Let
$$\begin{aligned}
P&=[\alpha_{0}:\cdots:\alpha_{s}] \in \mathbb{P}^{s}(k)\\
P_{n}&=[p_{0}(n)\alpha_{0}^{n}:\cdots:p_{s}(n)\alpha_{s}^{n}] \in \mathbb{P}^{s}(k),\ n \in \mathbb{N}\\
Q_{n}&=[p_{0}(n):\cdots:p_{s}(n)] \in \mathbb{P}^{s}(k),\ n \in \mathbb{N}.
\end{aligned}$$
Let $h=\max\{1,h(P)\}$. Then the Schmidt Subspace Theorem gives that for some finite union of hyperplanes $Z$ in $\mathbb{P}^{s}$,
\begin{align}\label{subsp}
\displaystyle \sum_{i=0}^{s+1}m_{H_{i},S}(P_{n}) < (s+1+\epsilon/(4h))h(P_{n}) 
\end{align}

\noindent for all points $P_{n} \in \mathbb{P}^{s}(k) \setminus Z$. In fact, since $F$ is nondegenerate, by the Skolem-Mahler-Lech theorem, only finitely many points $P_{n}$ belong to the given hyperplanes in $\mathbb{P}^{s}$, and thus the inequality holds for all but finitely many $n$. By taking $n$ to be sufficiently large, we can assume that $h(Q_{n}) \leq \delta h(P_{n})$ with $\delta \leq \dfrac{\epsilon}{4(s+1)h}$, so that we assume $P_{n} \in \mathbb{G}_{m}^{n}(k)_{S,\delta}$. Since $\alpha_{i} \in \mathcal{O}_{k,S}^{*}$ for all $i$, $m_{H_{i},S}(P_{n}) \geq (1-\delta)h(P_{n})$, $i=0,\ldots,s$. Note also that
$$h(P_{n}) \leq nh(P)+h(Q_{n}) \leq \dfrac{n}{1-\delta}h(P)$$
for all $n$ sufficiently large. Substituting in \eqref{subsp}, we have
$$m_{H_{s+1},S}(P_{n}) < (\epsilon/(4h)+(s+1)\delta)\dfrac{n}{1-\delta}h(P)\leq\dfrac{n\epsilon}{2(1-\delta)}.$$
Note that $\delta=\dfrac{\epsilon}{4(s+1)h}\leq \dfrac{1}{4(s+1)}\leq 1/2$, so we have $1-\delta >1/2$ and then
$$m_{H_{s+1}}(P_{n}) < \epsilon n.$$
Pick $\alpha_{j}$ with $|\alpha_{j}|_{v} \geq 1$. Then
$$\displaystyle \max_{i}\log |p_{i}(n)\alpha_{i}^{n}|_{v} \geq \log|p_{j}(n)|_{v}|\alpha_{j}|_{v}^{n} \geq \log |p_{j}(n)|_{v}.$$
To give $p_{j}(n)$ an estimate, we can take the inequality
$$\log |p_{j}(n)|_{v} \geq -h(p_{j}(n)).$$
Then it follows that
$$\log |p_{j}(n)|_{v} \geq-\deg p_{j}\log n+O(1).$$
It follows that
$$\lambda_{H_{s+1},v}(P_{n})=\log \dfrac{\max_{i}|p_{i}(n)\alpha_{i}^{n}|_{v}}{|\sum_{i=0}^{s}p_{i}(n)\alpha_{i}^{n}|_{v}} \geq -\log |F(n)|_{v} -C' \log n$$
for some constant $C'$. Together with $m_{H_{s+1},S}(P_{n}) \geq \lambda_{H_{s+1},v}(P_{n})+O(1)$, we have for all $\epsilon >0$,
$$-\log |F(n)|_{v}< \epsilon n+C' \log n+O(1).$$
It follows that for all sufficiently large $n$,
$$-\log |F(n)|_{v} < \epsilon n.$$
\end{proof}

Now we can state Theorem 1.8 (i) of Grieve-Wang \cite{GW} on the greatest common divisor between the terms of two linear recurrence sequences with the same index and give an alternative proof:
\begin{Th}\label{recbad1}
Let
$$\displaystyle F(m)=\sum_{i=1}^{s}p_{i}(m)\alpha_{i}^{m}$$
$$\displaystyle G(n)=\sum_{j=1}^{t}q_{j}(n)\beta_{j}^{n}$$
define two algebraic linear recurrence sequences, where $p_{i}$ and $q_{j}$ are polynomials. Let $k$ be a number field such that all coefficients of $p_{i}$ and $q_{j}$ and $\alpha_{i}$, $\beta_{j}$ are in $k$, for $i=1,\ldots,s$, $j=1,\ldots,t$. Let
$$S_{0}=\{v \in M_{k}:\max\{|\alpha_{1}|_{v},\ldots,|\alpha_{s}|_{v},|\beta_{1}|_{v},\ldots,|\beta_{t}|_{v}\} < 1\}.$$
Let $\epsilon >0$. Then all but finitely many solutions $l \in \mathbb{N}$ of the inequality
$$\displaystyle \sum_{v \in M_{k} \setminus S_{0}}-\log^{-}\max\{|F(l)|_{v},|G(l)|_{v}\} > \epsilon l$$
lie in one of finitely many nontrivial arithmetic subprogressions:
$$a_{i}t+b_{i},\ \ \ t \in \mathbb{N}, i=1, \ldots, r$$
where $a_{i},b_{i} \in \mathbb{N}, a_{i} \neq 0$, and the linear recurrences $F(a_{i}\bullet+b_{i})$ and $G(a_{i}\bullet+b_{i})$ have a nontrivial common factor for $i=1,\ldots,r$.
Furthermore, if $F$ and $G$ are coprime and their roots generate a torsion-free group, then there are only finitely many  solutions to the inequality above.
\end{Th}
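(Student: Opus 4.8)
The plan is to deduce the theorem from Corollary \ref{notins}, after first reducing to the situation described in the ``furthermore'' clause via a congruence argument. Let $\tilde\Gamma\subset k^{*}$ be the finitely generated group generated by all the $\alpha_{i}$ and $\beta_{j}$, and let $N$ be the exponent of its torsion subgroup, so that $\langle\gamma^{N}:\gamma\in\tilde\Gamma\rangle$ is torsion-free. I would split $\mathbb N$ into the classes $l\equiv c\pmod N$, $0\le c<N$; the restrictions $F_{c}(q):=F(Nq+c)$ and $G_{c}(q):=G(Nq+c)$ are linear recurrences whose roots $\alpha_{i}^{N},\beta_{j}^{N}$ lie in a torsion-free group $\Gamma$, hence (after merging equal roots) are nondegenerate. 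I would dispose of the degenerate sub-cases first: if $F_{c}$ or $G_{c}$ is identically zero the other sequence is a nontrivial common factor and this class is one of the asserted progressions; if one of them is a nonzero constant the $\gcd$ is bounded (via the product formula applied to that constant) so only finitely many $q$ in the class solve the inequality; and if $F_{c},G_{c}$ are nonconstant but not coprime in $\mathcal H_{\Gamma}(k)$, then again $Nt+c$ is one of the asserted progressions. Thus it remains to treat classes in which $F_{c},G_{c}$ are coprime and nonconstant with roots generating a torsion-free group, which, after renaming, is exactly the ``furthermore'' clause; proving that clause therefore finishes the proof.

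So assume $F,G$ are coprime, nonconstant, with roots generating a torsion-free group $\Gamma$. (If $\Gamma=\{1\}$ then $F,G$ are coprime polynomials, $S_{0}=\emptyset$, and $\log\gcd(F(l),G(l))$ is $O(1)$ by the resultant, so we are done; assume $\Gamma$ has rank $r\ge1$ and fix a basis $u_{1},\dots,u_{r}$.) Enlarge the place set to a finite $S\supseteq M_{k,\infty}$ containing every place at which some $u_{j}$ fails to be a unit; then $S_{0}\subseteq S$ and $u_{1},\dots,u_{r}\in\mathcal O_{k,S}^{*}$. Writing $F(l)=f(l,u_{1}^{l},\dots,u_{r}^{l})$ and $G(l)=g(l,u_{1}^{l},\dots,u_{r}^{l})$ for Laurent polynomials $f,g$ in $x_{0},x_{1},\dots,x_{r}$ and clearing denominators by (maximal) monomials in $x_{1},\dots,x_{r}$, I obtain coprime polynomials $\tilde f,\tilde g\in k[x_{0},\dots,x_{r}]$ — coprimality of $F,G$ gives coprimality of $f,g$ in the Laurent UFD. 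Put $\mathbf u(l)=(l,u_{1}^{l},\dots,u_{r}^{l})\in\mathbb G_{m}^{r+1}(k)$. The point of the almost $S$-unit machinery enters here: the coefficient variable $x_{0}=l$ has small height, $h(l)=\log l$, while $h(\mathbf u(l))\ge l\,h(u_{j})\gg l$ for any $j$; since the remaining coordinates are $S$-units, $h_{\bar S}(\mathbf u(l))\le\log l$, so the ratio $h_{\bar S}(\mathbf u(l))/h(\mathbf u(l))$ tends to $0$, and hence $\mathbf u(l)\in\mathbb G_{m}^{r+1}(k)_{S,\delta}$ for every fixed $\delta>0$ once $l$ is large.

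Now fix $\epsilon>0$. I would apply Corollary \ref{notins} to $\tilde f,\tilde g$ with a small parameter $\epsilon_{1}$, obtaining $\delta_{1}>0$ and a proper Zariski closed $Z\subset\mathbb G_{m}^{r+1}$ outside of which $-\sum_{v\notin S}\log^{-}\max\{|\tilde f(\mathbf u)|_{v},|\tilde g(\mathbf u)|_{v}\}<\epsilon_{1}\max_{i}h(u_{i})$. For $v\notin S$ the clearing monomials are $v$-adic units, so for $\mathbf u=\mathbf u(l)$ this sum equals $-\sum_{v\notin S}\log^{-}\max\{|F(l)|_{v},|G(l)|_{v}\}$, and the right-hand side is $\epsilon_{1}l\max_{j}h(u_{j})$ once $l$ is large. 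Next I would check that $\mathbf u(l)\notin Z$ for all but finitely many $l$: $Z$ lies in the zero set of some nonzero Laurent polynomial $P$, and $P(\mathbf u(l))$, cleared of denominators, is a generalized power sum in $l$ whose roots are the distinct monomials in $u_{1},\dots,u_{r}$ occurring in $P$ — distinct because the $u_{j}$ are multiplicatively independent, and with no ratio a root of unity because $\Gamma$ is torsion-free — hence a nonzero nondegenerate linear recurrence, which has only finitely many zeros by Skolem--Mahler--Lech. Finally, for each of the finitely many $v\in S\setminus S_{0}$ we have $|\alpha_{i}|_{v}\ge1$ for some $i$ or $|\beta_{j}|_{v}\ge1$ for some $j$, and applying Lemma \ref{rec1} to whichever of $F,G$ has such a root (both are nondegenerate and nonzero) bounds $-\log^{-}\max\{|F(l)|_{v},|G(l)|_{v}\}$ by $\epsilon_{2}l$ for $l$ large. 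Splitting $M_{k}\setminus S_{0}=(M_{k}\setminus S)\sqcup(S\setminus S_{0})$ and adding the two bounds, the left-hand side of the inequality in the theorem is, for large $l$, at most $(\epsilon_{1}\max_{j}h(u_{j})+|S\setminus S_{0}|\,\epsilon_{2})\,l$; taking $\epsilon_{1},\epsilon_{2}$ small enough makes this $<\epsilon l$, so only finitely many $l$ solve the inequality.

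The main obstacle is conceptual rather than computational: it is realizing that the polynomial coefficients $p_{i},q_{j}$ — which take us outside Levin's pure $S$-unit setting — can be absorbed into an extra coordinate $x_{0}=l$ which, although not an $S$-unit, is an almost $S$-unit because its height $\log l$ is negligible compared to the total height, so that Corollary \ref{notins} still applies; and then controlling the Subspace-Theorem exceptional set $Z$ by a nondegeneracy/Skolem--Mahler--Lech argument, which is precisely what forces the passage to arithmetic progressions in the general statement and accounts for the common-factor dichotomy there.
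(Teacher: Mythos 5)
Your proof is correct and follows essentially the same route as the paper's: reduce by congruence classes to the case where the roots generate a torsion-free group and $F,G$ are coprime, absorb the polynomial coefficients into the extra coordinate $x_{0}=l$ (an almost $S$-unit since $h(l)=\log l$ is negligible), apply Corollary \ref{notins} to the point $(l,u_{1}^{l},\ldots,u_{r}^{l})$, dispose of the Subspace-Theorem exceptional set via nondegeneracy and Skolem--Mahler--Lech, and handle the places in $S\setminus S_{0}$ with Lemma \ref{rec1}. If anything, you are more explicit than the paper about the degenerate sub-cases (a restriction $F_{c}$ or $G_{c}$ becoming zero or constant) and the rank-zero case.
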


\begin{proof}
We begin with a couple of convenient reductions. First, by considering finitely many arithmetic progressions in $l$, we may reduce to the case where the combined roots of $F$ and $G$ generate a torsion-free group $\Gamma$ of rank $r$ (in particular, both $F$ and $G$ are nondegenerate). Let $S \supset S_{0}$ be a finite set of places of $k$, containing the archimedean places, such that all coefficients of $p_{i}$ and $q_{j}$ and $\alpha_{i}$, $\beta_{j}$ are in $\mathcal{O}_{k,S}^{*}$ for all $i$ and $j$. \\
By Lemma \ref{rec1}, 
$$\displaystyle \sum_{v \in S \setminus S_{0}}-\log^{-}\max\{|F(l)|_{v},|G(l)|_{v}\} \leq \dfrac{\epsilon}{2} l$$
for all but finitely many $l \in \mathbb{N}$. Thus it suffices to prove the statement of the theorem with the inequality:
$$\displaystyle \sum_{v \in M_{k} \setminus S}-\log^{-}\max\{|F(l)|_{v},|G(l)|_{v}\} < \epsilon l.$$

Let $u_{1},\ldots,u_{r}$ be generators for $\Gamma$. Let $f,g \in k[l,x_{1},\ldots,x_{r},x_{1}^{-1},\ldots,x_{r}^{-1}]$ be the Laurent polynomials corresponding to $F$ and $G$. We may write
$$f(l,x_{1},\ldots,x_{r})=x_{1}^{i_{1}}\cdots x_{r}^{i_{r}}f_{0}(l,x_{1},\ldots,x_{r}),$$
$$g(l,x_{1},\ldots,x_{r})=x_{1}^{j_{1}}\cdots x_{r}^{j_{r}}g_{0}(l,x_{1},\ldots,x_{r})$$
where $i_{1},\ldots,i_{r},j_{1},\ldots,j_{r} \in \mathbb{Z}$ and $f_{0} \in k[l,x_{1},\ldots,x_{r}]$, $g_{0} \in k[l,x_{1},\ldots,x_{r}]$ with $x_{i} \nmid f_{0}g_{0}$, $i=1,\ldots,r$. Let $F_{0}$ and $G_{0}$ be the linear recurrence sequences corresponding to $f_0$ and $g_0$, respectively. Since $u_{1},\ldots,u_{r} \in \mathcal{O}_{k,S}^{*}$, it follows that\\
$$\displaystyle \sum_{v \in M_{k} \setminus S}-\log^{-}\max\{|F(l)|_{v},|G(l)|_{v}\} = \sum_{v \in M_{k} \setminus S}-\log^{-}\max\{|F_{0}(l)|_{v},|G_{0}(l)|_{v}\}.$$
Then it suffices to prove the statement of the theorem with $F$ and $G$ replaced by $F_{0}$ and $G_{0}$, respectively. Note that since $x_{1},\ldots, x_{r}$ are units in $k[x_{1},\ldots,x_{r},x_{1}^{-1},\ldots,x_{r}^{-1}]$, replacing $F$ and $G$ by $F_{0}$ and $G_{0}$ has no effect on coprimality statements. Thus, we now assume that $F$ and $G$ correspond to polynomials $f$ and $g$ in $k[l,x_{1},\ldots,x_{r}]$ .\\

Suppose now that $F$ and $G$ are coprime (equivalently, $f$ and $g$ are coprime). Let
$$P_{n}=(n,u_{1}^{n},\ldots,u_{r}^{n}).$$
Now for a fixed sufficiently small positive $\delta$ (coming from the proof of Corollary \ref{notins}), take $n$ to be sufficiently large such that $\displaystyle h(n) \leq \delta n\min_{i} h(u_{i})$, and so $P_{n} \in \mathbb{G}_{m}^{r+1}(k)_{S,\delta}$.\\
By Corollary \ref{notins},
$$\displaystyle \sum_{v \in M_{k} \setminus S}-\log^{-}\max\{|f(P_{n})|_{v},|g(P_{n})|_{v}\}<\epsilon \max\{h(u_{1}^{n}),\ldots,h(u_{r}^{n})\}$$
for all $P_{n} \in \mathbb{G}_{m}^{r+1}(k)_{S,\delta}$ outside a proper Zariski closed set $Z$. Noting that $f(P_{n})=F(n)$ and $g(P_{n})=G(n)$, and also that 
$$\max\{h(u_{1}^{n}),\ldots,h(u_{r}^{n})\}=n \max\{h(u_{1}),\ldots,h(u_{r})\},$$ 
after possibly shrinking $\epsilon$,we can write the above inequality as
$$\displaystyle \sum_{v \in M_{k} \setminus S}-\log^{-}\max\{|F(n)|_{v},|G(n)|_{v}\}<\epsilon n.$$
Cover the exceptional set $Z$ by a hypersurface defined by a polynomial 
$$\displaystyle Exc(x_{1},\ldots,x_{r+1})$$
in $k[x_{1},\ldots,x_{r+1}]$ such that if $P_{n} \in Z$ then $Exc(P_{n})=0$. We can view $Exc(P_{n})$ as terms of a linear recurrence sequence $E(n)$ with $E$ non-degenerate. By the Skolem-Mahler-Lech theorem, there are only finitely many zeros for $E$, which completes the proof.\\
\end{proof}







Here we deal with a special case when $m$ and $n$ are algebraically related:
\begin{Lemma}\label{sinvar}
Let
$$\displaystyle F(m)=\sum_{i=1}^{s}p_{i}(m)\alpha_{i}^{m}$$
$$\displaystyle G(n)=\sum_{j=1}^{t}q_{j}(n)\beta_{j}^{n}$$
be two linear recurrence sequences over a number field $k$ and $S$ be a finite set of places in $M_{k}$ containing archimedean places and $S_{0}$, where $S_{0}$ is defined as
$$S_{0}=\{v \in M_{k}:\max\{|\alpha_{1}|_{v},\ldots,|\alpha_{s}|_{v},|\beta_{1}|_{v},\ldots,|\beta_{t}|_{v}\} < 1\}.$$
Let $C \subset \mathbb{A}^{2}$ be an affine irreducible plane curve over $k$.  If there are infinitely many $(m,n) \in C(\mathbb{Z})$ satisfying the inequality
$$\displaystyle \sum_{v \in M_{k} \setminus S}-\log^{-}\max\{|F(m)|_{v},|G(n)|_{v}\} > \epsilon \max\{m,n\}$$
then $C$ is a line over $k$. In particular, if $m(t),n(t) \in \mathbb{Z}[t]$ are polynomials that are not linearly related, then the inequality 
$$\displaystyle \sum_{v \in M_{k} \setminus S}-\log^{-}\max\{|F(m(t))|_{v},|G(n(t))|_{v}\} > \epsilon \max\{m(t),n(t)\}$$
has only finitely many solutions $t \in \mathbb{Z}$.
\end{Lemma}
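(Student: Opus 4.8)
The plan is to reduce, via a parametrization of $C$, to the multivariable GCD inequality of Corollary \ref{notins}, by ``linearizing'' the exponents: if $(m,n)=(m(t),n(t))$ with $m,n\in k[t]$, then introducing the $t^k$-th powers of the generators of the group of roots as new $\mathbb{G}_m$-coordinates turns $F(m(t))$ and $G(n(t))$ into Laurent polynomials evaluated at an almost-$S$-unit point, and the hypothesis that $m,n$ are not linearly related is exactly what makes those two Laurent polynomials coprime. For the curve statement itself, if $C(\mathbb Z)$ is finite there is nothing to prove; otherwise, by Siegel's theorem on integral points $C$ has geometric genus $0$ with at most two points at infinity, and either $C$ is a line (and we are done) or $C$ admits a parametrization $t\mapsto(m(t),n(t))$ --- with $m,n$ polynomials when there is one point at infinity, Laurent polynomials when there are two --- such that, off a finite set, the integral points of $C$ are obtained with $t$ in the $S'$-integers (resp. $S'$-units) of some number field, and ``$C$ not a line'' is the same as ``$m,n$ not linearly related''. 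The polynomial case is precisely the ``in particular'' statement; the two-point case is the same argument with $t$ ranging over $S'$-units, which I would indicate but not carry out in full.

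For the ``in particular'' statement I would first perform the usual preliminary reductions, following the proof of Theorem \ref{recbad1}: splitting $t$ into finitely many arithmetic progressions (which replaces $m,n$ by other non-linearly-related polynomials), we may assume the combined roots of $F,G$ generate a torsion-free group $\Gamma=\langle u_1,\dots,u_r\rangle$, so $F,G$ are nondegenerate; we enlarge $S$ to contain the archimedean places, $S_0$, and the support of the $u_l$, using Lemma \ref{rec1} to absorb the finitely many new places; and we write $F(m)=d(m)F_0(m)$, $G(n)=e(n)G_0(n)$ with $d,e$ the gcd's of the respective coefficient polynomials, whose contribution is negligible since $\sum_{v\notin S}-\log^-|d(m(t))|_v\le h(d(m(t)))=O(\log t)$ and similarly for $e$. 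It then suffices to bound $\sum_{v\notin S}-\log^-\max\{|F_0(m(t))|_v,|G_0(n(t))|_v\}$, where now $F_0,G_0$ have coprime coefficient polynomials, by $\epsilon\max\{m(t),n(t)\}$ for all large $t$. Put $D=\max\{\deg m,\deg n\}$, which is $\ge 2$ because $m,n$ are not linearly related; set $w_{l,k}=u_l^{t^k}$ for $1\le l\le r$, $1\le k\le D$, and $P_t=(t,(w_{l,k}))\in\mathbb G_m^{1+rD}(k)$. Only the coordinate $t$ fails to be an $S$-unit, and $h(t)=O(\log t)$ while $h(P_t)\asymp t^D\max_l h(u_l)$, so $P_t\in\mathbb G_m^{1+rD}(k)_{S,\delta}$ for $t$ large, for any prescribed $\delta>0$. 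Writing $\alpha_i=\prod_l u_l^{c_{il}}$ and $m(t)=\sum_{k\ge0}a_k t^k$ gives $\alpha_i^{m(t)}=\alpha_i^{a_0}\prod_{l,\,k\ge1}w_{l,k}^{a_k c_{il}}$, hence $F_0(m(t))=\tilde f(P_t)$ and $G_0(n(t))=\tilde g(P_t)$ for explicit Laurent polynomials $\tilde f,\tilde g\in k[z_0,\,w_{l,k}^{\pm1}]$.

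The crux is that $\tilde f$ and $\tilde g$ are coprime. Monomial factors in the $w_{l,k}$ are units and may be dropped; the $w$-exponent vectors occurring in $\tilde f$ lie in the span of the rank-$r$ sublattice $\bigoplus_l\mathbb Z\cdot(a_1,\dots,a_D)$ (the $l$-th summand sitting in the $l$-th coordinate block), and those occurring in $\tilde g$ in $\bigoplus_l\mathbb Z\cdot(b_1,\dots,b_D)$, so any common factor has its $w$-exponents in the intersection of these spans, which is $\{0\}$ precisely because $(a_1,\dots,a_D)$ and $(b_1,\dots,b_D)$ are not proportional --- proportionality being equivalent to $m$ being an affine-linear function of $n$, i.e.\ to $m,n$ being linearly related. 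Hence a common factor lies, up to a unit, in $k[z_0]$; but the $k[z_0]$-content of $\tilde f$ generates the same ideal as the $p_{0,i}(m(z_0))$, which is $(1)$ by B\'ezout since $\gcd_i p_{0,i}=1$, and likewise for $\tilde g$, so $\tilde f,\tilde g$ are coprime. Applying Corollary \ref{notins} to $\tilde f,\tilde g$ (cleared of their monomial factors to honest coprime polynomials, as in Theorem \ref{recbad1}) then yields a proper Zariski closed $Z\subset\mathbb G_m^{1+rD}$ with $\sum_{v\notin S}-\log^-\max\{|\tilde f(P_t)|_v,|\tilde g(P_t)|_v\}\ll\epsilon'\,t^D\max_l h(u_l)\ll\epsilon'\max\{m(t),n(t)\}$ whenever $P_t\notin Z$, and a sufficiently small choice of $\epsilon'$ finishes the case --- provided $P_t\notin Z$ for all but finitely many $t\in\mathbb Z$. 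For this one covers $Z$ by a hypersurface $\{\mathrm{Exc}=0\}$ and observes that $\mathrm{Exc}(P_t)=\sum_\gamma p_\gamma(t)\prod_l u_l^{Q_{l,\gamma}(t)}$ is an exponential polynomial in $t$ whose characters $t\mapsto\prod_l u_l^{Q_{l,\gamma}(t)}$ are pairwise distinct (because $\Gamma$ is torsion-free); such an expression, not being identically zero, has only finitely many integer zeros, by the familiar dominant-term / Skolem--Mahler--Lech-type argument (at a place where the characters' growth-rate polynomials differ) or by induction on the number of terms as for the $S$-unit equation. Thus the displayed inequality fails for all large $t$, which is what we wanted.

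The step I expect to be hardest is the coprimality of $\tilde f$ and $\tilde g$: the lattice bookkeeping in the $w$-variables and, more delicately, isolating and killing the possible common factor in $k[z_0]$ --- this is exactly why one must first reduce to $F_0,G_0$ with coprime coefficient polynomials. Two further technical points are verifying that the Subspace Theorem exceptional set meets the orbit $\{P_t\}$ only finitely often (the Skolem--Mahler--Lech-type input above), and, for the full curve statement, extending the whole argument to a parameter $t$ ranging over $S'$-units rather than integers in the two-points-at-infinity case.
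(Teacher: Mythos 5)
Your encoding is genuinely different from the paper's, and the comparison is instructive. The paper takes $P_t=(t,u_1^{m(t)},\ldots,u_r^{m(t)},u_1^{n(t)},\ldots,u_r^{n(t)})\in\mathbb{G}_m^{2r+1}$, so that $f$ and $g$ involve disjoint sets of $\mathbb{G}_m$-variables and coprimality is immediate once the common content $c(x_1)$ in the $t$-variable is factored out; the hypothesis that $m,n$ are not linearly related is then used only in the exceptional-set analysis, to guarantee that the exponents $m(t)s'_{i,w}+n(t)t'_{i,w}$ are nonconstant. Your linearization $w_{l,k}=u_l^{t^k}$ instead moves the work into the coprimality of $\tilde f,\tilde g$, and your lattice argument there is essentially correct (with the caveat that it is exponent \emph{differences} of a factor that must lie in the saturation of the corresponding sublattice --- via semi-invariance of irreducible factors under the subtorus annihilating that lattice --- and that the possible $k[z_0]$-content is killed by B\'ezout only after your reduction to coprime coefficient polynomials, which is why that reduction is indeed necessary). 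Both encodings must then rule out $P_t\in Z$ for infinitely many $t$. Here your ``dominant-term'' suggestion is shaky, but your ``induction as for the $S$-unit equation'' is exactly what the paper does: after dividing by one term, each summand $\tfrac{p_\gamma(t)}{p_{\gamma_0}(t)}\prod_l u_l^{(Q_{l,\gamma}-Q_{l,\gamma_0})(t)}$ is an almost $(S,\delta)$-unit because the exponent differences have zero constant term (so are nonconstant when nonzero) and Lemma \ref{multindlemma} makes the unit part's height grow at least linearly in $|t|$ against the $O(\log|t|)$ of the rational-function part; Corollary \ref{unieqc} and Northcott then finish. This should not be waved at as ``familiar'': the almost-$S$-unit equation is precisely the tool the paper built for this step, and the terms are not $S$-units because of the polynomial factors.

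The genuine gap is the curve statement when $C$ has two points at infinity. There the parametrization consists of honest Laurent polynomials $m(t),n(t)\in k[t,1/t]$, and your linearization breaks down: if $m(t)=\sum_{k=-K}^{D}a_kt^k$ with $K>0$, then $m(t)\in\mathbb{Z}$ does not make the individual powers $t^{-k}$ integers, so the would-be coordinates $u_l^{t^{-k}}$ are undefined and $F(m(t))$ is not a Laurent polynomial in any collection $u_l^{t^{\pm k}}$. The paper's encoding avoids this entirely because $u_i^{m(t)}$ is taken as a single coordinate, and the height estimates $h(t)\ll\log\max\{|m(t)|,|n(t)|\}$ and $h(P_t)\gg\max\{|m(t)|,|n(t)|\}$ hold for Laurent parametrizations just as well. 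As written, your argument proves the ``in particular'' statement but not the full lemma; to get the curve case you should switch to the paper's coordinates (or supply a substitute for the linearization in the Laurent case).
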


\begin{Rem}
Note that if $C$ is a line, the solutions are easily classified using Theorem \ref{recbad1}.
\end{Rem}

The following lemma is a basic fact from linear algebra, we state it without a proof.

\begin{Lemma}\label{rec2}
Let $\{v_{1},\ldots,v_{n}\}$ be a linearly independent subset of a normed vector space $X$. Then there exists a constant $c>0$ such that for every set of scalars $\{\alpha_{1},\ldots,\alpha_{n}\}$:
$$\|\alpha_{1}v_{1}+\cdots+\alpha_{n}v_{n}\| \geq c (|\alpha_{1}|+\cdots+|\alpha_{n}|).$$
\end{Lemma}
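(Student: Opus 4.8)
The plan is to reduce this to the classical fact that the map sending a coefficient tuple to the norm of the corresponding linear combination is itself a norm on the finite-dimensional space of coefficients, and that any two norms on a finite-dimensional space are comparable. Concretely, I would work with the function $\phi(\textbf{a}) = \|\alpha_{1}v_{1}+\cdots+\alpha_{n}v_{n}\|$ defined for $\textbf{a}=(\alpha_{1},\ldots,\alpha_{n})$ ranging over the scalar field (which in every application in this paper is $\mathbb{R}$ or $\mathbb{C}$), and compare $\phi$ with the $\ell^{1}$-norm $\|\textbf{a}\|_{1} := |\alpha_{1}|+\cdots+|\alpha_{n}|$ on the coefficients.

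First I would observe that $\phi$ is Lipschitz, hence continuous, with respect to $\|\cdot\|_{1}$: by the triangle inequality, $|\phi(\textbf{a})-\phi(\textbf{b})| \le \sum_{i}|\alpha_{i}-\beta_{i}|\,\|v_{i}\| \le (\max_{i}\|v_{i}\|)\,\|\textbf{a}-\textbf{b}\|_{1}$. Next I would pass to the unit sphere $\Sigma = \{\textbf{a} : \|\textbf{a}\|_{1}=1\}$, which is compact by Heine--Borel in the finite-dimensional scalar space, so that $\phi$ attains a minimum $c$ on $\Sigma$. This minimum is strictly positive: if $\phi(\textbf{a})=0$ for some $\textbf{a}\in\Sigma$, then $\alpha_{1}v_{1}+\cdots+\alpha_{n}v_{n}=0$, and linear independence of $\{v_{1},\ldots,v_{n}\}$ forces $\textbf{a}=0\notin\Sigma$, a contradiction. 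Finally I would invoke homogeneity: for arbitrary nonzero $\textbf{a}$, set $t=\|\textbf{a}\|_{1}>0$; then $\textbf{a}/t\in\Sigma$, so $\phi(\textbf{a}) = t\,\phi(\textbf{a}/t) \ge ct = c(|\alpha_{1}|+\cdots+|\alpha_{n}|)$, and the case $\textbf{a}=0$ is trivial. This yields the claimed inequality with the constant $c$ depending only on $v_{1},\ldots,v_{n}$.

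I do not expect any genuine obstacle here; this is exactly the standard proof that norms on a finite-dimensional vector space are equivalent. The only point worth flagging is the appeal to compactness of $\Sigma$, which is what requires the scalar field to be locally compact (e.g. $\mathbb{R}$, $\mathbb{C}$, or a completion $k_{v}$) — a condition satisfied in every setting in which the lemma is applied in this paper, so no additional hypotheses are needed.
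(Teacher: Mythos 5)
The paper states Lemma~\ref{rec2} without proof (``a basic fact from linear algebra, we state it without a proof''), so there is no argument to compare against; your proof is the standard equivalence-of-norms argument via compactness of the $\ell^{1}$-unit sphere and is correct as a proof of the standard statement, i.e.\ for a normed vector space over $\mathbb{R}$ or $\mathbb{C}$.

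Your closing remark, however, is not accurate, and it touches the one genuinely delicate point. The sole application of Lemma~\ref{rec2} in the paper is the deduction of Lemma~\ref{multindlemma}, where $X$ is the Allcock--Vaaler space $\overline{\mathbb{Q}}^{*}/{\rm Tor}(\overline{\mathbb{Q}}^{*})$ equipped with the height --- a normed vector space over $\mathbb{Q}$, and $\mathbb{Q}$ is not locally compact, so your compactness step does not apply there. Worse, the lemma as literally stated is false for normed $\mathbb{Q}$-vector spaces: take $X=\mathbb{R}$ viewed over $\mathbb{Q}$ with the usual absolute value, $v_{1}=1$, $v_{2}=\sqrt{2}$; Dirichlet's approximation theorem gives integers $p,q$ with $|p-q\sqrt{2}|<1/q$ while $|p|+|q|\to\infty$, so no constant $c>0$ exists. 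The conclusion of Lemma~\ref{multindlemma} is nevertheless true, but reaching it requires an intermediate step that both you and the paper leave implicit: choose a number field $k$ and a finite set of places $S$ with $u_{1},\ldots,u_{n}\in\mathcal{O}_{k,S}^{*}$; multiplicative independence makes the $u_{i}$ generate a rank-$n$ sublattice of the free abelian group $\mathcal{O}_{k,S}^{*}/{\rm tors}$, hence they become $\mathbb{R}$-linearly independent vectors in $\mathcal{O}_{k,S}^{*}\otimes\mathbb{R}\cong\mathbb{R}^{|S|-1}$, on which the height is comparable to a genuine norm; only then does your compactness argument, now over $\mathbb{R}$, yield the desired lower bound (first for integer, hence by homogeneity for rational, exponents). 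So the proof you wrote is the right one, but the hypothesis must be read as ``normed vector space over a locally compact field,'' and the bridge to the paper's actual $\mathbb{Q}$-linear setting needs to be supplied rather than asserted.
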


Let ${\rm Tor}(\overline{\mathbb{Q}}^*)$ denote the torsion subgroup of $\overline{\mathbb{Q}}^*$.  Since the height $h$ gives $\overline{\mathbb{Q}}^*/{\rm Tor(\overline{\mathbb{Q}}^*)}$ the structure of a normed vector space over $\mathbb{Q}$ as in Allcock and Vaaler \cite{AV}, we immediately find:

\begin{Lemma}
\label{multindlemma}
Let $u_{1},\ldots,u_{n}$ be multiplicatively independent elements of $\overline{\mathbb{Q}}^*$. Then there exists a constant $c>0$ such that for all $i_1,\ldots, i_n\in\mathbb{Z}$,
\begin{align*}
h(u_{1}^{i_{1}}\cdots u_{n}^{i_{n}}) \geq c \max_{j}{|i_{j}|}.
\end{align*}
\end{Lemma}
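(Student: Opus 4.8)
The plan is to obtain this as an immediate consequence of Lemma~\ref{rec2} once the multiplicative group $\overline{\mathbb{Q}}^*$ modulo torsion is reinterpreted as a normed vector space. First I would recall the result of Allcock and Vaaler \cite{AV}: writing $V := \overline{\mathbb{Q}}^*/{\rm Tor}(\overline{\mathbb{Q}}^*)$ additively, $V$ is a $\mathbb{Q}$-vector space (divisibility comes from extracting roots, torsion-freeness is Kronecker's theorem), and $[\alpha]\mapsto h(\alpha)$ is a well-defined norm $\|\cdot\|$ on $V$ — it is well-defined because $h(\zeta\alpha)=h(\alpha)$ for roots of unity $\zeta$; positivity is again Kronecker's theorem; $\mathbb{Q}$-homogeneity follows from $h(\alpha^m)=|m|h(\alpha)$ for $m\in\mathbb{Z}$ together with extracting roots; and the triangle inequality is $h(\alpha\beta)\le h(\alpha)+h(\beta)$.

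Next I would note that the hypothesis that $u_1,\ldots,u_n$ are multiplicatively independent is precisely the statement that $[u_1],\ldots,[u_n]$ are linearly independent over $\mathbb{Q}$ in $V$: a relation $\sum_j i_j[u_j]=0$ with integers $i_j$ not all zero would say $u_1^{i_1}\cdots u_n^{i_n}$ is a root of unity, contradicting multiplicative independence, and clearing denominators reduces the general $\mathbb{Q}$-linear relation to an integer one.

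Finally, I would apply Lemma~\ref{rec2} to the linearly independent set $\{[u_1],\ldots,[u_n]\}$ in the normed space $(V,\|\cdot\|)$, obtaining a constant $c>0$ with $\|\sum_j\alpha_j[u_j]\|\ge c\sum_j|\alpha_j|$ for all scalars $\alpha_j\in\mathbb{Q}$. Taking $\alpha_j=i_j\in\mathbb{Z}$ and using that $[\,\cdot\,]$ is a group homomorphism gives
$$h\bigl(u_1^{i_1}\cdots u_n^{i_n}\bigr)=\Bigl\|\sum_{j=1}^{n}i_j[u_j]\Bigr\|\ge c\sum_{j=1}^{n}|i_j|\ge c\max_{1\le j\le n}|i_j|,$$
which is the claim. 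There is essentially no obstacle here — the argument is bookkeeping — and the only point worth a moment's care is verifying that $h$ genuinely descends to a norm on $V$, which is exactly the content of \cite{AV}.
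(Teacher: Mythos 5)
Your approach is exactly the paper's: Lemma \ref{multindlemma} is derived in one line from Lemma \ref{rec2} together with the Allcock--Vaaler normed-space structure on $\overline{\mathbb{Q}}^*/{\rm Tor}(\overline{\mathbb{Q}}^*)$ from \cite{AV}, and your translation of multiplicative independence into $\mathbb{Q}$-linear independence of the classes $[u_j]$ is correct. However, the step you dismiss as ``no obstacle --- the argument is bookkeeping'' is where the only real issue sits. Lemma \ref{rec2}, as a ``basic fact from linear algebra,'' is a statement about normed vector spaces over a \emph{complete} scalar field: its standard proof uses compactness of the simplex $\{(\alpha_1,\dots,\alpha_n):\sum_j|\alpha_j|=1\}$, and the statement is false for normed $\mathbb{Q}$-vector spaces in general. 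For instance, in $V=\mathbb{Q}+\mathbb{Q}\sqrt{2}\subset\mathbb{R}$ with norm $\|x\|=|x|$, the vectors $1$ and $\sqrt{2}$ are $\mathbb{Q}$-linearly independent, yet $\|p-q\sqrt{2}\|\ll 1/q$ for continued-fraction convergents $p/q$, so no constant $c>0$ as in Lemma \ref{rec2} exists. You therefore cannot invoke Lemma \ref{rec2} for $(V,h)$ over $\mathbb{Q}$ solely on the grounds that the $[u_j]$ are $\mathbb{Q}$-linearly independent.

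What rescues the argument is a property of this particular normed space that is not formal. The group $\Gamma$ generated by $u_1,\dots,u_n$ lies in the fixed number field $k=\mathbb{Q}(u_1,\dots,u_n)$, so by Northcott's theorem the set $\{\gamma\in\Gamma:h(\gamma)\le B\}$ is finite for every $B$; equivalently, the image of $\Gamma$ under $\gamma\mapsto(\log|\gamma|_v)_v$ is a discrete rank-$n$ subgroup of a finite-dimensional real vector space. A discrete subgroup of rank $n$ is a lattice in an $n$-dimensional real subspace, so the $[u_j]$ remain linearly independent over $\mathbb{R}$ in the completion, and Lemma \ref{rec2} then applies legitimately over $\mathbb{R}$ to produce the constant $c$. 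The paper elides this point as well, but since your write-up explicitly asserts that the application of Lemma \ref{rec2} is pure bookkeeping, it is worth recording that the discreteness input (Northcott, or Dirichlet's unit theorem in the $S$-unit case) is precisely where the arithmetic enters; without it the conclusion can fail, as the $\mathbb{Q}+\mathbb{Q}\sqrt{2}$ example shows.
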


We now prove Lemma \ref{sinvar}.
\begin{proof}
Using the same reduction as in the proof of Theorem \ref{recbad1}, we can assume that the roots of $F$ and $G$ are $S$-units, and by considering finitely many congruence classes, we can assume that the roots of $F$ and $G$ generate a torsion free group.  Let $C$ be the affine curve defined by the algebraic relation $R(x_{1},x_{2})=0$, with $R(x_1,x_2)\in k[x_1,x_2]$ irreducible. If $C$ is not geometrically irreducible then $C(k)$ (and hence $C(\mathbb{Z})$) is finite, and so we further assume $C$ is geometrically irreducible. By Siegel's Theorem, $C(\mathbb{Z})$ is finite unless $C$ has genus 0 and $C$ has two or fewer distinct points at infinity, which we now assume. After replacing $k$ by a suitable finite extension, we can parametrize $C$ by Laurent polynomials $m(t), n(t)\in k[t,1/t]$.  Assume that $C$ is not a line, or equivalently, that $m(t)$ and $n(t)$ do not satisfy a linear relation.  

Let $\Gamma$ be the torsion free group generated by the roots of $F$ and $G$ and let $\{u_{1},\ldots, u_{r}\}$ be generators of $\Gamma$. Consider the points
$$P_{t}=(t,u_{1}^{m(t)},\ldots,u_{r}^{m(t)},u_{1}^{n(t)},\ldots,u_{r}^{n(t)}),$$
for $t\in k$ where, as we implicitly assume from now on, we have $m(t),n(t)\in \mathbb{Z}$. Then for some Laurent polynomials 
$$f(x_1,\ldots, x_{r+1}), g(x_1,x_{r+2},\ldots, x_{2r+1})\in k[x_1,\ldots, x_{2r+1}, x_1^{-1},\ldots, x_{2r+1}^{-1}],$$ 
we have $F(m(t))=f(P_t)$ and $G(n(t))=g(P_t)$. From the form of $f$ and $g$, we may write
\begin{align*}
f(x_1,\ldots, x_{r+1})&=x_{2}^{i_{1}}\cdots x_{r+1}^{i_{r}}c(x_1)\bar{f}(x_1,\ldots, x_{r+1}),\\
g(x_{1},x_{r+2},\ldots,x_{2r+1})&=x_{r+2}^{j_1}\cdots x_{2r+1}^{j_{r}}c(x_1)\bar{g}(x_{1},x_{r+2},\ldots,x_{2r+1})
\end{align*}
where $i_1,\ldots, i_r,j_1,\ldots, j_r\in\mathbb{Z}$, $\bar{f}$ and $\bar{g}$ are coprime polynomials in $k[x_1,\ldots, x_{2r+1}]$, and $c(x_1)$ is a Laurent polynomial in $x_1$.

By basic properties of heights, if $m(t),n(t)\in\mathbb{Z}$, then $h(t)\ll \log\max\{|m(t)|,|n(t)|\}$ and $h(P_t)\gg  \max\{|m(t)|,|n(t)|\}$. It follows that for any $\delta>0$, we have $P_t\in \mathbb{G}_{m}^{2r+1}(k)_{S,\delta}$ for all but finitely many $t\in k$ (with $m(t),n(t)\in \mathbb{Z}$).  Then Corollary \ref{notins} applies to $\bar{f}$ and $\bar{g}$ and we obtain that for any $\epsilon >0$ there exists a proper Zariski closed subset $Z\subset \mathbb{G}_{m}^{2r+1}$ such that
$$\displaystyle \sum_{v \in M_{k}\setminus S}-\log^{-}\max\{|\bar{f}(P_{t})|_{v},|\bar{g}(P_{t})|_{v}\}<\epsilon \max_{i=1,\ldots,r}\{h(u_{i}^{m(t)}), h(u_{i}^{n(t)})\}$$
for all points $P_{t}$ outside $Z$.  By elementary estimates, for all but finitely many $t\in k$,
\begin{align*}
\sum_{v\in M_k\setminus S}-\log^-|c(t)|_v\leq h(c(t))<\epsilon \max_{i=1,\ldots,r}\{h(u_{i}^{m(t)}), h(u_{i}^{n(t)})\}.
\end{align*}

Using this inequality and that $u_1,\ldots, u_r\in \mathcal{O}_{k,S}^*$, the inequality for $\bar{f}$ and $\bar{g}$ implies the inequality for $f$ and $g$:
\begin{align*}
\displaystyle \sum_{v \in M_{k}\setminus S}-\log^{-}\max\{|f(P_{t})|_{v},|g(P_{t})|_{v}\}<\epsilon \max_{i=1,\ldots,r}\{h(u_{i}^{m(t)}), h(u_{i}^{n(t)})\}
\end{align*}
for all points $P_{t}$ outside a proper Zariski closed subset $Z\subset \mathbb{G}_{m}^{2r+1}$. Set $(m,n)=(m(t),n(t))\in C(\mathbb{Z})$, and note that $f(P_{t})=F(m)$, $g(P_{t})=G(n)$, and 
$$\max\{h(u_{1}^{m}),\ldots,h(u_{r}^{m}),h(u_{1}^{n}),\ldots,h(u_{r}^{n})\}\leq\max \{m,n\} \max\{h(u_{1}),\ldots,h(u_{r})\}.$$ 
Then we can write the above inequality as
$$\displaystyle \sum_{v \in M_{k} \setminus S}-\log^{-}\max\{|F(m)|_{v},|G(n)|_{v}\}<\epsilon \max\{m,n\}.$$
It remains to show that there are only finitely many $t\in k$ with $m(t),n(t)\in \mathbb{Z}$ and $P_{t}\in Z$. Now we cover $Z$ by a hypersurface defined by an equation $z(x_1,\ldots, x_{2r+1})=0$.   Then every $P_{t}$ in $Z$ satisfies an equation

\begin{align*}
z(P_{t})=\sum_{w=1}^{K}P_{w}(t)u_{1}^{m(t)s_{1,w}}\cdots u_{r}^{m(t)s_{r,w}}u_{1}^{n(t)t_{1,w}}\cdots u_{r}^{n(t)t_{r,w}}=0,
\end{align*}
where $P_w\in k[t], w=1,\ldots, K$ are nonzero polynomials and the integer tuples $(s_{1,w},\ldots, s_{r,w},t_{1,w},\ldots, t_{r,w})$, $w=1,\ldots, K$, are distinct.  If $K=1$ then $t$ must be one of the finitely many roots of the polynomial $P_1(t)$. Otherwise, dividing by the first term we find
\begin{align}
\label{linrecuniteq}
\sum_{w=2}^{K}Q_{w}(t)u_{1}^{m(t)s'_{1,w}+n(t)t'_{1,w}}\cdots u_{r}^{m(t)s'_{r,w}+n(t)t'_{r,w}}=1,
\end{align}
where $Q_{w}(t), i=2,\ldots, K,$ are rational functions in $t$ and $s'_{i,w}=s_{i,w}-s_{i,1}$, $t'_{i,w}=t_{i,w}-t_{i,1}$.

Note that
\begin{align*}
h(Q_{w}(t))&=(\deg Q_w)h(t)+O(1)
\end{align*}
and by Lemma \ref{multindlemma} (assuming $m(t),n(t)\in \mathbb{Z}$ as usual)
\begin{align*}
h\left(u_{1}^{m(t)s'_{1,w}+n(t)t'_{1,w}}\cdots u_{r}^{m(t)s'_{r,w}+n(t)t'_{r,w}}\right)&\gg \max_i\{|m(t)s'_{i,w}+n(t)t'_{i,w}|\}\\
&=e^{\max_i h(m(t)s'_{i,w}+n(t)t'_{i,w})}\\
&\gg e^{h(t)\max_i \deg(ms'_{i,w}+nt'_{i,w})}\\
&\gg e^{h(t)}
\end{align*}
since $(s'_{i,w},t'_{i,w})\neq (0,0)$ for some $i$, and in this case $ms'_{i,w}+nt'_{i,w}$ must be nonconstant by our assumption that $m$ and $n$ aren't linearly related.

Since the terms in the sum in \eqref{linrecuniteq} are $S$-units outside the factors $Q_w(t)$, it follows from the height estimates above and the almost $S$-unit equation (Corollary \ref{unieqc}) that there exists a finite set $\mathcal{F}\subset k$ such that every solution $t\in k$ to \eqref{linrecuniteq} (with $m(t),n(t)\in\mathbb{Z}$) satisfies
$$Q_{w}(t)u_{1}^{m(t)s'_{1,w}+n(t)t'_{1,w}}\cdots u_{r}^{m(t)s'_{r,w}+n(t)t'_{r,w}}\in \mathcal{F}$$
for some $w$. By the height estimates above, 
$$h(Q_{w}(t)u_{1}^{m(t)s'_{1,w}+n(t)t'_{1,w}}\cdots u_{r}^{m(t)s'_{r,w}+n(t)t'_{r,w}})\gg e^{h(t)},$$
and Northcott's Theorem implies that there are only finitely many solutions $t\in k$ with $m(t),n(t)\in\mathbb{Z}$ satisfying \eqref{linrecuniteq}. It follows that there are only finitely many pairs $(m,n)\in C(\mathbb{Z})$ satisfying the inequality of the theorem.

\end{proof}

\begin{Def}\label{indroot}
Let $F$ and $G$ be two linear recurrence sequences. Suppose that the roots of $F$ and $G$ generate multiplicative torsion-free groups of rank $r$ and $s$, respectively. We say that the roots of $F$ and $G$ are multiplicatively independent if the combined roots generate a group of rank $r+s$. Otherwise, we say they are multiplicatively dependent. 
\end{Def}

The following result is a generalization of Theorem \ref{recbad1} under a multiplicative independence assumption, which was proved by Grieve-Wang \cite{GW}. Here we give an alternative proof:
\begin{Th}\label{recbad2}
Let
$$\displaystyle F(m)=\sum_{i=1}^{s}p_{i}(m)\alpha_{i}^{m}$$
$$\displaystyle G(n)=\sum_{j=1}^{t}q_{j}(n)\beta_{j}^{n}$$
define two algebraic linear recurrence sequences, where $p_{i}$ and $q_{j}$ are polynomials. Let $k$ be a number field such that all coefficients of $p_{i}$ and $q_{j}$ and $\alpha_{i}$, $\beta_{j}$ are in $k$, for $i=1,\ldots,s$, $j=1,\ldots,t$. Let
$$S_{0}=\{v \in M_{k}:\max\{|\alpha_{1}|_{v},\ldots,|\alpha_{s}|_{v},|\beta_{1}|_{v},\ldots,|\beta_{t}|_{v}\} < 1\}.$$
Let $\epsilon >0$. If we assume further the roots of $F$ and $G$ are independent, then all but finitely many $(m,n) \in \mathbb{N}^{2}$ satisfy the inequality
$$\displaystyle \sum_{v \in M_{k} \setminus S_{0}}-\log^{-}\max\{|F(m)|_{v},|G(n)|_{v}\} < \epsilon \max\{m,n\}.$$

In particular, if $S_{0}=\emptyset$, then all but finitely many $(m,n)$ satisfy the inequality
$$\log \gcd(F(m),G(n))<\epsilon \max\{m,n\}$$
\end{Th}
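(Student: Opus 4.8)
The plan is to argue by contradiction: assume the inequality fails for infinitely many $(m,n)\in\mathbb{N}^2$, and derive a contradiction from the multiplicative independence hypothesis via Corollary \ref{notins}, Lemma \ref{sinvar} and Theorem \ref{recbad1}. As in the proof of Theorem \ref{recbad1}, I would first split $(m,n)$ into finitely many congruence classes so as to assume that the combined roots of $F$ and $G$ generate a torsion-free group $\Gamma$; by the independence hypothesis $\Gamma=\Gamma_F\times\Gamma_G$, where $\Gamma_F,\Gamma_G$ are generated by the roots of $F$, resp. $G$, so that generators $u_1,\dots,u_r$ of $\Gamma_F$ and $w_1,\dots,w_s$ of $\Gamma_G$ together form a multiplicatively independent set. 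Enlarging $S_0$ to a finite set $S$ (archimedean places included) over which all $\alpha_i,\beta_j$ are units and all coefficient polynomials are integral, Lemma \ref{rec1}---applied to $F$ at the places $v$ where $\max_i|\alpha_i|_v\ge 1$ and to $G$ at the remaining places of $S\setminus S_0$---shows that the places of $S\setminus S_0$ contribute $o(\max\{m,n\})$, so it suffices to bound $\sum_{v\in M_k\setminus S}-\log^-\max\{|F(m)|_v,|G(n)|_v\}$.

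Writing $F$ and $G$ as Laurent polynomials in the index and in the group generators and peeling off monomial factors, let $f_0\in k[x_0,x_1,\dots,x_r]$ (index variable $x_0$) and $g_0\in k[x_0',y_1,\dots,y_s]$ (index variable $x_0'$) be the resulting polynomial representatives. If $f_0$ or $g_0$ does not involve the unit variables (in particular if it is constant), or if $m=0$ or $n=0$, then $|F(m)|_v$ (resp. $|G(n)|_v$) for $v\notin S$ is, up to an $S$-unit, the value of a polynomial, so the sum is $\le h(F(m))\ll\log m<\epsilon\max\{m,n\}$ for all but finitely many $(m,n)$. So I may assume $r,s\ge 1$ and that $f_0,g_0$ genuinely involve the unit variables; since they then use disjoint variable sets, they are coprime. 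Set $P_{m,n}=(m,n,u_1^m,\dots,u_r^m,w_1^n,\dots,w_s^n)$, so $F(m)=(\text{$S$-unit})f_0(P_{m,n})$ and $G(n)=(\text{$S$-unit})g_0(P_{m,n})$; the sum over $M_k\setminus S$ is then unchanged by replacing $F,G$ by $f_0,g_0$. Because the $u_i^m,w_j^n$ are $S$-units and $m,n$ are integers, $h_{\bar S}(P_{m,n})\le h(m)+h(n)\ll\log\max\{m,n\}$, while multiplicative independence gives $h(P_{m,n})\ge\max\{m\,h(u_1),n\,h(w_1)\}\gg\max\{m,n\}$; hence $P_{m,n}\in\mathbb{G}_m^{r+s+2}(k)_{S,\delta}$ for the $\delta$ supplied by Corollary \ref{notins} once $\max\{m,n\}$ is large.

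Corollary \ref{notins} applied to the coprime pair $f_0,g_0$ then yields a proper Zariski closed $Z\subset\mathbb{G}_m^{r+s+2}$ such that, after rescaling $\epsilon$ (and using that the heights of the coordinates of $P_{m,n}$ are all $\ll\max\{m,n\}$), every $(m,n)$ with $\max\{m,n\}$ large that violates the target inequality satisfies $P_{m,n}\in Z$. So the assumed-infinite set $B$ of such bad pairs, minus a finite set, maps into $Z$. I would then show $B$ cannot be Zariski dense in $\mathbb{A}^2$: expanding any $E$ from the ideal of $Z$ as an exponential polynomial $E(P_{m,n})=\sum_\ell R_\ell(m,n)\rho_\ell^m\sigma_\ell^n$ with $\rho_\ell\in\Gamma_F$, $\sigma_\ell\in\Gamma_G$ and distinct pairs $(\rho_\ell,\sigma_\ell)$ (distinctness uses torsion-freeness), the monomials $m^an^b\rho_\ell^m\sigma_\ell^n$ are linearly independent as functions on $\mathbb{Z}^2$, so a nonzero $E$ cannot vanish on a Zariski-dense subset; since $Z$ is proper, some nonzero $E$ vanishes on $\{P_{m,n}:(m,n)\in B\}$, so Zariski density of $B$ would force this $E$ to vanish identically---a contradiction. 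Hence $B$ lies in a finite union of curves and points of $\mathbb{A}^2$, and since $B$ is infinite some irreducible curve $C$ contains infinitely many pairs of $B$.

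By Lemma \ref{sinvar}, $C$ must be a line over $k$. If $C$ is axis-parallel, then $F$ or $G$ is evaluated along $C$ at a fixed integer, so the sum is $O(1)$ and $C$ carries only finitely many bad pairs---a contradiction. Otherwise, parametrizing the integer points of $C$ by a single variable $t$ (after passing to an arithmetic progression to clear denominators, and with positive leading terms so that the associated $S_0$ is unchanged) gives linear recurrences $\tilde F(t),\tilde G(t)$ in $t$ whose roots lie in $\Gamma$ and remain multiplicatively independent; consequently any common factor of $\tilde F,\tilde G$ is, up to units, a polynomial $p$ in $t$, contributing $\le h(p(t))\ll\log t$ to the sum. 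Replacing $\tilde F,\tilde G$ by the coprime pair $\tilde F/p,\tilde G/p$ and applying the ``furthermore'' clause of Theorem \ref{recbad1} (again absorbing $S\setminus S_0$ via Lemma \ref{rec1}), one finds only finitely many bad $t$ along $C$---contradicting the choice of $C$. This contradiction proves the first assertion; the ``in particular'' statement is immediate, since $S_0=\emptyset$ makes the displayed sum equal to $\log\gcd(F(m),G(n))$. The main obstacle is precisely the handling of the exceptional set $Z$ of the Subspace Theorem in this two-parameter family---ruling out Zariski density of the bad set, and, on each exceptional line, disposing of the polynomial common factor so as to reduce to the one-variable Theorem \ref{recbad1}; this is where multiplicative independence does its essential work, the remainder being bookkeeping around Corollary \ref{notins} and Lemma \ref{rec1}.
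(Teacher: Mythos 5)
Your overall setup---the reduction via Lemma \ref{rec1}, the coprime polynomials $\hat f,\hat g$ in disjoint variable sets, the almost-$S$-unit points $P_{m,n}$, and the application of Corollary \ref{notins}---matches the paper's proof. But there is a genuine gap in your treatment of the exceptional set $Z$. You claim that because the monomials $m^a n^b\rho_\ell^m\sigma_\ell^n$ are linearly independent as functions on $\mathbb{Z}^2$, a nonzero exponential polynomial $E$ cannot vanish on a Zariski-dense subset of $\mathbb{Z}^2$. Linear independence only says a nontrivial combination is not identically zero on all of $\mathbb{Z}^2$; it says nothing about Zariski-dense zero sets, and the conclusion is in fact false. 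Take $E(m,n)=u_1^m-n$ (i.e.\ the polynomial $x_2-x_{r+2}$ evaluated at $P_{m,n}$), with $u_1=2$ say: its zero set contains $\{(m,2^m):m\in\mathbb{N}\}$, which is Zariski dense in $\mathbb{A}^2$ (any polynomial vanishing on it yields a nondegenerate linear recurrence in $m$ with infinitely many zeros, hence is zero). So your argument that the bad set $B$ lies in finitely many curves, which is the hinge of your whole contradiction, does not go through, and the subsequent reduction to Lemma \ref{sinvar} collapses with it.

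The paper closes exactly this hole with different tools. First it records at the outset that any bad pair satisfies $\mathcal{K}\min\{m,n\}\geq\epsilon\max\{m,n\}$, so $m$ and $n$ are comparable. Then, for $P_{m,n}\in Z$, it divides the exponential-polynomial relation by its first term to get $\sum_{w\geq 2}Q_w(m,n)\,u_1^{ms'_{1,w}}\cdots v_s^{nt'_{s,w}}=1$, uses Lemma \ref{multindlemma} (multiplicative independence of the combined generators) to show each summand has height $\gg\max_i|ms'_{i,w}+nt'_{i,w}|$ while its non-$S$ part is only $O(\log\max\{m,n\})$, and applies the almost-$S$-unit equation (Corollary \ref{unieqc}) to force one summand into a finite set; since the heights tend to infinity with $\min\{m,n\}$ (hence with $\max\{m,n\}$ by the comparability observation), only finitely many pairs survive. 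Note that in my counterexample above the relation $n=u_1^m$ forces $\min\{m,n\}=O(\log\max\{m,n\})$, which is exactly the case the comparability observation excludes---so the independence hypothesis does its work through the unit-equation/height mechanism, not through non-density of $Z\cap\{P_{m,n}\}$ in $\mathbb{A}^2$. If you want to keep your architecture, you must replace the density step by this kind of quantitative analysis of the exceptional relation.
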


\begin{proof}
Notice that
$$\begin{aligned}
\sum_{v \in M_{k} \setminus S}-\log^{-}\max\{|F(m)|_{v},|G(n)|_{v}\}&\leq \min\{h(F(m)),h(G(n))\}\\
&\leq\mathcal{K}\min\{m,n\}
\end{aligned}$$
for some constant $\mathcal{K}$. Hence, for the inequality in the statement to be true, for a fixed $\epsilon > 0$,
$$\mathcal{K}\min\{m,n\} \geq \epsilon \max\{m,n\}.$$

The combined roots of $F$ and $G$ generate a torsion-free group $\Gamma$ of rank $r+s$ whose generators are $\{u_{1},\ldots, u_{r},v_{1},\ldots, v_{s}\}$ where $u_{1},\ldots,u_{r}$ generate the roots $\alpha_{i}$ and $v_{1},\ldots,v_{s}$ generate the roots $\beta_{j}$. By the same reduction step as in the previous proof, we assume all the coefficients of the polynomials $p_{i}$ and $q_{j}$ and all of the roots of $F$ and $G$ are $S$-units. We can also assume the Laurent polynomials $f$ and $g$ corresponding to $F$ and $G$ with respect to the roots $u_1,\ldots, u_r$ and $v_1,\ldots, v_s$, respectively, are polynomials.\\

Let $\hat{f},\hat{g} \in k[x_{1},\ldots,x_{r+s+2}]$ be polynomials such that
$$\hat{f}(x_{1},\ldots,x_{r+s+2})=f(x_{1},\ldots,x_{r+1})$$
$$\hat{g}(x_{1},\ldots,x_{r+s+2})=g(x_{r+2},\ldots,x_{r+s+2}).$$
Note that $\hat{f}$ and $\hat{g}$ are coprime since they involve disjoint sets of variables.\\

For $m,n \in \mathbb{N}$, let
$$P_{m,n}=(m,u_{1}^{m},\ldots,u_{r}^{m},n,v_{1}^{n},\ldots,v_{s}^{n}).$$
Let $\epsilon >0$ and let $\delta>0$ be the quantity from Corollary \ref{notins} for $\hat{f}, \hat{g}$, and $\epsilon$. After excluding finitely many pairs $(m,n)$, we can always assume that
$$\displaystyle h(m)+h(n)<\delta (r+s+2)\max_{i,j} \{h(u_{i}^{m}),h(v_{j}^{n})\}.$$
Therefore $P_{m,n} \in \mathbb{G}_{m}^{r+s+2}(k)_{S,\delta}$. Applying Corollary \ref{notins}, 
$$\displaystyle \sum_{v \in M_{k} \setminus S}-\log^{-}\max\{|\hat{f}(P_{m,n})|_{v},|\hat{g}(P_{m,n})|_{v}\}<\epsilon \max\{h(u_{1}^{m}),\ldots,h(u_{r}^{m}),h(v_{1}^{n}),\ldots,h(v_{s}^{n})\}$$
for all $P_{m,n} \in  \mathbb{G}_{m}^{r+s+2}(k)_{S,\delta}$ outside a proper Zariski closed set $Z \subset \mathbb{G}_{m}^{r+s+2}$. Noting that $\hat{f}(P_{m,n})=F(m)$, $\hat{g}(P_{m,n})=G(n)$, and
$$\displaystyle \max_{1\leq i \leq r, 1 \leq j \leq s}\{h(u_{i}^{m}),h(v_{j}^{n})\}\leq\max \{n,m\} \max_{1\leq i \leq r, 1 \leq j \leq s}\{h(u_{i}),h(v_{j})\},$$
we can write the above inequality as
$$\displaystyle \sum_{v \in M_{k} \setminus S}-\log^{-}\max\{|F(m)|_{v},|G(n)|_{v}\}<\epsilon \max\{n,m\}.$$
As in the $m=n$ case, we cover $Z$ by a hypersurface defined by a polynomial equation:
$$\displaystyle Exc(x_{1},\ldots,x_{r+s+2})=0.$$ 
Hence, all the points $P_{m,n}$ in $Z$ must satisfy the above equation. Therefore, if $P_{m,n} \in Z$, after combining the terms with the same exponents on $u_{1},\ldots,u_{r},v_{1},\ldots,v_{s}$, we obtain an equation in terms of $m,n,u_{1},\ldots, u_{r},v_{1},\ldots,v_{s}$:
$$\displaystyle Exc(m,u_{1}^{m},\ldots,u_{r}^{m},n,v_{1}^{n},\ldots,v_{s}^{n})=\sum_{w=1}^{W} P_{w}(m,n)u_{1}^{ms_{1,w}}\cdots u_{r}^{ms_{r,w}}v_{1}^{nt_{1,w}}\cdots v_{s}^{nt_{s,w}}=0,$$
where $P_{w}(m,n)$ is a non-zero polynomial in $m$ and $n$. It follows from Theorem \ref{recbad1} and Lemma \ref{sinvar} that after excluding finitely many pairs $(m,n)$ we can assume that $(m,n)$ is not a zero of any of the polynomials $P_w$.\\

Dividing both sides by the negative of the first term,
$$\displaystyle \sum_{w=2}^{W} \dfrac{P_{w}(m,n)u_{1}^{ms_{1,w}}\cdots u_{r}^{ms_{r,w}}v_{1}^{nt_{1,w}}\cdots v_{s}^{nt_{s,w}}}{{-P_{1}(m,n)u_{1}^{ms_{1,1}}\cdots u_{r}^{ms_{r,1}}v_{1}^{nt_{1,1}}\cdots v_{s}^{nt_{s,1}}}}=1.$$
Let $Q_{w}(m,n)=\dfrac{P_{w}(m,n)}{-P_{1}(m,n)}$ ($w=2,\ldots, W$), then
$$\displaystyle \sum_{w=2}^{W} Q_{w}(m,n)u_{1}^{m(s_{1,w}-s_{1,1})}\cdots u_{r}^{m(s_{r,w}-s_{r,1})}v_{1}^{n(t_{1,w}-t_{1,1})}\cdots v_{s}^{n(t_{s,w}-t_{s,1})}=1.$$
Letting $s'_{i,w}=s_{i,w}-s_{i,1}$, $t'_{i,w}=t_{i,w}-t_{i,1}$, we have
$$\displaystyle \sum_{w=2}^{W} Q_{w}(m,n)u_{1}^{ms'_{1,w}}\cdots u_{r}^{ms'_{r,w}}v_{1}^{nt'_{1,w}}\cdots v_{s}^{nt'_{s,w}}=1$$
with $s'_{i,w}$, $t'_{i,w}$ fixed and only depending on $Exc$.\\

As in the proof of Lemma \ref{sinvar}, it follows from Lemma \ref{multindlemma} that if $\min\{m,n\}$ is sufficiently large, then Corollary \ref{unieqc} applies to the equation
$$\displaystyle \sum_{w=2}^{W}Q_{w}(m,n)u_{1}^{ms'_{1,w}}\cdots u_{r}^{ms'_{r,w}}v_{1}^{nt'_{1,w}}\cdots v_{s}^{nt'_{s,w}}=1,$$
and we conclude that one of the summands on the left-hand side belongs to a finite set $\mathcal{F}$.  But since
$$h(Q_{w}(m,n)u_{1}^{ms'_{1,w}}\cdots u_{r}^{ms'_{r,w}}v_{1}^{nt'_{1,w}}\cdots v_{s}^{nt'_{s,w}}) \to \infty \text{ as } \min\{m,n\}\to\infty,$$
and $\min\{m,n\}\to \infty$ also means $\max\{m,n\}\to\infty$ by the remarks at the beginning of the proof, this implies that there are only finitely many possibilities for the pair $(m,n)$.
\end{proof}

We now prove a result in the general case where the roots of $F$ and $G$ are not necessarily independent. The following theorem gives an improvement to Theorem 1.8 (ii) of Grieve-Wang \cite{GW}, who proved a similar result but with $\log \max\{m,n\}$ replaced by the weaker expression $o(\max\{m,n\})$.
\begin{Th}\label{recgood}
Let
$$\displaystyle F(m)=\sum_{i=1}^{s}p_{i}(m)\alpha_{i}^{m}$$
$$\displaystyle G(n)=\sum_{j=1}^{t}q_{j}(n)\beta_{j}^{n}$$
define two distinct algebraic linear recurrence sequences, where $p_{i}$ and $q_{j}$ are polynomials. Let $k$ be a number field such that all coefficients of $p_{i}$ and $q_{j}$ and $\alpha_{i}$, $\beta_{j}$ are in $k$, for $i=1,\ldots,s$, $j=1,\ldots,t$. Let
$$S_{0}=\{v \in M_{k}:\max\{|\alpha_{1}|_{v},\ldots,|\alpha_{s}|_{v},|\beta_{1}|_{v},\ldots,|\beta_{t}|_{v}\} < 1\}.$$
Then there are finitely many choices of nonzero integers $(a_{i},b_{i}, c_{i}, d_{i}),\  a_{i}c_{i} \neq 0$ such that all solutions $(m,n) \in \mathbb{N}^{2}$ of the inequality
\begin{align}\label{finstat}
\displaystyle \sum_{v \in M_{k} \setminus S_{0}}-\log^{-}\max\{|F(m)|_{v},|G(n)|_{v}\} > \epsilon \max\{m,n\}
\end{align}
are of the form:
$$(m,n)=(a_{i}t+b_{i},c_{i}t+d_{i})+(\mu_{1},\mu_{2}),\ |\mu_{1}|,|\mu_{2}| \ll \log t,\ t \in \mathbb{N},\ i=1,\ldots,r.$$
\end{Th}

\begin{proof}
Now let $\{u_{1},\ldots,u_{r}\}$ be a set of generators which generates the roots of $F$ and $G$ and assume that the $u_{i}$'s are multiplicatively independent (as in the proof of Theorem \ref{recbad1}). It follows from the first part of the proof of Theorem \ref{recbad2} (using the points $P_{m,n}=(m,u_{1}^{m},...,u_{r}^{m},n,u_{1}^{n},...,u_{r}^{n})$) that all but finitely many pairs $(m,n)$ that fail the above inequality either satisfy finitely many linear relations $(m,n)=(a_{i}t+b_{i}, c_{i}t+d_{i})$ or satisfy an exponential-polynomial equation coming from Schmidt’s Subspace Theorem:
$$\displaystyle \sum_{w=1}^{W}P_{w}(m,n)u_{1}^{ms_{1,w}+nt_{1,w}}\cdots u_{r}^{ms_{r,w}+nt_{r,w}}=0,$$
where $P_{w}(m,n)$ are non-zero polynomials in $m$ and $n$. After ignoring finitely many arithmetic progressions, we can assume that $(m,n)$ is not a zero of any $P_{w}$ by Lemma \ref{sinvar}.\\

Dividing by the first term, we need to study the solutions $(m,n)$ to the equation
\begin{align}\label{finexc}
    \displaystyle  \sum_{w=2}^{W}Q_{w}(m,n)u_{1}^{ms'_{1,w}+nt'_{1,w}}\cdots u_{r}^{ms'_{r,w}+nt'_{r,w}}=1 
\end{align}
where $Q_{w}=-P_{w}(m,n)/P_{1}(m,n)$.

As in Theorem \ref{recbad2}, we can estimate the non-$S$ contribution to the height of each term in $\eqref{finexc}$ by
\begin{align}\label{qest}
    h(Q_{w}(m,n))\leq R_{w}\max\{\log m,\log n\}+O(1)
\end{align}
for some constant $R_{w}$. On the other hand, we have the estimate
\begin{align}\label{twest}
\displaystyle
&h(Q_{w}(m,n)u_{1}^{ms'_{1,w}+nt'_{1,w}}\cdots u_{r}^{ms'_{r,w}+nt'_{r,w}}) \nonumber \\
&\geq c_{w}\max_{i}\{|ms'_{i,w}+nt'_{i,w}|\}-R_w\log\max\{m,n\}+O(1)
\end{align}
for some constant $c_{w}$.

In order to apply Corollary \ref{unieqc}, we need each summand to be in $k_{S,\delta}$ for some $\delta < \dfrac{1}{W(W+1)}$. So it suffices to require, for every $w$,
\begin{align}\label{logreg}
    \displaystyle C_{w}\max\{\log m,\log n\} \leq \max_{i}\{|ms'_{i,w}+nt'_{i,w}|\}
\end{align}
where $C_{w}=\dfrac{4R_{w}W(W+1)}{c_{w}}$.
For those $(m,n)$ satisfying $\eqref{logreg}$, we can apply Corollary \ref{unieqc} to $\eqref{finexc}$. But since
$$h(Q_{w}(m,n)u_{1}^{ms'_{1,w}+nt'_{1,w}}\cdots u_{r}^{ms'_{r,w}+nt'_{r,w}}) \to \infty \text{ as } \max\{m,n\} \to \infty,$$
this implies that there are only finitely many solutions $(m,n)$ of
$$\displaystyle \sum_{v \in M_{k} \setminus S_{0}}-\log^{-}\max\{|F(m)|_{v},|G(n)|_{v}\} > \epsilon \max\{m,n\}$$
satisfying $\eqref{logreg}$.

For pairs $(m,n)$ not satisfying $\eqref{logreg}$, there exists some $w_0$ and $i_0$ such that 
$$ (s'_{i_{0},w_{0}},t'_{i_{0},w_{0}})\neq (0,0)$$ 
and
$$\displaystyle C_{w_{0}}\max\{\log m,\log n\} \geq |ms'_{i_{0},w_{0}}+nt'_{i_{0},w_{0}}|.$$
In fact, since as previously observed, $\min\{m,n\}\gg \max\{m,n\}$ for solutions $(m,n)$ to $\eqref{finstat}$, we may assume $s'_{i_{0},w_{0}}t'_{i_{0},w_{0}}\neq 0$.

Fix such a pair $(m,n)$ and corresponding $w_0$ and $i_0$. Let $a=s'_{i_0,w_0}, b=t'_{i_0,w_0}$, and $t=\max\left\{\left\lfloor\frac{m}{b}\right\rfloor,\left\lfloor-\frac{n}{a}\right\rfloor\right\}$. Replacing $(a,b)$ by $(-a,-b)$ if necessary, we may assume that $a<0$ and $b>0$. We set $\mu_{1}=m-bt$ and $\mu_{2}=n+at$, so that $(m,n)=(bt,-at)+(\mu_{1},\mu_{2})$. Then clearly $\min\{|\mu_{1}|,|\mu_{2}|\}\leq \max\{|a|,|b|\}$ and so
\begin{align*}
\max\{|\mu_{1}|,|\mu_{2}|\}\ll |a\mu_{1}+b\mu_{2}|=|am+bn|\ll \max\{\log m,\log n\}\ll \log t,
\end{align*}
as desired.
\end{proof}

\section{Fuchs-Heintze's Integral zeros of exponential-polynomials}
To better shape the exceptional points inside the logarithmic region, we need some results for exponential-polynomial equations. In the recent work of Fuchs-Heintze \cite{FH}, one can write integral solutions of a exponential-polynomial equation in the form of linear recurrences. Here we use a slightly different version of their theorem. Though the proof is mostly remains identical, we include them here for completeness, following Fuchs-Heintze \cite{FH}.

Before we proving the main result, some lemmas are needed. 

\begin{Lemma}\label{htieq}
Let $k$ be a number field and $f(x)=b_{0}x^{d}+\cdots+b_{d} \in k[x]$ a polynomial with $b_{0}\neq 0$. Assume that $\xi \in k$ is a zero of $f$. Then we have
$$h(\xi)\leq h(b_{0}:b_{1}:\ldots:b_{d})+\log d.$$
\end{Lemma}

For a number field $k$ and a nonzero $x \in k^*$, we define 
$$h_{\bar{S},0}(x)=\sum_{x \not \in S}\log^+|x|_v$$ 
and 
$$h_{\bar{S},\infty}(x)=\sum_{x \not \in S}\log^+|1/x|_v.$$ 
Note that $h_{\bar{S},0}(x)+h_{\bar{S},\infty}(x)$ is what we defined for $h_{\bar{S}}(x)$ in the first section. 

Now fix an absolute value $v$ in $M_k$. 
\begin{Lemma}[Corvaja-Zannier \cite{CZ2}]\label{CZ05b}
Let $ f(\textbf{x})=\sum_{\textbf{i}}a_{\textbf{i}}\textbf{x}^{\textbf{i}}$ be a power series with algebraic coefficients in $\mathbb{C}_{v}$ converging in a neighborhood of the origin in $\mathbb{C}_{v}^{k}$. Let $S$ be a finite set of absolute values of $K$ containing the archimedean ones. Let $\textbf{x}_{n}=(x_{n1},\ldots,x_{nl})$ ($n \in \mathbb{N}_{>0}$) be a sequence in $(k^*)^l$, tending to $0$ in $k_{v}^{n}$, and such that $f(\textbf{x}_{n})$ is defined and belongs to $k$. Suppose that
\begin{enumerate}
    \item For $i=1,\ldots, l$ we have $h_{\bar{S},0}(x_{ni})+h_{\bar{S},0}(x_{ni}^{-1})=o(h(x_{ni}))$ as $n\to \infty$.
    \item $\displaystyle \hat{h}(\textbf{x}_{n})=O(-\log \max_{i}|x_{ni}|_{v})$.
    \item $h_{\bar{S},0}(f(\textbf{x}_{n}))=o(h(\textbf{x}_{n}))$.
    \item $h(f(\textbf{x}_{n}))=O(h(\textbf{x}_{n}))$.
\end{enumerate}
Then there exists a finite number of cosets $\textbf{u}_{1}H_{1},\ldots, \textbf{u}_{t}H_{t} \subset \mathbb{G}_{m}^{k}$, where $H_i$ are connected algebraic subgroups of $\mathbb{G}_m^k$, such that $\{\textbf{x}_{n}\}_{n \in \mathbb{N}} \subset \bigcup_{i=1}^{t}\textbf{u}_{i}H_{i}$ and such that, for $i=1,\ldots, t$, the restriction of $f(\textbf{x})$ to $\textbf{u}_{i}H_{i}$ coincides with a polynomial in $k[\textbf{x}]$.
\end{Lemma}

\begin{Lemma}[Implicit Function Theorem]\label{IFT}
Suppose the power series
$$F(x_{1},\ldots,x_{r},y) =\sum_{|\alpha|\geq 0,k\geq 0}a_{\alpha,k}x_{1}^{\alpha_{1}}\cdots x_{r}^{\alpha_{r}}y^{k}$$
is absolutely convergent for $|x_{1}|+\cdots+|x_{r}|\leq R_{1}$, $|y-y_{0}|\leq R_{2}$ for some $y_{0} \in \bar{\mathbb{Q}}$ with $F(0,\ldots,0,y_{0})=0$. If 
$$\dfrac{\partial F}{\partial y}(0,\ldots,0,y_{0})\neq 0,$$
then there exists $r_{0} >0$ and a power series 
$$f(x_{1},\ldots,x_{r})=\sum_{|\alpha|>0}c_{\alpha}x_{1}^{\alpha_{1}}\cdots x_{r}^{\alpha_{r}}$$
which is absolutely convergent for $|x_{1}|+\cdots+|x_{r}| \leq r_{0}$ and 
$$F(x_{1},\ldots,x_{r},f(x_{1},\ldots, x_{r}))=0.$$
Moreover, if the coefficients of $F$ are algebraic, then the coefficients of $f$ are also algebraic.
\end{Lemma}

Denote by $\mathcal{H}(k)$ the Hadamard ring over $k$; i.e., the set of sequences in $k$ satisfying a linear recurrence relation.
\begin{Th}[Hadamard Quotient Theorem]
Let $k$ be a field of characteristic zero and let $b(n),c(n) \in \mathcal{H}(k)$. Let $a(n)$ be a sequence whose elements are in a subring $R$ of $k$ which is finitely generated over $\mathbb{Z}$, and suppose that $a_n=\frac{b(n)}{c(n)}$ whenever the quotient is defined. Then there exists an element $a(n) \in \mathcal{H}(k)$ such that $a(n)=a_n$ for every $n$ provided $c(n) \neq 0$. 
\end{Th}

Then we have a variant version of Theorem 1 in Fuchs-Heintze's situation.
\begin{Th}\label{FH1}
Let $k$ be a number field and $g \in k[x_{0},x_{1},\ldots,x_{r},z]$ be a polynomial which can be written in the form
$$g(x_{0},x_{1},\ldots,x_{r},z)=a_{0}(x_{0},x_{1},\ldots,x_{r})z^{d}+\cdots+a_{d}(x_{0},x_{1},\ldots,x_{r})$$
for polynomails $a_{0}(x_{0},x_{1},\ldots,x_{r}),\ldots,a_{d}(x_{0},x_{1},\ldots,x_{r})$. 

Furthermore, let $\tilde{g} \in k[x_{0},x_{1},\ldots,x_{r},z]$ be the polynomial given by the equation
$$\tilde{g}(x_{0},x_{1},\ldots,x_{r},\tilde{z})=a_{0}(x_{0},x_{1},\ldots,x_{r})^{d-1}g(x_{0},x_{1},\ldots,x_{r},z),$$
where $\tilde{z}=a_{0}(x_{0},x_{1},\ldots,x_{r})z$.
Assume that either $a_{0}(0,\ldots,0)\neq 0$ and $g(0,\ldots,0,z)$ has no multiple zero as a polynomial in $z$, or $a_{0}(0,\ldots,0)=0$ and $\tilde{g}(0,\ldots,0,\tilde{z})$ has no multiple zero as a polynomial in $\tilde{z}$. Moreover, let $\gamma_{1},\ldots, \gamma_{r} \in k^{*}$ such that $|\gamma_{i}|<1$ for all $1\leq i \leq r$, where $|\cdot|$ denotes the usual absolute value on $\mathbb{C}$, and such that no ratio $\gamma_{i}/\gamma_{j}$ for $i\neq j$ is a root of unity. Assume that $S$ is a finite set of places of $k$, containing all archimedean ones, and such that $\gamma_{1},\ldots,\gamma_{r}$ and all non-zero coefficients of $a_{i}(x_{0},\ldots,x_{r})$ for $i=0,\ldots,d$ are $S$-units.\\
Then there are finitely many polynomials $P_{1},\ldots, P_{t}$ in $r+1$ unknowns such that the following holds: For each solution $(n,z) \in \mathbb{N} \times \mathcal{O}_{S}$ of $g(n\gamma_{1}^{n},\gamma_{1}^{n},\ldots, \gamma_{r}^{n},z)=0$ with $z \neq 0$ and $n$ large enough, there exists an index $i$ such that $z'=P_{i}(n\gamma_{1}^{n},\gamma_{1}^{n},\ldots,\gamma_{r}^{n})$, where $z'= z$ in the case that $a_{0}(0,\ldots,0)\neq 0$ and $z'=a_{0}(n\gamma_{1}^{n},\gamma_{1}^{n},\ldots,\gamma_{r})^{n}z$ in the case that $a_{0}(0,\ldots,0)=0$, respectively.
\end{Th}

\begin{proof}
We write 
$$g(n\gamma_{1}^{n},\gamma_{1}^{n},\ldots,\gamma_{r}^{n},z)=a_{0}(n\gamma_{1}^{n},\gamma_{1}^{n},\ldots,\gamma_{r}^{n})z^{d}+\cdots+a_{d}(n\gamma_{1}^{n},\gamma_{1}^{n},\ldots,\gamma_{r}^{n})$$
and
$$\tilde{g}(n\gamma_{1}^{n},\gamma_{1}^{n},\ldots,\gamma_{r}^{n},\tilde{z})=\tilde{a_{0}}(n\gamma_{1}^{n},\gamma_{1}^{n},\ldots,\gamma_{r}^{n})\tilde{z}^{d}+\cdots+\tilde{a_{d}}(n\gamma_{1}^{n},\gamma_{1}^{n},\ldots,\gamma_{r}^{n}).$$

Since no ratio $\gamma_{i}/\gamma_{j}$ is a root of unity, by Theorem \ref{SML}, for $n \in \mathbb{N}$ large enough we have $$a_{i}(n\gamma_{1}^{n},\gamma_{1}^{n},\ldots,\gamma_{r}^{n})\neq 0$$ 
for all $1\leq i\leq d$. As the $\tilde{a_{j}}(n\gamma_{1}^{n},\gamma_{1}^{n},\ldots,\gamma_{r}^{n})$ arise by construction as products of the $a_{i}(n\gamma_{1}^{n},\gamma_{1}^{n},\ldots,\gamma_{r}^{n})$, they are non-zero as well for $n$ large and all $1\leq j\leq d$. Thus, we will assume that $n$ is large enough such that all $a_{i}$ and $\tilde{a_{j}}$ are non-zero.\\
We break into two cases: Let us assume that $a_{0}(0,\ldots,0) \neq 0$ and that $g(0,\ldots,0,z)$ has only simple zeros. In this case we only work with $g$. The other case, when $a_{0}(0,\ldots,0)=0$ and $\tilde{g}(0,\ldots,0,z)$ has only simple zeros, works in the same way considering $\tilde{g}$ instead of $g$ with the transformation $\tilde{z}=a_{0}(n\gamma_{1}^{n},\gamma_{1}^{n},\ldots,\gamma_{r}^{n})z$ and $\tilde{g}$ is monic in $\tilde{z}$. Hence, we write down only the first case.\\

Consider now an infinite sequence $((n,z_{n}))_{n\in W}$ of solutions of the equation
\begin{align}
    \label{epeq}
    g(n\gamma_{1}^{n},\gamma_{1}^{n},\ldots,\gamma_{r}^{n},z)=0 
\end{align}
in $(n,z) \in \mathbb{N}\times \mathcal{O}_{S}$ with $z\neq 0$, where $W$ is an infinite subset of $\mathbb{N}$. Since for fixed $n$ there are at most $d$ possible values for $z$, all solutions are contained in finitely many such sequences. Therefore we restrict to one of them. To simplify the notation, we write $a_{i}(n)$ instead of $a_{i}(n\gamma_{1}^{n},\gamma_{1}^{n},\ldots,\gamma_{r}^{n})$.

First, let us show $(z_{n})$ is bounded. Since $z_{n}$ is a solution of (\ref{epeq}) we have
$$a_{0}(n)z_{n}^{d}+\cdots+a_{d}(n)=0.$$
Using $z_{n}\neq 0$ this is equivalent to 
$$a_{0}(n)z_n=-a_{1}(n)-\cdots-a_{d}(n)z_{n}^{-(d-1)}.$$
For $|z_{n}|>1$ this yields
$$|a_{0}(n)z_{n}|\leq |a_{1}(n)|+\cdots+|a_{d}(n)|.$$
Hence, we have an upper bound
$$|z_{n}|\leq \max\left\{1,\dfrac{|a_{1}(n)|+\cdots+|a_{d}(n)|}{|a_{0}(n)|}\right\}.$$
As $a_{0}(0,\ldots,0)\neq 0$, the denominator $|a_{0}(n)|$ is bounded away from zero, together with $|\gamma_{i}|<1$ and so the exponential parts dominate each $a_{i}$, we obtain the boundedness of the sequence $(z_{n})$.

We estimate, for those solutions $(n,z_n)$
$$\begin{aligned}
|g(0,\ldots,0,z_{n})|&=|g(0,\ldots,0,z_{n})-g(n\gamma_{1}^{n},\gamma_{1}^{n},\ldots,\gamma_{r}^{n},z_{n})|\\
&\leq \sum_{i=0}^{d}\underbrace{|a_{i}(0,\ldots,0)-a_{i}(n\gamma_{1}^{n},\gamma_{1}^{n},\ldots,\gamma_{r}^{n})|}_{n \to \infty, \text{ this} \to 0}\cdot|z_{n}|^{d-i}.
\end{aligned}$$
By the boundedness of $z_{n}$, we obtain $g(0,\ldots,0,z_n) \to 0$ as $n \to \infty$. Thus, $z_{n}$ lie in the union of arbitrary small neighborhoods of the solutions of $g(0,\ldots,0,z)=0$ for $n$ large enough. Therefore we can split the sequence into finitely many subsequences and restrict only on one infinite sequence $(z_{n})$ which converges to a solution $z_{*}$ of $g(0,\ldots,0,z)=0$.\\

Let us derive an upper bound for the height of $z_{n}$. By Lemma \ref{htieq}, we have 
$$\begin{aligned}h(z_{n})& \leq h(1:a_{0}(n):\ldots:a_{d}(n))+\log d\\
&\leq h(a_{0}(n))+\cdots+h(a_{d}(n))+\log d.
\end{aligned}$$
It turns out we need bounds for $h(a_{i}(n))$. We now estimate
$$\begin{aligned}
h(a_{i}(n))&=h\left(\sum_{k_{0},\ldots,k_{r}} \lambda_{k_{0},\ldots,k_{r}}^{(i)}(n\gamma_{1}^{n})^{k_{0}}\gamma_{1}^{nk_{1}}\cdots \gamma_{r}^{nk_{r}}\right)\\
&\leq \sum_{k_{0},\ldots,k_{r}}h(\lambda_{k_{0},\ldots,k_{r}}^{(i)}(n\gamma_{1}^{n})^{k_{0}}\gamma_{1}^{nk_{1}}\cdots \gamma_{r}^{nk_{r}})+C_{i,0}\\
&\leq \sum_{k_{0},\ldots,k_{r}}(h(\lambda_{k_{0},\ldots,k_{r}}^{(i)})+k_{0}h(n\gamma_{1}^{n})+k_{1}h(\gamma_{1}^{n})+\cdots+k_{r}h(\gamma_{r}^{n}))+C_{i,0}\\
&=C_{i,0}'+C_{i,0}h(n\gamma_{1}^{n})+C_{i,1}h(\gamma_{1}^{n})+\cdots+C_{i,r}h(\gamma_{r}^{n})\\
&\leq C_{i,0}'+(C_{i,0}+\cdots+C_{i,r})\max_{j=1,\ldots,r}\{h(n\gamma_{1}^{n}),h(\gamma_{j}^{n})\}\\
&\leq C_{i,0}'+C_{i}'h(1:n\gamma_{1}^{n}:\gamma_{1}^{n}:\ldots:\gamma_{r}^{n}),
\end{aligned}$$
where the constants depend on $a_{i}$. Combining these two inequalities, we have the upper bound 
$$h(z_{n})\leq C_{1}+C_{2}h(1:n\gamma_{1}^{n}:\gamma_{1}^{n}:\ldots:\gamma_{r}^{n}).$$

Now we are ready to show the power series form and even the polynomial form of $z_{n}$. Note that $g(0,\ldots,0,z)$ has no multiple zero as a polynomial in $z$ is equivalent to 
$$\dfrac{\partial g}{\partial z}(0,\ldots,0,z_{*})\neq 0$$
for all $z_{*}$ satisfying $g(0,\ldots,0,z_{*})=0$. Then we apply the Lemma \ref{IFT}, to conclude that there is a power series $f(x_{0},\ldots,x_{r})$, for $n$ sufficiently large,
$$z_{n}=f(n\gamma_{1}^{n},\gamma_{1}^{n},\ldots,\gamma_{r}^{n}).$$
Next we would like to apply Lemma \ref{CZ05b}. We check the conditions. We have a power series $f(x_{0},\ldots,x_{r})$ with algebraic coefficients converging in a neighborhood of the origin with respect to the standard absolute value in $\mathbb{C}$. Define the vector $\textbf{x}_{n}$ to be $(w_{n}\gamma_{1}^{w_{n}},\gamma_{1}^{w_{n}},\ldots,\gamma_{r}^{w_{n}})$, where $w_{n}$ is the $n$-th index in the set $W$. Then $\textbf{x}_{n}$ tends to zero, and $f(\textbf{x}_{n})=z_{w_{n}}$ is defined and belongs to $k$.\\

It remains to check the four height conditions. For $i\neq 0$, $x_{ni}=\gamma_{i}^{w_{n}}$ is an $S$-unit, so $h_{\bar{S},0}(x_{ni})+h_{\bar{S},0}(x_{ni}^{-1})=0$. For $i=0$, $h_{\bar{S},0}(x_{n0})+h_{\bar{S},0}(x_{n0}^{-1}) = h_{\bar{S},0}(w_{n})+h_{\bar{S},0}(w_{n}^{-1}) \leq h(w_{n})+h(w_{n}^{-1})$, and as $w_{n} \to \infty$, we have $h(w_{n})+h(w_{n}^{-1})=o(h(w_{n}\gamma_{1}^{w_{n}}))$. So the first condition is satisfied.

Consider, for sufficiently large $w_{n}$,
$$\hat{h}(w_{n}\gamma_{1}^{w_{n}},\gamma_{1}^{w_{n}},\ldots,\gamma_{r}^{w_{n}})=h(w_{n}\gamma_{1}^{w_{n}})+\sum_{i=1}^{r}h(\gamma_{i}^{w_{n}})\leq 3w_{n}h(\gamma_{1})+w_{n}\sum_{i=2}^{r}h(\gamma_{i})=w_{n}C_{3}.$$
On the other hand, we also have
$$-\log(\max_{i}|x_{ni}|) = -\log (\max\{|w_n\gamma_1^{w_n}|,|\gamma_{i}^{w_{n}}|\}) \geq w_{n}(-\log (\max_{i} |\gamma_{i}|))-\log w_n \geq w_{n}C_{4}.$$
Hence the second condition is satisfied.

For the third condition,
$$h_{\bar{S},0}(f(\textbf{x}_{n}))=h_{\bar{S},0}(z_{w_{n}}),$$
by our assumption, $z_{w_{n}}$ is an $S$-integer. Hence, $h_{\bar{S},0}(z_{w_{n}})=0$.

The last condition is automatic from the inequality we already have
$$h(z_{w_{n}})\leq C_{1}+C_{2}h(1:w_{n}\gamma_{1}^{w_{n}}:\gamma_{1}^{w_{n}}:\ldots:\gamma_{r}^{w_{n}}).$$

Now we apply Lemma \ref{CZ05b} and it gives us finitely many cosets $\textbf{u}_{1}H_{1},\ldots,\textbf{u}_{t}H_{t}$ in $\mathbb{G}_{m}^{r+1}$ such that $\{w_{n}\gamma_{1}^{w_{n}},\gamma_{1}^{w_{n}},\ldots,\gamma_{r}^{w_{n}}\}_{n \in \mathbb{N}} \subset \bigcup_{i=1}^{t}\textbf{u}_{i}H_{i}$ and such that for $i=1,\ldots,t$, the restriction of $f$ to $\textbf{u}_{i}H_{i}$ coincides with a polynomial $P_{i}$ in $k[x_{0},\ldots,x_{r}]$. Hence, for all $n \in W$, there exists an index $i$ such that $(n\gamma_{1}^{n},\gamma_{1}^{n},\ldots,\gamma_{r}^{n}) \in \textbf{u}_{i}H_{i}$ and $z_{n}=P_{i}(n\gamma_{1}^{n},\gamma_{1}^{n},\ldots,\gamma_{r}^{n})$.\\

The second case is similar to the first case.
\end{proof}

We are now ready to formulate the next theorem, which is a variant of Theorem 2 of Fuchs-Heintze:
\begin{Th}\label{FH1}
Let $K,g,\tilde{g},\gamma_{1},\ldots,\gamma_{r}$ as before. Then there are finitely many linear recurrences $R_{1}(n),\ldots, R_{s}(n)$ with algebraic roots and algebraic coefficients, arithmetic progressions $\mathcal{P}_{1},\ldots,\mathcal{P}_{s}$, as well as finite sets $M$ and $N$ such that the set $L$ of solutions $(n,z) \in \mathbb{N}\times \mathcal{O}_{S}$ of the equation $g(n\gamma_{1}^{n},\gamma_{1}^{n},\ldots,\gamma_{r}^{n},z)=0$ can be described by
$$L=\bigcup_{j=1}^{s}\{(n,R_{j}(n)):n \in \mathcal{P}_{j},R_{j}(n) \in \mathcal{O}_{S}\}\cup \{(n,z):n \in N,z \in \mathcal{O}_{S}\}\cup M.$$
\end{Th}

\begin{proof}
First note that for a fixed value of $n$, the equation
\begin{align}\label{epeq2}
    g(n\gamma_{1}^{n},\gamma_{1}^{n},\ldots,\gamma_{r}^{n},z)=0
\end{align}
has either finitely many solutions $z$ if not all $a_{i}(n)$ are zero, or holds for all values of $z$ if all $a_{i}(n)$ are zero. So for finitely many $n$ of the solutions $(n,z) \in \mathbb{N}\times \mathcal{O}_{S}$, these two cases fit into the latter two patterns of the solutions. From now on, we assume $n$ is sufficiently large.\\

For $z = 0$, the equation reduces to $a_{d}(n)=0$, which has only finitely many solutions in $n$ since it is a non-degenerate linear recurrence, or is identically zero which fits the pattern $\{(n,0):n\in \mathbb{N}\}$. Therefore, we assume $z\neq 0$.\\

It remains to classify the rest of the solutions. Applying Theorem \ref{FH1}, there are finitely many polynomials $P_{1},\ldots,P_{t}$ such that for all remaining solutions $(n,z)$ there is an index $i \in \{1,\ldots,t\}$ with the property that either
\begin{align}\label{epc1}
    z=P_{i}(n\gamma_{1}^{n},\gamma_{1}^{n},\ldots,\gamma_{r}^{n}),
\end{align}

or
\begin{align}\label{epc2}
    z=\dfrac{P_{i}(n\gamma_{1}^{n},\gamma_{1}^{n},\ldots,\gamma_{r}^{n})}{a_{0}(n\gamma_{1}^{n},\gamma_{1}^{n},\ldots,\gamma_{1}{n})}.
\end{align}

Now we have four cases: For finitely many solutions of pattern (\ref{epc1}) and (\ref{epc2}), they fit into $M$ are already two of the fours cases.\\

The third case is when for an fixed index $i$ such that there are infinitely many solutions $(n,z)$ of pattern (\ref{epc1}), $z$ is in the form of a given linear recurrence sequence. Note that (\ref{epeq2}) is saying a linear recurrence is zero. By Theorem \ref{SML}, we have the property of $n$ on some arithmetic progressions with $z$ a linear recurrence as well.\\

The last case (\ref{epc2}), we only need to do apply the Hadamard quotient theorem. Since $z$ is an $S$-integer and the ring of $S$-integers is finitely generated over $\mathbb{Z}$, so the conditions of the Hadamard quotient theorem are satisfied. This give $z$ a form of a linear recurrence again. The rest of the argument is similar to the above paragraph.
\end{proof}

\section{The exceptional case}
We give the exceptional case a further description, taking the advantage of the Fuchs-Heintze's theorems. First, we show that the exceptional cases occurs in the Theorem \ref{recgood} only if $m,n$ are in the form of linear recurrences. Moreover, in the exceptional cases, by appropriate coordinate change, $F$ and $G$ have a nontrivial common factor. 
\begin{Th}\label{finimp}
Let
$$\displaystyle F(m)=\sum_{i=1}^{s}p_{i}(m)\alpha_{i}^{m}$$
$$\displaystyle G(n)=\sum_{j=1}^{t}q_{j}(n)\beta_{j}^{n}$$
define two distinct algebraic linear recurrence sequences, where $p_{i}$ and $q_{j}$ are polynomials. Let $k$ be a number field such that all coefficients of $p_{i}$ and $q_{j}$ and $\alpha_{i}$, $\beta_{j}$ are in $k$, for $i=1,\ldots,s$, $j=1,\ldots,t$. Let
$$S_{0}=\{v \in M_{k}:\max\{|\alpha_{1}|_{v},\ldots,|\alpha_{s}|_{v},|\beta_{1}|_{v},\ldots,|\beta_{t}|_{v}\} < 1\}.$$
Then there are finitely many choices of nonzero integers $(a_{i},b_{i}, c_{i}, d_{i}),\  a_{i}c_{i} \neq 0$ such that all but finitely many solutions $(m,n) \in \mathbb{N}^{2}$ of the inequality
$$\displaystyle \sum_{v \in M_{k} \setminus S_{0}}-\log^{-}\max\{|F(m)|_{v},|G(n)|_{v}\} > \epsilon \max\{m,n\},$$
either satisfy finitely many linear relations:
$$(m,n)=(a_{i}t+b_{i},c_{i}t+d_{i}), i=1,\ldots,r,$$
or there exist a pair of constants $(a,b)$ with $T:=|am+bn| \ll O(\max\{\log m,\log n\})$ and linear recurrences $f$ and $g$ indexed by $T$ such that $m=f(T)$ and $n=g(T)$. \\
Moreover, assume $\{u_i\}_{i=1,\ldots,y_r}$ is the set of the combined roots of $F,G,m,n$ such that $F$ and $G$ can be written as polynomials in variables $T,x_1,\ldots,x_r,y_1,\ldots, y_r$, where $x_i=u_i^T$ and $y_i=u_i^m$. Then $F,G$ admit a non-trivial common divisor in $k[T,x_1,\ldots,x_r,y_1,\ldots,y_r]$.
\end{Th}

\begin{proof}
Let $u_{1},\ldots,u_{r}$ be the generators of the torsion-free group generated by the combined roots of $F$ and $G$. Without loss of generality, choose them such that $|u_{i}| \leq 1$. From the proof of Theorem \ref{recgood}, we get that the exceptional set is covered by
\begin{align}\label{impexc}
\sum_{w}Q_{w}(m,n)u_{1}^{ms_{1,w}+nt_{1,w}}\cdots u_{r}^{ms_{r,w}+nt_{r,w}}=1,
\end{align}
where $s_{i,w}, t_{i,w} \in \mathbb{Z}$. Let us denote the $w$-th term in $\eqref{impexc}$ by $T_{w}$. Again by Theorem \ref{recgood}, for such $(m,n)$, there exists a $w_{0}$ with the property:
$$\mathcal{P}:\ \max_{i}\{|ms_{i,w_{0}}+nt_{i,w_{0}}|\}\leq O(\max\{\log m,\log n\}).$$
Then the case splits into
\begin{enumerate}[(a)]
    \item All $w\in W$ satisfy $\mathcal{P}$.
    \item At least one of $w \in W$ does not satisfy $\mathcal{P}$.
\end{enumerate}

In case (b), partition indices $w$ according to $\mathcal{P}$: Let 
$$\bar{W_{1}}=\{w|T_{w} \text{ satisfies }\mathcal{P} \},\ \bar{W_{2}}=\{w|T_{w} \text{ does not satisfy }\mathcal{P}\}.$$
Then, we can write $\eqref{impexc}$ as 
$$\sum_{w \in \bar{W_{2}}}T_{w}=1-\sum_{w \in \bar{W_{1}}}T_{w}.$$
Note that the left hand side are all almost $(S,\delta)$-units and the right hand side has small height. Thus
$$\dfrac{\sum_{w\in \bar{W_{2}}}T_{w}}{1-\sum_{w \in \bar{W_{1}}}T_{w}}=1$$
is an almost $(S,\delta)$-unit equation. Indeed, if we denote by $S_w$ the $w$-th summand $\dfrac{T_{w}}{1-\sum_{w' \in \bar{W_{1}}}T_{w'}}$, then the above equation is
\begin{align}\label{simpunit}
    \sum_{w\in \bar{W_{2}}}S_w=1.
\end{align}
We shall show that $S_w$ is an $(S,\delta)$-unit for each $\delta >0$ and each $w \in \bar{W}_2$ provided $\min\{m,n\}$ or $\max\{m,n\}$ sufficiently large. Note that 
$$h(S_w) \geq h(T_w), \ w \in \bar{W}_2.$$
By (\ref{twest}) and the definition of $\bar{W}_2$, there exists constants $c_w$ and $c'_w$ such that for $w \in \bar{W}_2$
$$h(T_w) \geq c_w\max_i\{|ms_{i,w}+nt_{i,w}|\} \geq c'_w \max\{m,n\}.$$
On the other hand, by (\ref{qest}), there exists constants $R_w$ and $R'_w$ such that
$$\begin{aligned}
h_{\bar{S}}(S_w) &\leq h_{\bar{S}}(T_w)+h_{\bar{S}}(1-\sum_{w \in \bar{W_{1}}}T_{w})\\
&\leq 2R_w\max\{\log m,\log n\} + 2h(1-\sum_{w \in \bar{W_{1}}}T_{w})+O(1)\\
&\leq 2R_w\max\{\log m,\log n\} + \sum_{w \in \bar{W_{1}}}h(T_w) + O(1)\\
&\leq (2R_w+R'_w)\max\{\log m,\log n\} +O(1).
\end{aligned}$$
The last inequality holds from the definition of $\bar{W}_1$. Hence, $h_{\bar{S}}(S_w) \leq \delta h(S_w)$ holds for any small positive $\delta$ with sufficiently large $m,n$ and so $S_w$ is an $(S,\delta)$-unit. Now we apply Corollary \ref{unieqc} to (\ref{simpunit}), to obtain that there are only finitely many values for each $S_w$. But when $\min\{m,n\}$ or $\max\{m,n\} \to \infty$, we have $h(S_w) \to \infty$. That results in only finitely many choices of $m,n$.\\

Now it remains to consider (a). For sufficiently large $m,n$, let $T=ms_{w_{0},i_{0}}+nt_{w_{0},i_{0}}$ for some $w_{0},i_{0} \in \mathbb{Z}$ satisfying $T \ll \log \max\{m,n\}$. Note that for any $w_j,i_j$, we have that $ms_{w_{j},i_{j}}+nt_{w_{j},i_{j}}$ must be a constant multiple of $T$, otherwise it will contradicts to its logarithmically small size. Hence, we can write $\eqref{impexc}$ as
$$\sum_{w}Q_{w}(m)u_{1}^{c_{1,w}T}\cdots u_{r}^{c_{r,w}T}-1=0,$$
where $c_{i,w}$ are constants. Combining terms and multiplying across by the denominators denominators, we could rewrite the equation as
$$L_{d}(T)m^{d}+\cdots+L_{0}(T)=0,$$
where $L_i(T)$ are linear recurrences in $T$.
For the simplicity of the notation, we could write the equation as
$$g(T,v_{1}^{T},\ldots,v_{r'}^{T},m)=0,$$
where $g$ is a polynomial in $k[x_{0},\ldots,x_{r'},z]$ and $v_{j}$ are the combined roots of $L_{i}(T)$.\\

Here we assume that all $|v_{j}| \leq 1$. We split the analysis once again:
\begin{enumerate}[(i)]
    \item All $|v_{j}| < 1$.
    \item At least one of $v_{j}$ has $|v_{j}|=1$.
\end{enumerate}
For case (ii), we can write $v_{j}=v_{j}'^{-1}v_{j}''$, where both $|v_{j}'|$ and $|v_{j}''|$ are less than $1$. Then we write our $g$ as
$$g=g(T,v_{1}^{T},\ldots,\hat{v_{j}}^{T},v_{j}'^{T},v_{j}''^{T},v_{j+1}^{T},\ldots,v_{r'}^{T},m)=0,$$
where $\hat{v}_j$ means we omit $v_j$. This is a Laurent polynomial, but one can always turn it into a polynomial by multiplying it with an appropriate monomial. Thus from now on we reduce all cases to case (i).

Now we define $g_{0}$ as follows. Take the equation $g(x_{0},\ldots,x_{r'},x_{r'+1})=0$, suppose there exists a pair $(T,m)\in \mathbb{N}^{2} \subset \mathbb{N}\times \mathcal{O}_{S}$ such that $(T,v_{1}^{T},\ldots,v_{r'}^{T},m)$ is a zero of $g$. Let the rational function $g'$ be
$$g'(x_{0},\ldots,x_{r'},x_{r'+1})=g(x_{0}/x_{1},x_{1},\ldots,x_{r'},x_{r'+1}),$$
which is a Laurent polynomial, then we polynomialize it
$$g_{0}(x_{0},\ldots,x_{r'+1})=g'(x_{0},\ldots,x_{r'+1})x_{1}^{\deg_{x_{0}}g}.$$
Hence, we obtain a polynomial $g_{0} \in k[x_{0},\ldots,x_{r'+1}]$ with a zero $(Tv_{1}^{T},v_{1}^{T},\ldots,v_{r'}^{T},m)$. 

Apply the Theorem \ref{FH1}, we have, excluding a finite set and finitely many lines, $m$ has the form of a general linear recurrence in $T$. Similarly by symmetry, we obtain that $n$ also has the form of a general linear recurrence in $T$.\\

Now we make a new choice of $u_{i}$ to be the generators of the torsion-free group generated by roots of $F,G$ and $m,n$, such that each root is a power of those generators with positive exponents. We write $m(T)$ and $n(T)$ for the linear recurrence indexed by $T$. Let $x_{i}=u_{i}^{T}$ and $y_{i}=u_{i}^{m}$. Therefore, we can write $F$ and $G$ as polynomials in coordinates $[T,x_{1},\ldots,x_{r},y_{1},\ldots,y_{s}]$:
$$F(m(T))=\sum_{i}c_{i}(m(T))\prod_{l} y_{l}^{p_{l}},$$
$$G(n(T))=\sum_{j}d_{j}(n(T))\prod_{l'}x_{l'}^{p_{l'}}y_{l'}^{q_{l'}}.$$
Since $T \ll \log \max\{m,n\}$ and $m,n$ are positive integers, the $s_{w_0,i_0}$ and $t_{w_0,i_0}$ defining $T$ should be in opposite signs. Without loss of generality, we put $s_{w_0,i_0} < 0$ and $t_{w_0,i_0} > 0$. Furthermore, after possibly replacing $T$ by $T/t_{w_0,i_0}$, we assume $t_{w_0,i_0}=1$. Note that under such conditions $F$ and $G$ are indeed polynomials.\\

We claim that $F$ and $G$ in above polynomial forms must have a non-trivial gcd. Suppose they don't, then by Theorem \ref{ins}, we have the exceptional set is covered by
$$\sum_{i,j,t}c_{i,j,t}T^{t}\prod_{i,j}x_{r_{i}}^{p_{r_{i}}}y_{r_{j}}^{q_{r_{j}}}=0. $$
By moving and dividing by some term, we have 
$$\sum_{i,j,t}c_{i,j,t}T^{t}\prod_{i,j}x_{r_{i}}^{p_{r_{i}}}y_{r_{j}}^{q_{r_{j}}}=1.$$
Together with the definition of $x_{i}$ and $y_{i}$, after possibly extend $S$, we assume the coefficients of the equation and $x_{i},y_{j}$ are $S$-units. This implies, for each summand, its non-$S$ height only comes from the part $T^{t}$. 

Since $T^{t}\ll \log x_{r_{i}}^{p_{r_{i}}} \ll \log \log y_{r_{j}}^{q_{r_{j}}}$, when $T$ is sufficiently large, a summand fails to be an almost $(S,\delta)$-unit if and only if $p_{r_{i}}$ and $q_{r_{j}}$ are simultaneously zero. We can combine all terms with $p_{r_{i}}=q_{r_{j}}=0$, we have rational functions in $T$ instead of a power of $T$ in each term on the left. This would make every summand an almost $(S,\delta)$-unit for large enough $T$. Hence we assume from now, $p_{r_{i}}q_{r_{j}}\neq 0$. Apply the almost $(S,\delta)$-unit equation in Corollary\ref{unieqc}, we obtain that, for each summand, there are only finitely many solutions. 

Without loss of generality, let us assume $m\geq n$, then for any summand, there exist constants $a_i,b_i$ such that
\begin{align}\label{sizeT}\begin{aligned}c_{i,j,t}T^{t}\prod_{i,j}x_{r_{i}}^{p_{r_{i}}}y_{r_{j}}^{q_{r_{j}}}&=c_{i,j,t}T^{t}\exp(a_{i}m+b_{i}n)\\
&=c_{i,j,t}T^{t}\exp \left(\dfrac{b_{i}}{t_{w_{0},i_{0}}}a_{i}m+b_{i}n+(a_{i}-\dfrac{b_{i}}{t_{w_{0},i_{0}}})m\right)\\
&=c_{i,j,t}T^{t}\exp\left(\dfrac{b_{i}}{t_{w_{0},i_{0}}}T+(a_{i}-\dfrac{b_{i}}{t_{w_{0},i_{0}}})m\right).
\end{aligned}\end{align}
Now since $T \ll \max\{\log m,\log n\}$, then we know for sufficiently large $T$, $$\dfrac{b_{i}}{t_{w_{0},i_{0}}}T+(a_{i}-\dfrac{b_{i}}{t_{w_{0},i_{0}}})m$$ 
never vanish. Hence, \eqref{sizeT} tends to $\infty$ and contradicts the finiteness. Hence, $T$ can only have finitely values and this implies that $(m,n)$ satisfies finitely many linear relations.

\end{proof}

\textbf{Acknowledgments.} The author would like to thank Aaron Levin for many helpful comments on several proofs of the main theorem as well as helping polish this paper. The author would like to thank Nathan Grieve and Joseph Silverman for advises on writing and the reference list. The author also would like to thank the anonymous referee for suggestions on possible improvements.

\bibliographystyle{amsalpha}
\bibliography{GREATEST_COMMON_DIVISORS_FOR_POLYNOMIALS_IN_ALMOST_UNITS_AND_APPLICATIONS_TO_LINEAR_RECURRENCE_SEQUENCES}

\Addresses
\end{document}